\newcommand{\R}{\mathbf{R}}
\newcommand{\C}{\mathbf{C}}
\newcommand{\X}{\mathcal{X}}
\newcommand{\Z}{\mathcal{Z}}
\newcommand{\N}{\mathbb{N}}
\newcommand{\RR}{{\mathbf R}}
\newcommand{\NN}{{\mathbb N}}
\newcommand{\SSS}{{\mathbf S}}
\newcommand{\FF}{{\mathbb F}}
\newcommand{\realp}{\operatorname{Re}}
\newcommand{\cA}{\mathcal{A}}
\newcommand{\cB}{\mathcal{B}}
\newcommand{\cD}{\mathcal{D}}
\newcommand{\cF}{\mathcal{F}}
\newcommand{\cG}{\mathcal{G}}
\newcommand{\cI}{\mathcal{I}}
\newcommand{\cK}{\mathcal{K}}
\newcommand{\cL}{\mathcal{L}}
\newcommand{\cS}{\mathcal{S}}
\newcommand{\cX}{\mathcal{X}}
\newcommand{\cZ}{\mathcal{Z}}
\newcommand{\cU}{\mathcal{U}}
\newcommand{\cV}{\mathcal{V}}
\newcommand{\cW}{\mathcal{W}}
\newcommand{\xs}{x^\star}
\newcommand{\range}{\operatorname{range}}
\newcommand{\Expect}{\operatorname{\mathbb E}}
\newcommand{\EE}{\operatorname{\mathbb E}}
\newcommand{\PP}{\operatorname{\mathbb P}}
\newcommand{\D}{\mathcal{D}}
\newcommand{\argmin}{\operatornamewithlimits{arg\,min}}
\newcommand{\interior}{\operatorname{int}}
\newcommand{\diam}{\operatorname{diam}}
\newcommand{\proj}{{\rm proj}}
\def\shortdisplay{\setlength{\abovedisplayskip}{5pt}\setlength{\belowdisplayskip}{5pt}\setlength{\abovedisplayshortskip}{2pt}\setlength{\belowdisplayshortskip}{2pt}}
\let\oldselectfont\selectfont
\def\selectfont{\oldselectfont\shortdisplay}
\begin{document}

\tolerance 1414
\hbadness 1414
\emergencystretch 1.5em
\hfuzz 0.3pt
\widowpenalty=10000
\vfuzz \hfuzz
\raggedbottom

\title{Stochastic Approximation with Decision-Dependent Distributions: Asymptotic Normality and Optimality}

\author{\name Joshua\ Cutler \email jocutler@uw.edu\\
	\addr Department of Mathematics\\
	University of Washington\\
	Seattle, WA 98195, USA
	\AND
	\name Mateo\ D\'iaz \email mateodd@jhu.edu\\
	\addr Department of Applied Mathematics and Statistics\\
	Johns Hopkins University\\
	Baltimore, MD 21218, USA
	\AND
	\name Dmitriy\ Drusvyatskiy \email ddruv@uw.edu\\
	\addr Department of Mathematics\\
	University of Washington\\
	Seattle, WA 98195, USA
	\AND}

\editor{Francis Bach}

\maketitle

\begin{abstract}We analyze a stochastic approximation algorithm for decision-dependent problems, wherein the data distribution used by the algorithm evolves along the iterate sequence. The primary examples of such problems appear in performative prediction and its multiplayer extensions. We show that under mild assumptions, the deviation between the average iterate of the algorithm and the solution is asymptotically normal, with a covariance that clearly decouples the effects of the gradient noise and the distributional shift. Moreover, building on the work of H\'ajek and Le Cam, we show that the asymptotic performance of the algorithm with averaging is locally minimax optimal.
\end{abstract}

\begin{keywords}
	stochastic approximation, decision-dependent distributions, performative prediction, asymptotic normality, local asymptotic minimax optimality
\end{keywords}
\section{Introduction}
\label{sec:intro}

The primary role of stochastic optimization in data science is to find a learning rule (e.g., a classifier) from a limited data sample which enables accurate prediction on unseen data. Classical theory crucially relies on the assumption that both the observed data and the unseen data are generated by the same distribution. Recent literature on strategic classification \citep{hardt2016strategic} and performative prediction \citep{perdomo2020performative}, however, has highlighted a variety of contemporary settings where this assumption is grossly violated. One common reason is that the data seen by a learning system may depend on or react to a deployed learning rule. For example, members of the population may alter their features in response to a deployed classifier in order to increase their likelihood of being positively labeled---a phenomenon called gaming. Even when
the population is agnostic to the learning rule, the decisions made by the learning system (e.g., loan
approval) may inadvertently alter the profile of the population (e.g., credit score). The goal of the
learning system therefore is to find a classifier that generalizes well under the response distribution. The situation may be further compounded by a population that reacts to multiple competing learners simultaneously \citep{narang2022multiplayer,wood2022stochastic,piliouras2022multi}.

In this work, we model decision-dependent problems using variational inequalities.
Namely, let $G(x,z)$ be a map that  depends on the decision $x$ and data $z$, and let the set $\mathcal{X}$ of feasible decisions be closed and convex. A variety of classical learning problems can be posed as solving the variational inequality
\begin{equation}\label{eqn:VI0}
	0\in \mathop{\EE}_{z\sim \mathcal{P}}G(x,z)+N_{\mathcal{X}}(x),\tag*{VI($\mathcal{P}$)}
\end{equation}
where $\mathcal{P}$ is some fixed distribution and $N_{\mathcal{X}}(x)=\{v\in\R^d \mid \langle v, y-x \rangle \leq 0 ~\text{for all}~ y\in\cX\}$ is the normal cone to $\mathcal{X}$ at $x\in\cX$. Two examples are worth keeping in mind: $(i)$ standard problems of supervised learning amount to $G(x,z)=\nabla_x \ell(x,z)$ being the gradient of some loss function to be minimized over $\cX$, and $(ii)$ stochastic games correspond to $G(x,z)$ being a stacked gradient of the players' individual losses. In both of these examples, \ref{eqn:VI0} encodes the standard first-order optimality conditions. The benefit of variational inequalities is that they yield a single framework for analyzing a wide range of learning problems, notably in optimization and game theory. We refer the interested reader to \citet{kinderlehrer2000introduction} and \citet{dontchev2009implicit} for a historical perspective and further details on the use of variational inequalities in applications.

Following the recent literature on performative prediction \citep{hardt2016strategic,perdomo2020performative,narang2022multiplayer}, we will be interested in settings where the distribution $\mathcal{P}$ is not fixed but rather varies with $x$. With this in mind, let $\mathcal{D}(x)$ be a family of distributions indexed by  $x\in \mathcal{X}$. The interpretation is that $\mathcal{D}(x)$ is the response of the population to a newly deployed learning rule $x$. We posit that the goal of a learning system is to find a point $x^{\star}$ so that
$x=x^{\star}$ solves the variational inequality VI($\mathcal{D}(x^{\star})$), or equivalently:
$$0\in \mathop{\EE}_{z\sim \mathcal{D}(x^{\star})} G(x^{\star},z)+N_{\mathcal{X}}(x^{\star}).$$
We will say that such points $x^{\star}$ are at {\em equilibrium}.
In words, a learning system that deploys an equilibrium point $x^{\star}$ has no incentive to deviate from $x^{\star}$ based only on the solution of the variational inequality VI($\mathcal{D}(x^{\star})$) induced by the response distribution $\mathcal{D}(x^{\star})$. The setting of performative prediction \citep{perdomo2020performative} corresponds to the  choice $G(x,z)=\nabla_x \ell(x,z)$ for some loss function $\ell$.\footnote{In the language of \citet{perdomo2020performative}, equilibria coincide with performatively stable points.} More generally, decision-dependent games, proposed by  \citet{narang2022multiplayer}, \citet{piliouras2022multi}, and \citet{wood2022stochastic}, correspond to the choice $G(x,z)=\big(\nabla_1 \ell_1(x,z),\ldots, \nabla_k \ell_k(x,z)\big)$ where $\nabla_i \ell_i(x,z)$ is the gradient of the $i$'th player's loss with respect to their decision $x_i$ and $\mathcal{D}(x)=\mathcal{D}_1(x)\times \cdots \times\mathcal{D}_k(x)$ is a product distribution.  The specifics of these two examples will not affect our results, and therefore we work with general maps $G(x,z)$.

Following the prevalent viewpoint in machine learning, we suppose that the only access to the data distributions $\mathcal{D}(x)$ is by drawing samples $z\sim \mathcal{D}(x)$. With this in mind, a natural algorithm for finding an equilibrium point $x^{\star}$ is the {\em stochastic forward-backward algorithm}:
\begin{equation}\label{eqn:VI_iteration}
	\begin{aligned}
		 & \textrm{Sample}~z_t\sim \mathcal{D}(x_t)                         \\
		 & \textrm{Set}~x_{t+1}=\proj_{\cX}\big(x_t-\eta_t G(x_t,z_t)\big),
	\end{aligned}\tag*{SFB}
\end{equation}
where $\proj_{\cX}$ is the nearest-point projection onto $\cX$. Specializing to performative prediction \citep{mendler2020stochastic} and its multiplayer extension \citep{narang2022multiplayer}, this algorithm reduces to a basic projected stochastic gradient iteration. The contribution of our paper can be informally summarized as follows.

\begin{center}
	\fbox{We show that averaged \ref{eqn:VI_iteration} is asymptotically optimal for finding equilibrium points.}
\end{center}
In particular, our results imply asymptotic optimality of the basic stochastic gradient methods for both single player and multiplayer performative prediction.

\subsection{Summary of Main Results}

Arguing optimality of an algorithm is a two-step process: $(i)$ estimate the performance of the specific algorithm and $(ii)$ derive a matching lower bound that is valid among all relevant estimation procedures. Beginning with the former, we build on the seminal work of \citet{polyak92}, wherein a central limit theorem is established for stochastic approximation algorithms for solving smooth equations. Letting $\bar x_{t} = \frac{1}{t}\sum_{i=1}^{t}x_{i}$ denote the running average of the SFB iterates, we show that the deviation $\sqrt{t}(\bar x_{t}-\xs)$ is asymptotically normal with an appealingly simple covariance. See Figure~\ref{fig:ex1} for an illustration.\footnote{Visit \url{https://github.com/mateodd25/Asymptotic-normality-in-performative-prediction} for code.}

\begin{theorem}[Asymptotic normality, informal; see Theorem~\ref{thm:anperf}]\label{thm:main_informal}
	Suppose that $G(\cdot,z)$ is $\alpha$-strongly monotone and Lipschitz continuous on $\cX$, $G(x,\cdot)$ is $\beta$-Lipschitz continuous on $\cZ$, and the distribution map $\mathcal{D}(\cdot)$ is $\gamma$-Lipschitz continuous on $\cX$ with respect to the Wasserstein-1 distance. Suppose moreover that $x^{\star}$ lies in the interior of $\mathcal{X}$ and $\eta_{t} \propto t^{-\nu}$ for some $\nu\in\big(\tfrac{1}{2}, 1\big)$. Then in the regime $\frac{\gamma\beta}{\alpha}<1$, the  \ref{eqn:VI_iteration} iterates $x_t$ converge to the equilibrium point $x^\star$ almost surely, and the averaged SFB iterates $\bar x_{t} = \frac{1}{t}\sum_{i=1}^{t}x_{i}$ satisfy
	\begin{equation*}
		\sqrt{t}(\bar{x}_t - x^{\star})\leadsto\mathsf{N}\big(0,W^{-1} \Sigma W^{-\top}\big),
	\end{equation*}
	where
	\begin{align*}
		\Sigma = \underset{z\sim\D(\xs)}{\Expect}\big[G(\xs, z)G(\xs, z)^\top\big]\quad \text{and}\quad W=\underbrace{\underset{z\sim\D(x^{\star})}{\Expect}[\nabla_x G(x^{\star},z)]}_\text{static}+ \underbrace{\frac{d}{dy} \underset{z\sim\D(y)}{\Expect}[G(\xs, z)]\Big|_{y=x^{\star}}}_\text{dynamic}\!.
	\end{align*}
\end{theorem}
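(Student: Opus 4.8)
The plan is to cast the iteration \eqref{eqn:VI_iteration} as a classical Robbins--Monro recursion driven by a \emph{decision-dependent} field and then invoke a Polyak--Juditsky-type averaging theorem. Concretely, define the effective vector field
\[
F(x) := \underset{z\sim\D(x)}{\Expect}\big[G(x,z)\big],
\]
so that $x^{\star}$ is characterized by $0 \in F(x^{\star}) + N_{\cX}(x^{\star})$, and since $x^{\star}$ lies in the interior of $\cX$ this reduces to $F(x^{\star})=0$. The iteration then reads $x_{t+1} = \proj_{\cX}(x_t - \eta_t F(x_t) - \eta_t \xi_{t+1})$, where the martingale-difference noise is $\xi_{t+1} := G(x_t,z_t) - F(x_t)$ with respect to the natural filtration. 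The first block of the argument is to establish almost-sure convergence $x_t \to x^{\star}$: here the key structural fact is that the condition $\gamma\beta/\alpha < 1$ forces $F$ to be strongly monotone near $x^{\star}$ (this should already be available from the earlier sections, since strong monotonicity of $G(\cdot,z)$ plus the $\gamma$-Lipschitz, $\beta$-Lipschitz distributional sensitivity yields $\langle F(x)-F(x'), x-x'\rangle \geq (\alpha - \gamma\beta)\|x-x'\|^2$), so one can run a standard Robbins--Siegmund supermartingale argument with the Lyapunov function $\|x_t - x^{\star}\|^2$, using $\sum \eta_t = \infty$ and $\sum \eta_t^2 < \infty$ (guaranteed by $\nu \in (1/2,1)$) together with a bound on $\Expect\|\xi_{t+1}\|^2$ controlled by $\|x_t - x^{\star}\|^2 + $ const.

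The second and main block is the asymptotic normality of the average. The strategy is to linearize $F$ at $x^{\star}$: write $F(x) = W(x - x^{\star}) + r(x)$ where $W = \nabla F(x^{\star})$ is exactly the Jacobian appearing in the theorem — by the chain rule, differentiating $x \mapsto \Expect_{z\sim\D(x)} G(x,z)$ splits into the ``static'' term $\Expect_{z\sim\D(x^{\star})}\nabla_x G(x^{\star},z)$ (differentiating the integrand) and the ``dynamic'' term $\frac{d}{dy}\Expect_{z\sim\D(y)} G(x^{\star},z)\big|_{y=x^{\star}}$ (differentiating through the measure), and the remainder satisfies $\|r(x)\| = o(\|x-x^{\star}\|)$. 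One also notes $W$ is invertible since its symmetric part is at least $(\alpha-\gamma\beta)\id \succ 0$. Now the iteration, localized near $x^{\star}$ and ignoring the projection (valid eventually since $x^{\star}$ is interior and $x_t \to x^{\star}$ a.s.), is precisely of the Polyak--Juditsky form
\[
x_{t+1} - x^{\star} = (x_t - x^{\star}) - \eta_t W (x_t - x^{\star}) - \eta_t \xi_{t+1} - \eta_t r(x_t).
\]
Applying the averaging theorem of Polyak--Juditsky (or its restatement in the earlier part of this paper) with step size $\eta_t = \eta_0 t^{-\nu}$, $\nu \in (1/2,1)$, yields
\[
\sqrt{t}(\bar x_t - x^{\star}) \xlongrightarrow{d} \mathsf{N}\big(0, W^{-1} S W^{-\top}\big),
\]
where $S = \lim_{t\to\infty} \Expect[\xi_{t+1}\xi_{t+1}^{\top} \mid \cF_t]$ is the limiting conditional noise covariance. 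The final bookkeeping step is to identify $S = \Sigma$: since $x_t \to x^{\star}$ and $G(x,\cdot)$, $\D(\cdot)$ are continuous in the appropriate senses, the conditional covariance $\Expect[(G(x_t,z_t)-F(x_t))(G(x_t,z_t)-F(x_t))^{\top}\mid \cF_t]$ converges to $\Expect_{z\sim\D(x^{\star})}[G(x^{\star},z)G(x^{\star},z)^{\top}] - F(x^{\star})F(x^{\star})^{\top} = \Sigma$, the last equality because $F(x^{\star}) = 0$.

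The step I expect to be the main obstacle is making the localization and remainder control fully rigorous in the decision-dependent setting. Two subtleties arise: first, verifying that the remainder $r(x_t)$ is genuinely negligible requires differentiability of the \emph{map} $x \mapsto \Expect_{z\sim\D(x)} G(x,z)$ with a quantitative Taylor estimate — this is delicate because it mixes smoothness of $G$ in its first argument with smoothness of the measure-valued map $\D(\cdot)$; one needs to upgrade the $\gamma$-Lipschitz-in-Wasserstein assumption to a differentiability statement at $x^{\star}$ (presumably an explicit assumption in the formal version of the theorem, which the ``Informal'' label here elides). Second, the noise $\xi_{t+1}$ is not i.i.d. — its conditional law depends on $x_t$ through $\D(x_t)$ — so the martingale CLT / Polyak--Juditsky machinery must be applied in the form that only requires (a) the conditional covariance to converge, (b) a conditional Lindeberg / uniform integrability condition (obtained from the $\beta$-Lipschitz continuity of $G(x,\cdot)$ and a uniform moment bound), and (c) $x_t \to x^{\star}$ a.s. to pin down the limit. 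Everything else — the supermartingale convergence, the invertibility of $W$, the identification of $\Sigma$ — is routine once the strong-monotonicity-under-$\gamma\beta/\alpha<1$ estimate and the Jacobian decomposition are in hand.
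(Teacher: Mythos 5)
Your proposal follows essentially the same route as the paper: recast the iteration as a Robbins--Monro scheme for the zero of $R(x)=\Expect_{z\sim\D(x)}G(x,z)$, prove almost-sure convergence via a Robbins--Siegmund argument using the $(\alpha-\gamma\beta)$-strong-monotonicity induced by $\gamma\beta/\alpha<1$, handle the projection by interiority, establish the static-plus-dynamic Jacobian decomposition and its invertibility, show the conditional noise covariance converges to $\Sigma$, and invoke the Polyak--Juditsky averaging theorem. The subtleties you flag (joint differentiability of $x\mapsto\Expect_{z\sim\D(x)}G(x,z)$ and the non-i.i.d.\ state-dependent noise requiring converging conditional covariance plus uniform integrability) are precisely the additional hypotheses the formal statement imposes, so your outline matches the paper's proof.
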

A few comments are in order. First, the conditions on the data of the problem reduce to the standard assumptions in performative prediction \citep{perdomo2020performative} when $G(x, z) = \nabla_{x} \ell(x,z)$. In particular, $G(\cdot, z)$ being $\alpha$-strongly monotone and Lipschitz is then equivalent to the function $\ell(\cdot,z)$ being $\alpha$-strongly convex and smooth with Lipschitz continuous gradient. The strong monotonicity requirement can be loosened to hold only in expectation; see Theorem~\ref{thm:anperf} for the formal statement. Second, the regime $\frac{\gamma\beta}{\alpha}<1$ is, in essence, optimal because otherwise equilibrium points may even fail to exist. Third, the effect of the distributional shift on the asymptotic covariance is entirely captured by the second ``dynamic'' term in $W$. Indeed, when this term is absent, the product $W^{-1}\Sigma W^{-\top}$ is precisely the asymptotic covariance of the stochastic forward-backward algorithm applied to the static problem  VI($\mathcal{D}(x^{\star})$)  at equilibrium.\footnote{Of course, this analogy is entirely conceptual, since $\mathcal{D}(x^{\star})$ is unknown a priori.} The proof of Theorem~\ref{thm:main_informal} follows by interpreting \ref{eqn:VI_iteration} as a stochastic approximation algorithm for finding the zero of the nonlinear map $R(x)=\EE_{z\sim\D(x)}[G(x,z)]$ and then applying a variation of the classical asymptotic normality result of \citet[Theorem~2]{polyak92}.

\begin{figure}[t]
	\centering
	\begin{subfigure}[b]{0.3\textwidth}
		\centering
		\includegraphics[width=0.95\textwidth]{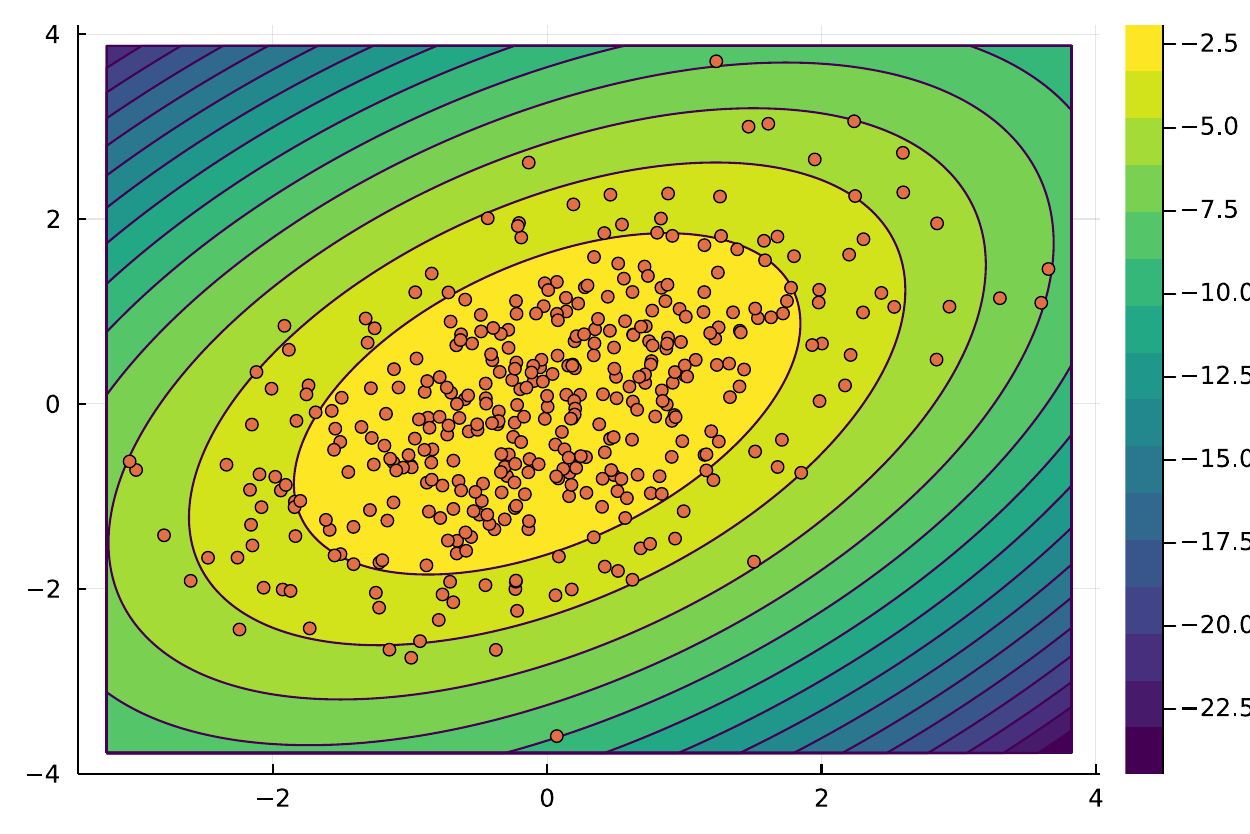}
		\includegraphics[width=0.95\textwidth]{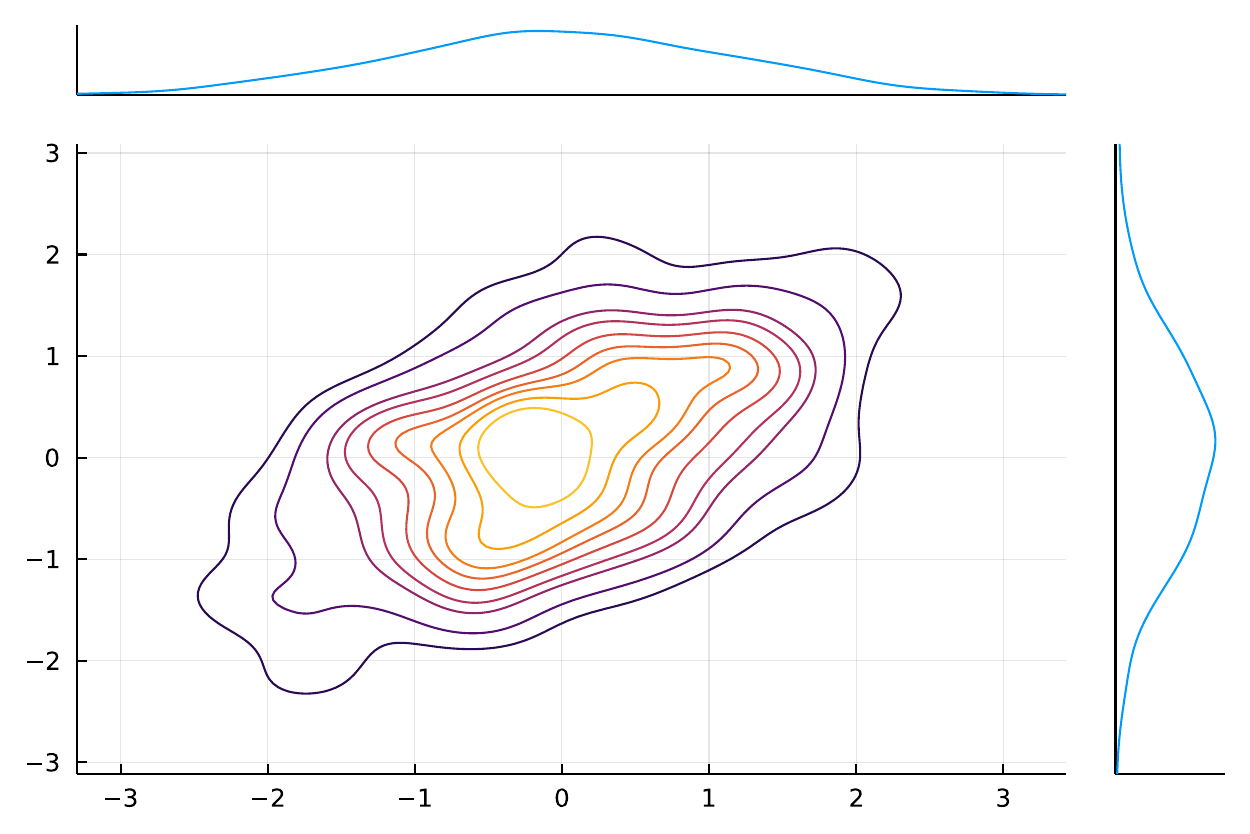}
		\includegraphics[width=0.95\textwidth]{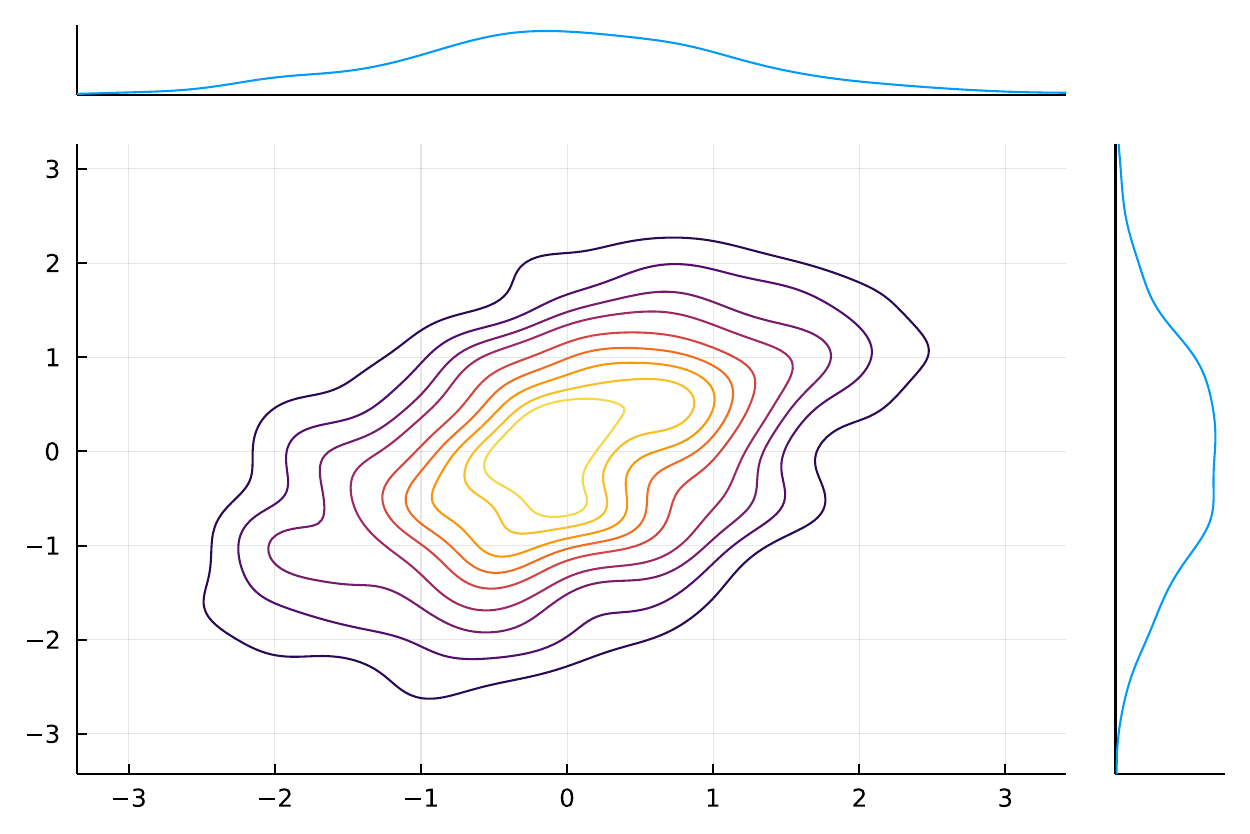}
		\caption{$\rho = 0.25$}\label{fig:quadratic}
	\end{subfigure}
	\hfill
	\begin{subfigure}[b]{0.3\textwidth}
		\centering
		\includegraphics[width=0.95\textwidth]{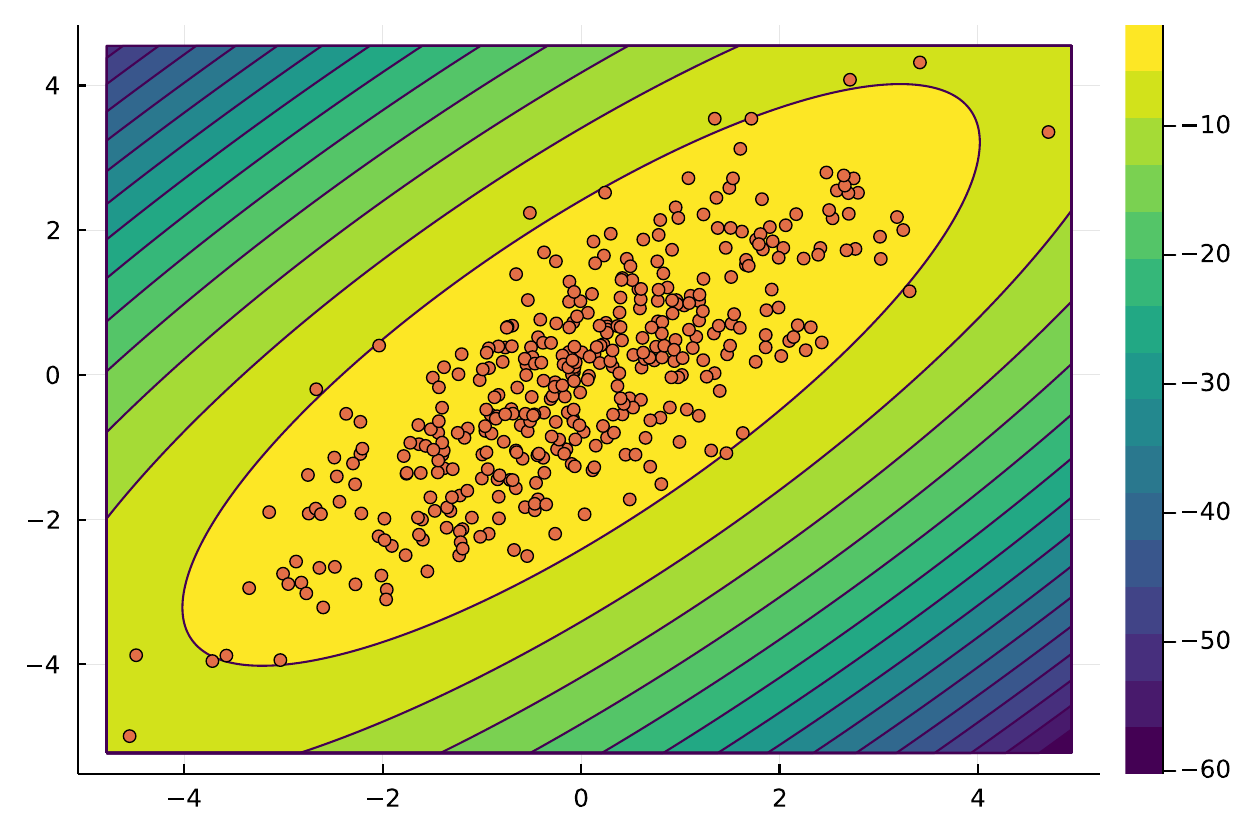}
		\includegraphics[width=0.95\textwidth]{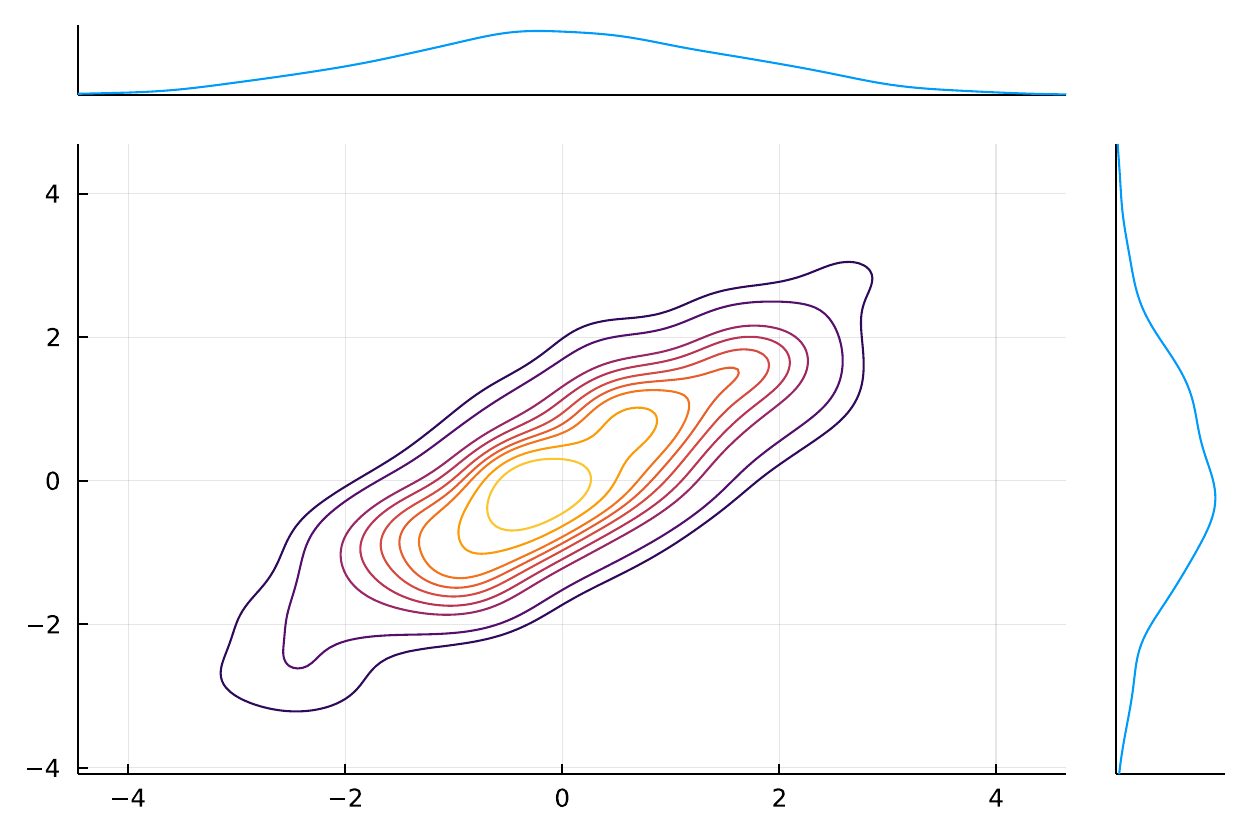}
		\includegraphics[width=0.95\textwidth]{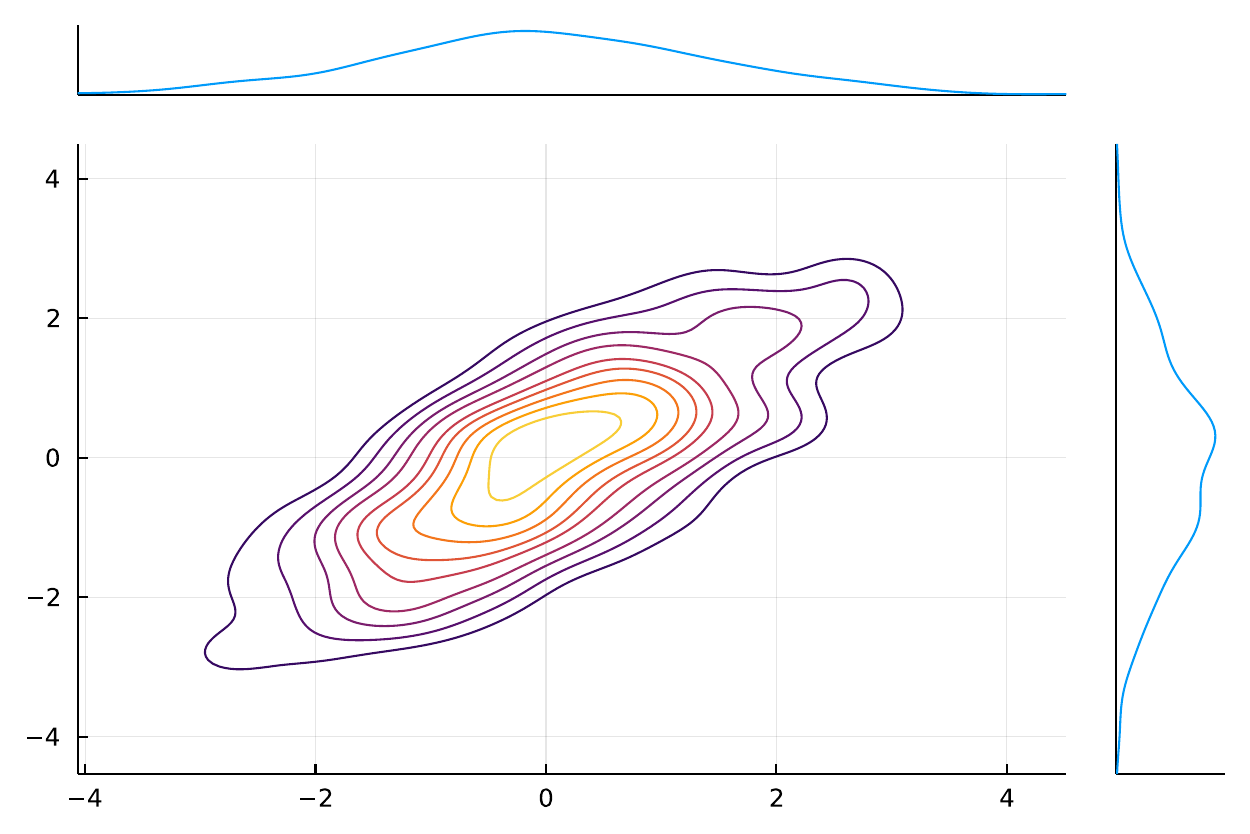}
		\caption{$\rho = 0.5$}\label{fig:quadratic}
	\end{subfigure}
	\hfill
	\begin{subfigure}[b]{0.3\textwidth}
		\centering
		\includegraphics[width=0.95\textwidth]{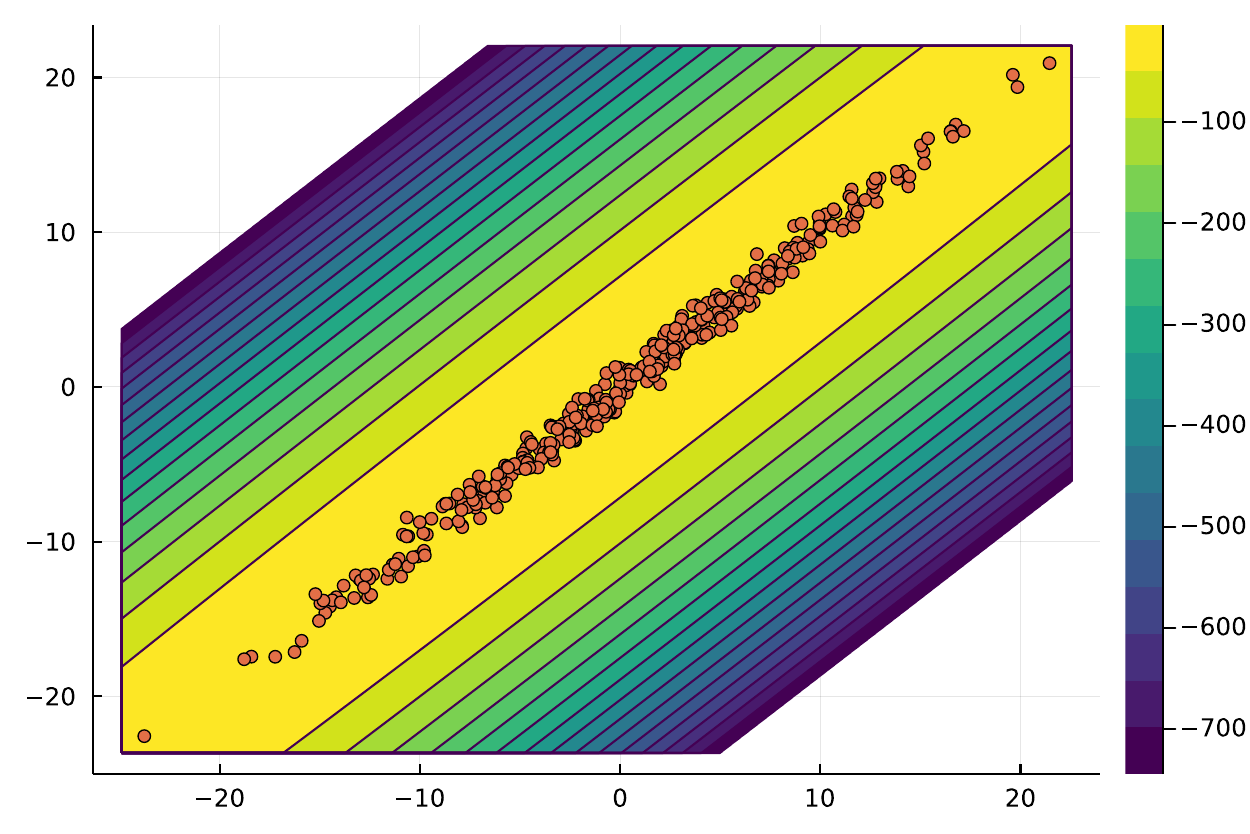}
		\includegraphics[width=0.95\textwidth]{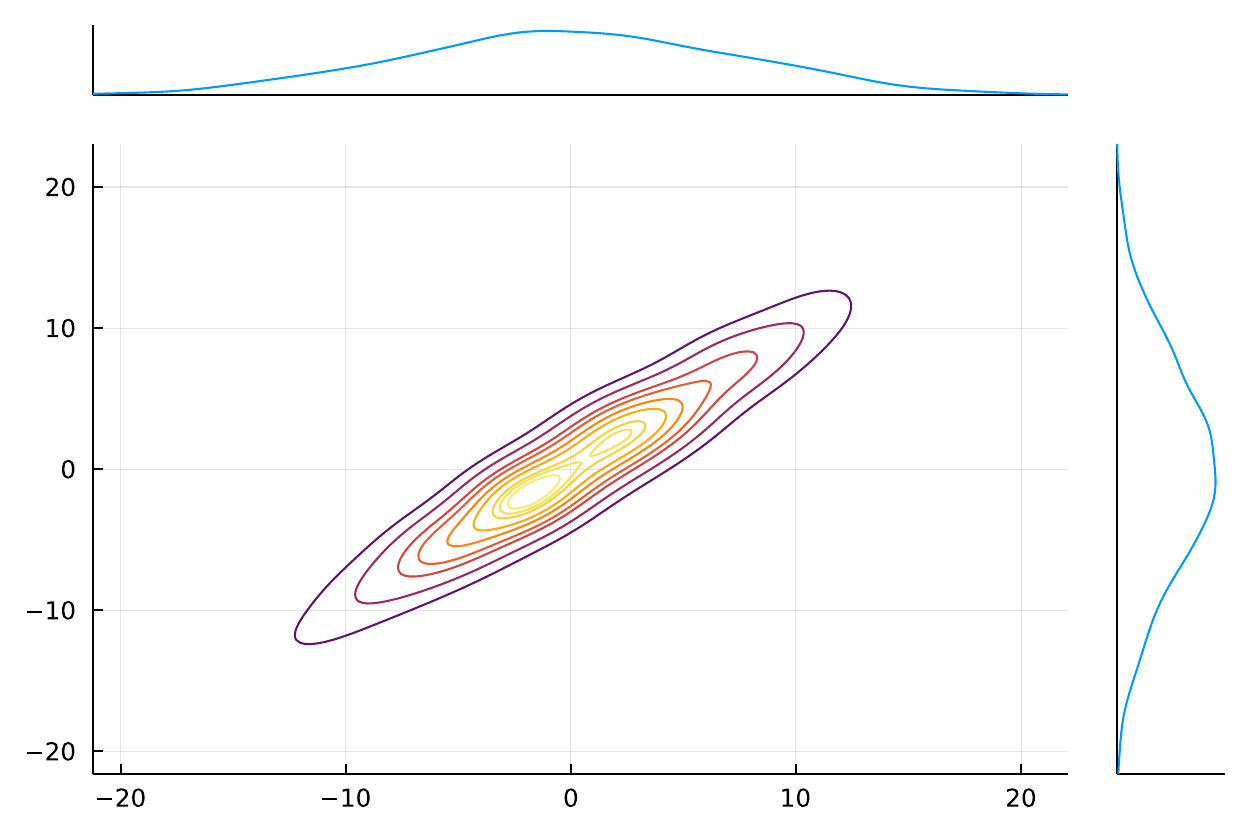}
		\includegraphics[width=0.95\textwidth]{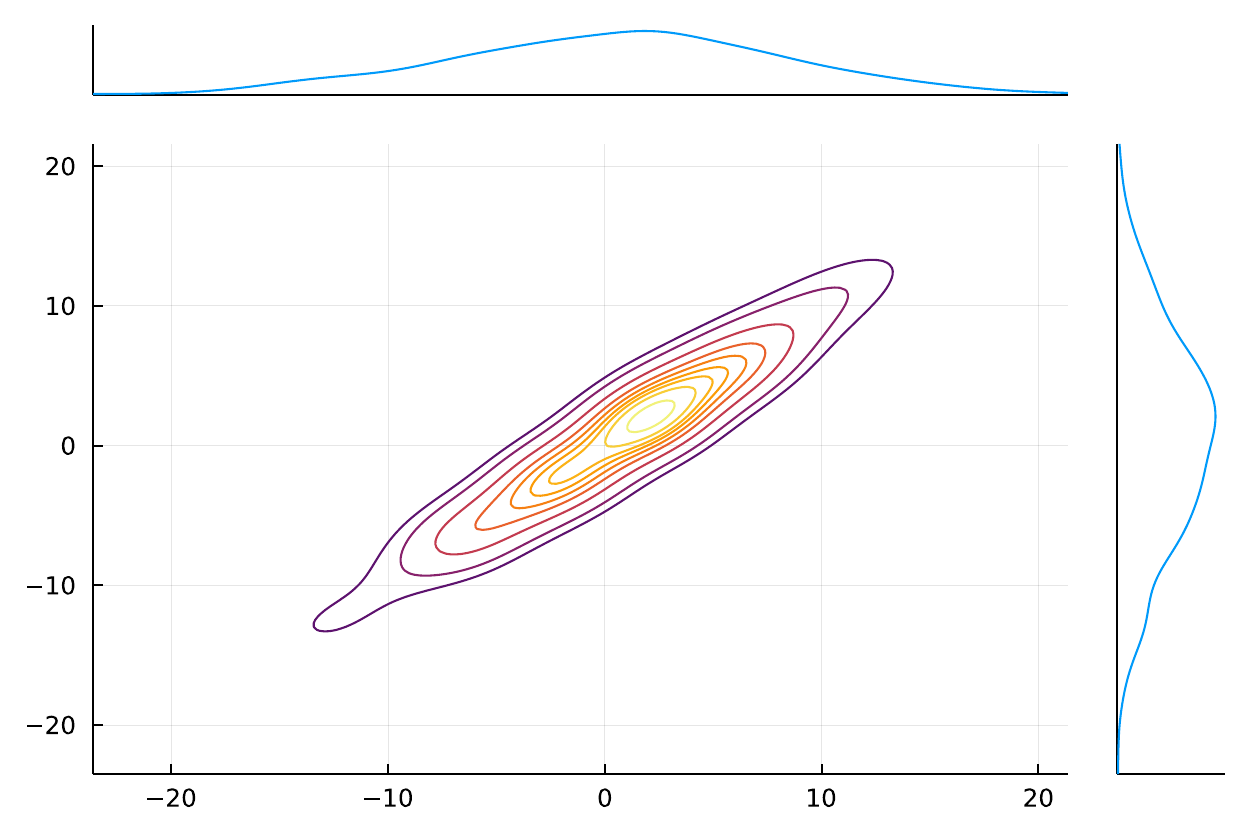}
		\caption{$\rho = 0.9$}\label{fig:landscape}
	\end{subfigure}
	\caption{Consider the problem corresponding to $G(x, z) = \nabla_x \ell (x, z)$ with $\ell(x, z) = \frac{1}{2} \|x-z\|^{2}$ and $\cD(x_{1}, x_{2}) = \mathsf{N}(\rho (x_{2}, x_{1}), I_2)$. A simple computation shows $\Sigma = I_{2}$ and
	$W = [1,-\rho; -\rho ,1].$
	As $\rho$ approaches one, $W^{-1}$ becomes ill conditioned. We run algorithm~\ref{eqn:VI_iteration} $400$ times using $\eta_{t} = t^{-3/4}$ for $10^6$ iterations.
	The first row depicts the resulting average iterates laid over the confidence regions (plotted in logarithmic scale) corresponding to the asymptotic normal distribution. The next two rows depict kernel density estimates from the asymptotic normal distribution (top) and the deviation $\sqrt{k} (\bar x_{k} - \xs)$ (bottom). }\label{fig:ex1}
\end{figure}

A reasonable question to ask is whether there exists an algorithm with better asymptotic guarantees than those of the stochastic forward-backward algorithm (with averaging). We will show that in a strong sense, the answer is no; averaged~\ref{eqn:VI_iteration} is asymptotically optimal. In particular, we will obtain an optimal bound on the performance of {\em any} estimation procedure for finding the equilibrium point along an adversarially-chosen sequence of small perturbations of the target problem.  The end result is summarized informally as follows.
\begin{theorem}[Asymptotic optimality, informal; see Theorem~\ref{thm:optimality}]\label{thm:lower_inform}
  Suppose the same setting as in Theorem~\ref{thm:main_informal} and let $\cL\colon \RR^{d} \rightarrow [0,\infty)$ be any symmetric, quasiconvex, lower semicontinuous loss functional. Fix any procedure for finding equilibrium points that outputs an estimator $\widehat x_k$ based on $k$ observed samples. As $k\to \infty$, there is a sequence of perturbed distribution maps $\mathcal{D}_k$ converging to  $\mathcal{D}$, along with corresponding equilibrium points $x_k^{\star}$ converging to $x^{\star}$, such that the following hold.
  \begin{enumerate}[label=(\roman*)] \item (\textbf{Lower bound}) \label{item:lowerbound_informal} The expected error $\mathbb{E}[\cL(\sqrt{k}(\widehat x_k-x_{k}^{\star}))]$ of the estimator $\widehat x_k$ on the perturbed problem is asymptotically lower-bounded by $\mathbb{E}[\cL(Z)]$, where $Z\sim \mathsf{N}(0, W^{-1} \Sigma W^{-\top})$.
	\item (\textbf{Tightness of SFB}) Moreover, if $\cL$ is bounded and continuous, then the lower bound in \ref{item:lowerbound_informal} is achieved by the estimator given by the averaged \ref{eqn:VI_iteration} iterate $\bar x_{k} = \frac{1}{k}\sum_{i=1}^{k}x_{i}$.
		  \end{enumerate}
\end{theorem}

The formal statement of the theorem and its proof follow closely the classical work of H\'{a}jek and Le Cam \citep{le2000asymptotics,van2000asymptotic} on statistical lower bounds and the more recent work of \citet{duchi2021asymptotic} on asymptotic optimality of the stochastic gradient method. In particular, the fundamental role of tilt-stability and the inverse function theorem highlighted by \cite{duchi2021asymptotic}  is replaced by the implicit function theorem paradigm.

Taken together, Theorems~\ref{thm:main_informal} and~\ref{thm:lower_inform} provide a solid theoretical footing for the practical application of \ref{eqn:VI_iteration}, which generalizes stochastic gradient descent. 
These results precisely quantify the asymptotic uncertainty of SFB, with confidence regions that are optimally narrow (in an appropriate sense) among all methods for finding equilibrium points. In particular, algorithms that use momentum or try to learn and adapt to how the distributions vary cannot achieve better asymptotic performance. Thus, stronger modeling assumptions are necessary to develop algorithms with provably superior asymptotic sample efficiency. 
We also note that all results in the paper extend directly to a minibatch variant of SFB, where in each iteration the update direction $G(x_t,z_t)$ is replaced by the  empirical average $\frac{1}{m}\sum_{i=1}^m G(x_t,z_{t,i})$ with $(z_{t,1}, \dots, z_{t,m})$ sampled i.i.d.\ from $\mathcal{D}(x_t)$. The only effect of the batching is that the asymptotic covariance $\Sigma$ is rescaled by $1/m$ in all results.

Before continuing, it is important to highlight a limitation of our results. In order to generate a sample $z_t\sim\mathcal{D}(x_t)$ in practice, one must first deploy the learning rule $x_t$ and then wait for the population to adapt. Consequently, the sampling and deployment have different associated ``costs.'' Our results can somewhat adapt to this imbalance by using minibatches, as explained above. Nonetheless, a more nuanced approach that balances sample complexity against the deployment cost is worth investigating in future work.

\subsection{Related Work}
Our work builds on existing literature in machine learning and stochastic optimization.

\paragraph{Learning with decision-dependent distributions.}
The basic setup for decision-dependent problems that we use is inspired by the performative prediction framework of \cite{perdomo2020performative} and its multiplayer extension developed independently by \citet{narang2022multiplayer}, \citet{piliouras2022multi}, and \citet{wood2022stochastic}. The stochastic gradient method for performative prediction was first introduced and analyzed by \cite{mendler2020stochastic}, while the stochastic forward-backward method for games was analyzed by \cite{narang2022multiplayer}. The related work of \cite{drusvyatskiy2020stochastic} showed that a variety of popular gradient-based algorithms for performative prediction can be understood
as the analogous algorithms applied to a certain static problem corrupted by a vanishing bias. In
general, performatively stable points (equilibria) are not  ``performatively optimal'' in the sense of \cite{perdomo2020performative}. Seeking to develop algorithms
for finding performatively optimal points, the work of \cite{miller2021outside} provides sufficient conditions for the prediction problem to be convex; extensions of such conditions to games appear in the papers of \cite{narang2022multiplayer} and \cite{wood2022stochastic}.
Algorithms for finding performatively optimal points under a variety of different assumptions and oracle models appear in the works of \cite{izzo2021learn}, \cite{jagadeesan2022regret}, \cite{miller2021outside}, \cite{narang2022multiplayer}, and \cite{wood2022stochastic}. The performative prediction framework is largely motivated by the problem of strategic classification \citep{hardt2016strategic}, which has been studied extensively from the perspective of causal inference \citep{bechavod2020causal, miller2020strategic} and convex optimization \citep{dong2018strategic}. Other lines of work \citep{brown2020performative, cutler2023distdrift, ray2022decision, wood2021online} in performative prediction have focused on the setting in which the environment evolves dynamically in time.

\paragraph{Stochastic approximation.}
There is extensive literature on stochastic approximation. The most relevant results for us are those of \cite{polyak92} that quantify the limiting distribution of the average iterate of stochastic approximation algorithms. Stochastic optimization problems with decision-dependent uncertainties have appeared in the classical stochastic programming literature; see, e.g., the works of \cite{ahmed2000strategic}, \cite{dupacova2006optimization}, \cite{jonsbraaten1998class}, \cite{rubinstein1993sensitivity}, and \cite{varaiya1988stochastic}.
We refer the reader to the recent paper of \cite{hellemo2018decision}, which discusses taxonomy and
various models of decision-dependent uncertainties. An important theme of these works is to utilize
structural assumptions on how the decision variables impact the distributions. In contrast, much of the work on performative prediction \citep{perdomo2020performative,narang2022multiplayer,wood2022stochastic,piliouras2022multi,drusvyatskiy2020stochastic,mendler2020stochastic} and our current paper are ``model-free.''

\paragraph{Local minimax lower bounds in estimation.}
There is a rich literature on minimax lower bounds in statistical estimation problems; we refer the reader to \citet[Chapter 15]{wainwright2019high} for a detailed treatment. Typical results of this type lower-bound the performance of any statistical procedure on a worst-case instance of that procedure. Minimax lower bounds can be quite loose as they do not consider the complexity of the particular problem that one is trying to solve but rather that of an entire problem class to which it belongs. More precise local minimax lower bounds, as developed by H\'ajek and Le Cam \citep{le2000asymptotics,van2000asymptotic}, provide much finer problem-specific guarantees. Building on this framework, \citet{duchi2021asymptotic} showed that the stochastic gradient method for standard single-stage stochastic optimization problems is, in an appropriate sense, locally asymptotically minimax optimal. Our paper builds heavily on this line of work.

\subsection{Outline} The outline of the paper is as follows. Section~\ref{sec:bas_notation} records some basic notation that we will use. Section~\ref{sec:performative} formally introduces/reviews the decision-dependent framework. In Section~\ref{sec:normal}, we show that the running average of the stochastic forward-backward algorithm is asymptotically normal (Theorem~\ref{thm:main_informal}), and identify its asymptotic covariance. Finally, Section~\ref{sec:optimality} presents the local minimax lower bound (Theorem~\ref{thm:lower_inform}). We defer many of the technical proofs to the appendices.

\section{Notation and Definitions}\label{sec:bas_notation}

Throughout, we let $\R^d$ denote the standard $d$-dimensional Euclidean space equipped with the dot product $\langle x, y \rangle = x^{\top}y$ and the induced norm $\|x\|=\sqrt{\langle x,x\rangle}$. For any set $\mathcal{X}\subset\R^d$, the symbol $\proj_{\mathcal{X}}(x)$ will denote the set $\argmin_{y\in\cX}\|y-x\|$ of nearest points of $\cX$ to $x\in \R^d$. We say that a function $\mathcal{L}\colon \RR^{d} \rightarrow \RR$ is {\em symmetric} if it satisfies $\cL(x) = \cL(-x)$ for all $x\in\R^d$, and we say that $\cL$ is {\em quasiconvex} if its sublevel set $\{x \mid \cL(x) \leq c\}$ is convex for any $c \in \RR.$ For any matrix $A\in \RR^{m\times n}$, the symbols  $\|A\|_{\rm op}$ and $A^{\dagger}$ stand for the operator norm and Moore-Penrose pseudoinverse of $A$, respectively.
For any two symmetric matrices $A,B\in\RR^{n\times n}$, we write $A \succeq B$ if the matrix $A - B$ is positive semidefinite.

\paragraph{Strong monotonicity and smoothness.} A map $F\colon\cX\to\R^d$ is called {\em $\alpha$-strongly monotone} on  $\mathcal{X}\subset\R^d$ if $\alpha>0$ and
$$\langle F(x)-F(x'),x-x'\rangle\geq \alpha\|x-x'\|^2\qquad \textrm{for all }x,x'\in \cX.$$
If $F = \nabla f$ for some $C^1$-smooth function $f$, then $\alpha$-strong monotonicity of $F$ is equivalent to $\alpha$-strong convexity of $f.$
We say that a map $F\colon\cX\to\R^m$ is {\em smooth} on a set $\mathcal{X}\subset\R^d$ if $F$ has a differentiable extension on an open neighborhood of each point of $\mathcal{X}$; further, we say that $F$ is {\em $\beta$-smooth} on $\cX$ if the Jacobian of $F$ satisfies the Lipschitz condition
$$\|\nabla F(x)-\nabla F(x')\|_{\rm op}\leq \beta \|x-x'\|\qquad \textrm{for all }x,x'\in \cX.$$

\paragraph{Probability measures.}  Given a nonempty Polish metric space $\Z$ (i.e., separable and complete), we equip $\Z$ with its Borel $\sigma$-algebra $\cB(\Z)$ and let $P_{1}(\Z)$ denote the set of probability measures on $\Z$ with finite first moment. We will measure the deviation between two measures $\mu,\nu\in P_{1}(\Z)$ using the Wasserstein-1 distance:
\begin{equation}\label{eq:W1}
	W_1(\mu,\nu) = \sup_{\phi\in\text{Lip}_1(\Z)}\!\left\{\underset{X\sim\mu}{\Expect}\big[\phi(X)\big] - \underset{Y\sim\nu}{\Expect}\big[\phi(Y)\big]\right\}\!.
\end{equation}
Here, $\text{Lip}_1(\Z)$ denotes the set of $1$-Lipschitz functions $\Z\rightarrow\R$. Equipped with the metric $W_1$, the set $P_1(\cZ)$ becomes a Polish metric space.

For any two probability measures $\mu$ and $\nu$ on $\cZ$ such that $\mu$ is absolutely continuous with respect to $\nu$ (denoted $\mu \ll \nu$) and any convex function $f\colon (0,\infty) \rightarrow \RR$ with $f(1) = 0$, the $f$-divergence of $\mu$ from $\nu$ is given by
\begin{equation}
	\label{eq:fdivergence}
	\Delta_{f}(\mu \!\parallel\! \nu) = \int_{\cZ} f\bigg(\frac{d\mu}{d\nu}\bigg)\,d\nu,
\end{equation}
where $\frac{d\mu}{d\nu}\colon\Z\rightarrow[0,\infty)$ denotes the Radon-Nikodym derivative of $\mu$ with respect to $\nu$ and we take $f(0)=\lim_{t\downarrow 0}f(t)$. Abusing notation slightly, if $\mu$ is not absolutely continuous with respect to $\nu$, then we set $\Delta_{f}(\mu \!\parallel\! \nu)=\infty$. We will refer to a Borel measurable map between metric spaces simply as \emph{measurable}. Likewise, we will refer to Borel measurable sets simply as {\em measurable}. 

\paragraph{Notions of convergence.} Given a sequence of random vectors $X_k\colon\Omega_k\rightarrow\R^m$ defined on probability spaces $(\Omega_k,\cS_k,P_k)$ and a random vector $X\sim\mu$ in $\R^m$, we write either $X_k \leadsto X$ or  $X_k \leadsto \mu$ to indicate that $X_k$ converges in distribution to $X$ (i.e., $\lim_{k\rightarrow\infty}{\EE}_{\smash{P_k}} [\varphi(X_k)] = {\EE}_{X\sim\mu}[\varphi(X)]$ for every bounded continuous function $\varphi\colon\R^m\rightarrow\R$). We write $X_k = o_{P_{k}}(1)$ if $X_k$ tends to zero in $P_k$-probability (i.e., $\lim_{k\rightarrow\infty}P_k\{\|X_k\|<\varepsilon\}=1$ for all $\varepsilon>0$). If $X$ and each $X_k$ are defined on a common probability space $(\Omega,\cS,P)$, then the notation $X_k \xlongrightarrow{p}X$ indicates that $X_k$ converges to $X$ in probability (i.e., $\lim_{k\rightarrow\infty}P\{\|X_k-X\|<\varepsilon\}=1$ for all $\varepsilon>0$), and the notation $X_k \xlongrightarrow{\text{a.s.}}X$ indicates that $X_k$ converges to $X$ almost surely (i.e., $P\{\omega\in\Omega \mid{\lim}_{k\rightarrow\infty} X_{k}(\omega) = X(\omega)\} = 1$).

For any pair of vector-valued sequences $(a_k)$ and $(b_k)$, we write $a_k = O(b_k)$ if there exists a constant $C>0$ such that $\|a_k\| \leq C\|b_k\|$ for all but finitely many $k$; we write $a_k = o(b_k)$ if for every $\varepsilon>0$, the inequality $\|a_k\| \leq \varepsilon \|b_k\|$ holds for all but finitely many $k$; we write $a_k = \Theta(b_k)$ if there exist constants $c,C>0$ such that $c \|b_k\| \leq \|a_k\| \leq C \|b_k\|$ for all but finitely many $k$; and we write $a_k\propto b_k$ if there exists a constant $c$ such that $a_k = cb_k$ for all but finitely many $k$.

\section{Background on Learning with Decision-Dependent Distributions}
\label{sec:performative}

In this section, we formally specify the class of problems that we consider along with relevant assumptions. In order to model decision-dependence, we fix a nonempty, closed, convex set $\mathcal{X}\subset\R^d$, a nonempty Polish metric space $(\Z,d_{\Z})$, and a map $\mathcal{D}\colon\mathcal{X}\to P_1(\Z)$. For ease of notation, we set $\D_x := \D(x)$ for each $x\in\cX$. Thus, 
 $\{\mathcal{D}_{x}\}_{x\in\cX}$ is a family of probability distributions on $\cZ$ indexed by points $x\in \cX$. The variational behavior of the map $\mathcal{D}\colon\mathcal{X}\to P_1(\Z)$ will play a central role in our work. In particular, following \cite{perdomo2020performative}, we will assume that $\mathcal{D}\colon\mathcal{X}\to P_1(\Z)$ is Lipschitz continuous. 

\begin{assumption}[Lipschitz distribution map]\label{assmp:lipdist}
  There is a constant $\gamma>0$ satisfying
  \begin{equation*}\label{distvar}
	W_1\big(\D(x),\D(x')\big)\leq \gamma\|x-x'\|  \qquad \textrm{for all }x,x'\in \cX.
\footnote{Assumption~\ref{assmp:lipdist} implies in particular that
$\{\mathcal{D}_{x}\}_{x\in\cX}$ is a {\em Markov kernel} from $\cX$
  to $\cZ$ (see Lemma~\ref{lem:markov}); this is crucial to have a well-defined probability space on which to analyze decision-dependent stochastic approximation problems.}
  \end{equation*}
\end{assumption}

Next, we fix a measurable map $G\colon\cX\times\cZ\to\R^d$ such that each section $G(x,\cdot)\colon\cZ\rightarrow\R^d$ is Lipschitz continuous, and we define the family of maps $G_{x} \colon \cX \rightarrow\R^d$ by setting
\begin{equation*}
G_x(y)=\underset{z\sim \mathcal{D}_{x}}{\Expect} G(y,z)	
\end{equation*}
for all $x,y\in\cX$; since $\cD_{x}$ has finite first moment, the Lipschitz continuity of $G(y,\cdot)$ guarantees that $G_{x}(y)$ is well defined. 
Additionally, we impose the following standard regularity conditions on $G\colon\X\times \mathcal{Z}\to\R^d$.

\begin{assumption}[Loss regularity]\label{assmp:lipgrad} There are constants $\beta, \bar L \geq 0$ and $\alpha>0$ and a measurable function $L\colon\cZ\rightarrow[0,\infty)$ satisfying the following three conditions.
  \begin{enumerate}[label=(\roman*)]
    \item\label{assmp:lipgrad1} {\bf (Lipschitz continuity)} For all $x,x'\in \mathcal{X}$ and $z,z'\in \mathcal{Z}$, the Lipschitz bounds
    \begin{align*}
    \|G(x,z)-G(x',z)\|&\leq L(z)\cdot\|x-x'\|,\\
    \|G(x,z)-G(x,z')\|&\leq \beta\cdot d_{\Z}(z,z')    
    \end{align*}
    hold. Further, the second moment bound ${\Expect}_{z\sim\D_{x}}[L(z)^2]\leq \bar L^2$ holds for all $x\in \cX$.
    \item\label{assmp:lipgrad2} {\bf (Monotonicity)} For all $x\in\X$, the map $G_x(\cdot)$ is $\alpha$-strongly monotone on $\cX$.
    \item\label{assmp:lipgrad3}  {\bf (Compatibility)} The inequality $\gamma\beta<\alpha$ holds.
  \end{enumerate}
\end{assumption}
A few comments are in order. Condition \ref{assmp:lipgrad1} asserts that the map $G(x,z)$ is separately Lipschitz continuous with respect to both $x$ and $z$; an immediate consequence is that  $G_{x}(\cdot)$ is $\bar L$-Lipschitz continuous. Condition \ref{assmp:lipgrad2} is a standard monotonicity requirement; when $G(x,z)=\nabla_{x}\ell(x,z)$, this corresponds to $\alpha$-strong convexity of the expected loss. Condition \ref{assmp:lipgrad3} ensures that the Lipschitz constant $\gamma$ of $\mathcal{D}(\cdot)$ is sufficiently small in comparison with the monotonicity constant $\alpha$, signifying that the dynamics are  ``mild.'' This condition is widely used in the existing literature; see, e.g., \cite{perdomo2020performative}, \cite{piliouras2022multi}, \cite{narang2022multiplayer}, and \cite{wood2022stochastic}.

Assumptions~\ref{assmp:lipdist} and \ref{assmp:lipgrad} imply the following useful Lipschitz estimate on the deviation $G_{x}(y)-G_{x'}(y)$ arising from the shift in distribution from $\mathcal{D}_x$ to $\mathcal{D}_{x'}$. We will use this estimate often in what follows. The proof is identical to that of Lemma 5 of \citet{narang2022multiplayer}; a short argument appears in Section~\ref{sec:proof_lem_dev}.  

\begin{lemma}[Deviation]\label{lem:dev_game_jac}
Suppose that Assumptions~\ref{assmp:lipdist} and \ref{assmp:lipgrad} hold. Then the estimate
\begin{align*}
\|G_{x}(y)-G_{x'}(y)\|&\leq \gamma\beta\cdot\|x-x'\|
\end{align*}
holds for all $x,x',y\in \X$.
\end{lemma}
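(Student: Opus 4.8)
The plan is to bound $\|G_x(y) - G_{x'}(y)\|$ by exhibiting it as an integral of a $1$-Lipschitz-up-to-scaling function against the difference of measures $\mathcal{D}(x)$ and $\mathcal{D}(x')$, and then invoke the dual (Kantorovich--Rubinstein) form of the Wasserstein-1 distance together with Assumption~\ref{assmp:lipdist}. Concretely, fix $x, x', y \in \mathcal{X}$. By definition,
\[
G_x(y) - G_{x'}(y) = \underset{z\sim\mathcal{D}(x)}{\Expect}G(y,z) - \underset{z'\sim\mathcal{D}(x')}{\Expect}G(y,z').
\]
To turn this vector-valued difference into a scalar, I would pick a unit vector $u \in \R^d$ achieving $\|G_x(y) - G_{x'}(y)\| = \langle u, G_x(y) - G_{x'}(y)\rangle$, and consider the scalar function $h_u(z) := \langle u, G(y,z)\rangle$. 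Then
\[
\|G_x(y) - G_{x'}(y)\| = \underset{z\sim\mathcal{D}(x)}{\Expect}\big[h_u(z)\big] - \underset{z'\sim\mathcal{D}(x')}{\Expect}\big[h_u(z')\big].
\]

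The key observation is that $h_u$ is $\beta$-Lipschitz on $\mathcal{Z}$: for any $z, z' \in \mathcal{Z}$, Cauchy--Schwarz and Assumption~\ref{assmp:lipgrad}\ref{assmp:lipgrad1} (the $z$-Lipschitz bound on $G$) give $|h_u(z) - h_u(z')| \le \|u\|\cdot\|G(y,z) - G(y,z')\| \le \beta\, d_{\mathcal{Z}}(z,z')$. Hence $\beta^{-1}h_u \in \mathrm{Lip}_1(\mathcal{Z})$, and the variational definition of $W_1$ yields
\[
\underset{z\sim\mathcal{D}(x)}{\Expect}\big[h_u(z)\big] - \underset{z'\sim\mathcal{D}(x')}{\Expect}\big[h_u(z')\big] = \beta\left(\underset{z\sim\mathcal{D}(x)}{\Expect}\big[\tfrac{1}{\beta}h_u(z)\big] - \underset{z'\sim\mathcal{D}(x')}{\Expect}\big[\tfrac{1}{\beta}h_u(z')\big]\right) \le \beta\, W_1\big(\mathcal{D}(x),\mathcal{D}(x')\big).
\]
Combining this with Assumption~\ref{assmp:lipdist}, namely $W_1(\mathcal{D}(x),\mathcal{D}(x')) \le \gamma\|x-x'\|$, gives $\|G_x(y) - G_{x'}(y)\| \le \gamma\beta\|x-x'\|$, as claimed.

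There is no serious obstacle here; the argument is short. The only minor points to be careful about are: (i) the expectations are finite, which is guaranteed by the footnoted remark that $\mathcal{D}(x)$ has a finite first moment and $G(y,\cdot)$ is Lipschitz, so $h_u$ has at most linear growth and is integrable; and (ii) the supremum in the definition of $W_1$ is over $1$-Lipschitz functions without any boundedness requirement, so $\beta^{-1}h_u$ is an admissible test function even though it need not be bounded. Alternatively, one could phrase the same computation via an optimal (or near-optimal) coupling $\pi$ of $\mathcal{D}(x)$ and $\mathcal{D}(x')$: writing the difference as $\Expect_{(z,z')\sim\pi}[G(y,z) - G(y,z')]$, applying Jensen's inequality to pull the norm inside, and then using the $\beta$-Lipschitz bound pointwise followed by $\Expect_\pi[d_{\mathcal{Z}}(z,z')] = W_1(\mathcal{D}(x),\mathcal{D}(x'))$ (or $\le W_1 + \varepsilon$ for a near-optimal coupling). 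Either route is routine; I would present the dual-norm version since it avoids existence-of-optimal-coupling technicalities on a general metric space $\mathcal{Z}$.
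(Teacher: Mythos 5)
Your proposal is correct and follows essentially the same route as the paper's proof: both dualize the Euclidean norm against unit vectors, note that $z\mapsto\langle G(y,z),v\rangle$ is $\beta$-Lipschitz by Assumption~\ref{assmp:lipgrad}\ref{assmp:lipgrad1}, invoke the Kantorovich--Rubinstein form of $W_1$, and finish with Assumption~\ref{assmp:lipdist}. The only cosmetic difference is that you fix a single maximizing unit vector while the paper keeps the supremum over all $\|v\|\leq 1$; your attention to integrability and to the admissibility of unbounded test functions is a fine (if unremarked-on in the paper) bonus.
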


Corresponding to each distribution $\cD_x$ is the variational inequality
\begin{equation}\label{eqn:VIx}
0\in \mathop{\EE}_{z\sim \mathcal{\cD}_x}G(y,z)+N_{\mathcal{X}}(y).\tag*{VI($\mathcal{D}_x$)}
\end{equation}
The following definition, originating in the work of \cite{perdomo2020performative} for performative prediction and \cite{narang2022multiplayer} for its multiplayer extension, is the key solution concept that we will use. 

\begin{definition}[Equilibrium point]
We say that $x^{\star}$ is an {\em equilibrium point} of the family of variational inequalities $\{\text{VI}(\cD_x)\}_{x\in\cX}$ if it satisfies:
$$0\in G_{x^{\star}}(x^{\star})+N_{\cX}(x^{\star}).$$
\end{definition}

Thus, $x^{\star}$ is an equilibrium point of $\{\text{VI}(\cD_x)\}_{x\in\cX}$ if $y = x^{\star}$ is itself a solution to the variational inequality $\text{VI}(\cD_{\xs})$ induced by the distribution $\mathcal{D}_{x^{\star}}$. Equivalently, these are exactly the fixed points of the  map
\begin{equation}\label{eqn:fixed_points}
\mathtt{Sol}(x):=\{y \mid 0\in G_{x}(y)+N_{\cX}(y)\},
\end{equation}
which is single-valued on $\cX$ by  the continuity and strong monotonicity of $G_x(\cdot)$ \citep[e.g., see][Example 12.7 and Proposition 12.54]{Rockafellar1998}. Equilibrium  points have a clear intuitive meaning: a learning system that deploys a learning rule $x^{\star}$ that is at equilibrium has no incentive to deviate from 
$x^{\star}$ based only on the data drawn from $\mathcal{D}(x^{\star})$. The key role of equilibrium points in (multiplayer) performative prediction is by now well documented; see, e.g., \cite{drusvyatskiy2020stochastic}, \cite{mendler2020stochastic}, \cite{narang2022multiplayer}, \cite{perdomo2020performative}, \cite{piliouras2022multi}, and \cite{wood2022stochastic}. Most importantly, equilibrium points exist and are unique under Assumptions~\ref{assmp:lipdist} and \ref{assmp:lipgrad}. The proof is identical to that of Theorem 7 of \citet{narang2022multiplayer}; we provide a short argument in Section~\ref{sec:proof_thm_exist} for completeness.  

\begin{theorem}[Existence]\label{thm:existence}
Suppose that Assumptions~\ref{assmp:lipdist} and \ref{assmp:lipgrad} hold. Then the map $\mathtt{Sol}(\cdot)$ is $\frac{\gamma\beta}{\alpha}$-contractive on $\mathcal{X}$ and therefore the problem admits a unique equilibrium point $x^{\star}$.
\end{theorem}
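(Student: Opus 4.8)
The plan is to show that the solution map $\mathtt{Sol}(\cdot)$ is a contraction on $\cX$ and then invoke the Banach fixed-point theorem. First I would fix $x,x'\in\cX$ and write $y=\mathtt{Sol}(x)$, $y'=\mathtt{Sol}(x')$, so that $0\in G_x(y)+N_\cX(y)$ and $0\in G_{x'}(y')+N_\cX(y')$. The first step is to recall that for any map $F$ that is $\alpha$-strongly monotone on $\cX$ and any fixed vector $v$, the solution $\bar y$ of the variational inequality $0\in F(\bar y)+N_\cX(\bar y)$ depends $\tfrac1\alpha$-Lipschitz-continuously on $v$ when $F$ is shifted by $v$; more precisely, I want the comparison estimate showing that $y$ is exactly the solution of the VI with map $y\mapsto G_{x'}(y)+\big(G_x(y)-G_{x'}(y)\big)$, i.e. a $v$-perturbation of the problem defining $y'$ where the "perturbation" is the deviation term $G_x(\cdot)-G_{x'}(\cdot)$.

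The key inequality comes from the standard two-VI argument: testing the inclusion $-G_x(y)\in N_\cX(y)$ against $y'-y$ and $-G_{x'}(y')\in N_\cX(y')$ against $y-y'$, and adding, gives
\begin{equation*}
\langle G_x(y)-G_{x'}(y'),\, y-y'\rangle \le 0.
\end{equation*}
Now I would split $G_x(y)-G_{x'}(y') = \big(G_{x'}(y)-G_{x'}(y')\big) + \big(G_x(y)-G_{x'}(y)\big)$ and use $\alpha$-strong monotonicity of $G_{x'}(\cdot)$ on the first bracket to obtain $\alpha\|y-y'\|^2 \le \langle G_{x'}(y)-G_{x}(y),\, y-y'\rangle \le \|G_x(y)-G_{x'}(y)\|\cdot\|y-y'\|$. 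Dividing by $\|y-y'\|$ (the case $y=y'$ being trivial) yields $\|y-y'\|\le \tfrac1\alpha\|G_x(y)-G_{x'}(y)\|$. Then I would apply the Deviation Lemma (Lemma~\ref{lem:dev_game_jac}) to bound $\|G_x(y)-G_{x'}(y)\|\le \gamma\beta\|x-x'\|$, giving $\|\mathtt{Sol}(x)-\mathtt{Sol}(x')\|\le \tfrac{\gamma\beta}{\alpha}\|x-x'\|$. Since Assumption~\ref{assmp:lipgrad}\ref{assmp:lipgrad3} guarantees $\gamma\beta<\alpha$, the map $\mathtt{Sol}(\cdot)$ is a contraction on the nonempty closed set $\cX\subseteq\R^d$, which is complete, so Banach's fixed-point theorem gives a unique fixed point $x^\star$, which is precisely the unique equilibrium point.

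The main obstacle—really the only non-bookkeeping point—is making sure that $\mathtt{Sol}(\cdot)$ is genuinely single-valued and well-defined as a self-map of $\cX$, i.e. that for each $x$ the VI $0\in G_x(y)+N_\cX(y)$ has exactly one solution $y\in\cX$. This follows from the classical theory of strongly monotone variational inequalities (existence and uniqueness under $\alpha$-strong monotonicity and Lipschitz continuity of $G_x(\cdot)$ on a closed convex set, both of which hold by Assumption~\ref{assmp:lipgrad}), so I would cite that rather than reprove it; everything else is the short computation above.
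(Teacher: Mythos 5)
Your argument is correct and follows essentially the same route as the paper: the two-VI normal-cone test combined with $\alpha$-strong monotonicity and the Deviation Lemma (Lemma~\ref{lem:dev_game_jac}) to get $\|\mathtt{Sol}(x)-\mathtt{Sol}(x')\|\le\tfrac{\gamma\beta}{\alpha}\|x-x'\|$, followed by the Banach fixed-point theorem; the only difference is the cosmetic one of using strong monotonicity of $G_{x'}(\cdot)$ and evaluating the deviation at $y$ rather than of $G_{x}(\cdot)$ with the deviation at $y'$. Your remark on single-valuedness of $\mathtt{Sol}(\cdot)$ matches the paper's treatment, which likewise appeals to the classical theory of strongly monotone variational inequalities.
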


 We note in passing that when $\gamma\beta\geq \alpha$, equilibrium points may easily fail to exist; see, e.g., \citet[Proposition 3.6]{perdomo2020performative}. Therefore, the regime $\gamma\beta<\alpha$ is the natural setting to consider when searching for equilibrium points.

\section{Convergence and Asymptotic Normality}\label{sec:normal}
A central goal of performative prediction is the search for equilibrium points, which are simply the fixed points of the map $\mathtt{Sol}(\cdot)$ defined in \eqref{eqn:fixed_points}.
Though the map $\mathtt{Sol}(\cdot)$ is contractive, it cannot be evaluated directly since it involves evaluating the expectation $G_{x}(y)={\Expect}_{z\sim \mathcal{D}(x)}[G(y,z)]$. Employing the standard assumption that the only access to $\mathcal{D}(x)$ is through sampling, one may instead in iteration $t$ take a single stochastic forward-backward step on the problem corresponding to $\mathtt{Sol}(x_t)$. The resulting procedure is recorded in Algorithm~\ref{alg:1} below. In the setting of performative prediction \citep{mendler2020stochastic} and its multiplayer extension \citep{narang2022multiplayer}, the algorithm reduces to projected stochastic gradient methods.

\begin{algorithm}[h!]
  \caption{Stochastic Forward-Backward Method (SFB)\hfill} \label{alg:1}

  {\bf Input}: initial $x_0\in\mathcal{X}$ and step size sequence $(\eta_t)_{t\geq 0}\subset (0,\infty)$

  {\bf Step} $t\geq 0$:
  \begin{equation*}
    \begin{aligned}
      &\textrm{Sample~}  z_{t} \sim \cD(x_{t})  \\
      &\textrm{Set~} x_{t+1}=\proj_{\mathcal{X}}\big(x_t-\eta_tG(x_t,z_t)\big)
    \end{aligned}
  \end{equation*}
\end{algorithm}

For the remainder of Section~\ref{sec:normal}, we let $(x_t)_{t\geq0}$ denote the stochastic process generated by Algorithm~\ref{alg:1} on the probability space $(\cZ^{\NN}, \cB(\cZ^{\NN}), \mathbb{P})$, where $\PP =  \bigotimes_{i=0}^{\infty}\cD_{x_{i}}$ is the unique probability measure on the countable product space $\cZ^\NN$ satisfying
\begin{equation}\label{eq:prob}
	\PP(E_{0} \times \cdots \times E_{t} \times \cZ^{\NN}) = \int_{E_{0}}\cdots \int_{E_{t}}d\cD_{x_{t}}(z_{t})\cdots d\cD_{x_0}(z_0)
\end{equation}
for all $E_{0},\ldots,E_{t}\in\cB(\Z)$ and $t\geq0$ (see Theorem~\ref{thm:kolmogorov}). We will see that under very mild assumptions, the SFB iterates $x_t$ almost surely converge to the equilibrium point $x^{\star}$. To this end, we define for each $(x,z)\in\cX\times\cZ$ the noise vector
\begin{equation}\label{eq:noise_vector}
	\xi_{x}(z) := G(x, z) - G_{x}(x)
\end{equation}
and impose the following standard bound on the conditional second moment of the noise. In words, this assumption stipulates that the variance of the noise $\xi_{x_t}(z)$ with respect to the distribution $\cD_{x_t}$ induced by the iterate $x_t$ grows at most quadratically with the distance of $x_t$ to $x^{\star}$.

\begin{assumption}[Variance bound]\label{assmp:stoc}  
There is a constant $K\geq0$ such that for all $t\geq0$, the following bound holds almost surely: $$\underset{z_t\sim \mathcal{D}_{x_t}}{\Expect} \|\xi_{x_t}(z_t)\|^2 \leq K(1+\|x_t-\xs\|^2).$$ \end{assumption}

The subsequent proposition shows that the SFB iterates almost surely converge to the equilibrium point under Assumptions~\ref{assmp:lipdist}--\ref{assmp:stoc} and standard conditions restricting the rate of decrease of the step sizes $\eta_t$. The proof, which follows from a simple one-step improvement bound for the SFB method \cite[Theorem 24]{narang2022multiplayer} and an application of the Robbins-Siegmund almost supermartingale convergence theorem \citep{robbins71}, appears in Section~\ref{sec:proof_of_simple_prop}.
\begin{proposition}[Almost sure convergence]\label{prop:asconv}
  Suppose that Assumptions~\ref{assmp:lipdist}--\ref{assmp:stoc} hold
	and the step size sequence in Algorithm~\ref{alg:1} satisfies $\sum_{t=0}^{\infty}\eta_{t} = \infty$ and $\sum_{t=0}^{\infty}\eta_{t}^2 < \infty.$
  Then $x_{t}$ converges to $x^{\star}$ almost surely as $t\to\infty$, and $\sum_{t=0}^{\infty}\eta_{t}\|x_{t} - \xs\|^2 < \infty$ almost surely. Moreover, if $\eta_{t} = \Theta(t^{-\nu})$ for some $\nu\in\big(\tfrac{1}{2}, 1\big)$, then ${\EE} \|x_{t} - \xs\|^{2} = O(t^{-\nu})$ and hence $\sum_{t=1}^{\infty}t^{-1/2}\|x_{t} - \xs\|^2 < \infty$ almost surely.
\end{proposition}

The main result of this section is the asymptotic normality of the average iterates $$\bar x_{t} := \frac{1}{t}\sum_{i=1}^{t}x_{i},$$ for which we require the following additional assumption.

\begin{assumption}\label{assmp:const} The following four conditions hold.
\begin{enumerate}[label=(\roman*)]
\item\label{it:inter} {\bf (Interiority)} The equilibrium point $\xs$ lies in the interior of $\cX$.
\item\label{it:joint_smoothness} {\bf (Lipschitz Jacobian)} On a neighborhood of $x^{\star}$, the map $x \mapsto G_x(x)$ is differentiable with Lipschitz continuous Jacobian.
\item\label{it:uni_integral_smoothness} {\bf (Asymptotic uniform integrability)}  We have 
	$$\limsup_{t\to\infty}\underset{z_t\sim\cD_{x_t}}{\Expect}\!\big[\|G(\xs, z_t)\|^{2}\mathbf{1}_{\smash{\{\|G(\xs,z_t)\| \geq N\}}}\big] \xlongrightarrow{\text{a.s.}} 0 \qquad\text{as~}N\to\infty$$
	and
	$$\underset{z\sim\cD_{\xs}}{\Expect}\!\big[\|G(\xs, z)\|^{2}\mathbf{1}_{\smash{\{\|G(\xs,z)\| \geq N\}}}\big] \to 0 \qquad\text{as~}N\to\infty.$$
\item \label{it:Lindeberg} { \bf (Lindeberg's condition)} For all $\varepsilon>0$,
  		$$
  		\frac{1}{t}\sum_{i=0}^{t-1} \underset{z_i\sim\cD_{x_i}}{\Expect}\!\big[\|\xi_{x_i}(z_i)\|^{2}\mathbf{1}_{\smash{\{\|\xi_{x_i}(z_i)\| \geq \varepsilon\sqrt{t}\}}}\big] \xlongrightarrow{p} 0 \qquad\text{as~}t\to\infty.
  		$$
\end{enumerate}
\end{assumption}

A few comments are in order. First, the interiority condition \ref{it:inter} is a standard assumption for asymptotic normality results even in static settings \citep{polyak92}. The smoothness condition~\ref{it:joint_smoothness} is fairly mild. For example, it holds if the partial derivatives $\nabla_y G_x(y)$ and $\nabla_x G_x(y)$ exist and are Lipschitz continuous on a neighborhood of $(\xs,\xs)$; in turn, this holds if, on a neighborhood of $\xs$, each distribution $\mathcal{D}(x)$ admits a density $p(x,z) = \frac{d\cD(x)}{d\mu}(z)$ with respect to a common base measure $\mu \gg \cD(x)$ such that $G(\cdot, z)$ and $p(\cdot, z)$ are locally $C^{1,1}$-smooth\footnote{Recall that a map is said to be locally $C^{k,1}$-smooth if it is $C^k$-smooth and its $k^{\text{th}}$-order partial derivatives are locally Lipschitz continuous.} and sufficient integrability conditions hold to invoke dominated convergence.  

Asymptotic uniform integrability conditions such as \ref{it:uni_integral_smoothness} are key for obtaining convergence of moments \citep[see][Section~2.5]{van2000asymptotic}; it is used in our setting to establish $$\underset{z_t\sim\D_{\smash{x_t}}}{\Expect}\!\big[G(x_t, z_t)G(x_t, z_t)^\top\big] \xlongrightarrow{\text{a.s.}} \underset{z\sim\D_{\smash{\xs}}}{\Expect}\!\big[G(\xs, z)G(\xs, z)^\top\big] \qquad\text{as~}t\to\infty$$ (see Theorem~\ref{lem:covariance-convergence}). Condition \ref{it:uni_integral_smoothness} holds, for instance, if there exists a neighborhood $\cV$ of $\xs$ satisfying $\sup_{x\in\cV}{\Expect}_{z\sim\cD_{x}}\!\big[\|G(\xs, z)\|^{2}\mathbf{1}_{\smash{\{\|G(\xs,z)\| \geq N\}}}\big] \to 0$ as $N\to\infty$; in turn, this holds if $\sup_{x\in\cV}{\Expect}_{z\sim\cD_{x}}\!\big[\|G(\xs, z)\|^{q}\big] <\infty$ for some $q\in(2,\infty)$, e.g., if each random vector $G(\xs, z)$, with $z\sim \D_x$, is sub-Gaussian with the same variance proxy $\sigma^{2}$ for all $x\in\cV$.  Lindeberg's condition \ref{it:Lindeberg} imposes a standard constraint on the sequence of noise vectors $\xi_{x_t}(z_t)$ for application of the martingale central limit theorem (see Theorem~\ref{thm:clt}); it holds, for example, if both $\sup_{t\geq0} {\Expect}_{z_t \sim \D_{x_t}}\!\big[\|\xi_{x_t}(z_t)\|^{2}\big] < \infty$ almost surely and the asymptotic uniform integrability condition $\limsup_{t\to\infty}{\Expect}_{z_t \sim \D_{x_t}}\!\big[\|\xi_{x_t}(z_t)\|^{2}\mathbf{1}_{\smash{\{\|\xi_{x_t}(z_t)\| \geq N\}}}\big] \xlongrightarrow{p} 0$ as $N\to\infty$ is fulfilled.

We are now ready to present our main result. \begin{theorem}[Asymptotic normality]\label{thm:anperf}
  Suppose that Assumptions~\ref{assmp:lipdist}--\ref{assmp:const} hold and the step size sequence in Algorithm~\ref{alg:1} satisfies $\eta_{t} \propto t^{-\nu}$ for some $\nu\in\big(\tfrac{1}{2}, 1\big)$. Let $R\colon\cX\rightarrow\R^d$ and $\Sigma\succeq0$ be given by 
    \begin{equation*}
    R(x) = \underset{z\sim\D_x}{\Expect}[G(x,z)]
    \qquad\text{and}\qquad
	\Sigma = \underset{z\sim\D_{\xs}}{\Expect}\big[G(\xs, z)G(\xs, z)^\top\big],
  \end{equation*}
and let $\xi_t = \xi_{x_t}(z_t)$ denote the noise vector at step $t$ given by \eqref{eq:noise_vector}. 
Then, as $t\to\infty$, the iterates $x_t$ and their running averages $\bar{x}_t=\frac{1}{t}\sum_{i=1}^{t}x_{i}$ converge to $x^{\star}$ almost surely,
  \begin{equation*}
  	\sqrt{t}(\bar x_{t} - x^{\star}) = -\nabla R(\xs)^{-1}\bigg(\frac{1}{\sqrt{t}}\sum_{i=0}^{t-1}\xi_i\bigg)  + o_{\PP}(1),
  \end{equation*}
  and hence
  \begin{equation*}
    \sqrt{t}(\bar{x}_t - x^{\star})\leadsto\mathsf{N}\big(0,\nabla R(\xs)^{-1}\cdot\Sigma\cdot\nabla R(\xs)^{-\top}\big).
  \end{equation*}
\end{theorem}

Theorem~\ref{thm:anperf} asserts that  under mild assumptions, the deviations $\sqrt{t}(\bar{x}_t - x^{\star})$ converge in distribution to a Gaussian random vector with covariance matrix  
$\nabla R(\xs)^{-1}\cdot\Sigma\cdot\nabla R(\xs)^{-\top}$. Moreover, under mild regularity conditions  we may write
$$\nabla R(\xs)=\underbrace{\underset{z\sim\D(x^{\star})}{\Expect}[\nabla_x G(x^{\star},z)]}_\text{static}+ \underbrace{\frac{d}{dy} \underset{z\sim\D(y)}{\Expect}[G(\xs, z)]\Big|_{y=x^{\star}}}_\text{dynamic}\!.$$
It is part of the theorem's conclusion that the matrix $\nabla R(\xs)$ is invertible. It is worthwhile to note that the effect of the distributional shift on the asymptotic covariance is entirely captured by the second ``dynamic'' term in $\nabla R(\xs)$. When the distributions $\mathcal{D}(x)$ admit a density $p(x,z) = \frac{d\cD(x)}{d\mu}(z)$ as before, the Jacobian $\nabla R(x^{\star})$ admits the simple description:
$$\nabla R(\xs) = \underset{z\sim\D(x^{\star})}{\Expect}[\nabla_x G(x^{\star},z)] + \int G(\xs, z) \nabla_x p(\xs, z)^\top\,d\mu(z).$$

\begin{example}[Performative prediction with location-scale families]
As an explicit example of Theorem~\ref{thm:anperf}, let us look at the case when $G(x,z)=\nabla_x \ell(x,z)$ is the gradient of a loss function and $\mathcal{D}(x)$ is a ``linear perturbation'' of a fixed base distribution $\mathcal{D}_0$. Such distributions are quite reasonable when modeling performative effects, as explained by \cite{miller2021outside}. In this case, we have
$$z\sim \mathcal{D}(x)\quad \Longleftrightarrow \quad z-Ax\sim\mathcal{D}_0$$
for some fixed matrix $A\in\R^{n\times d}$, where $n$ is the dimension of the data $z$. Then a quick computation shows that we may write 
$$\nabla R(x)=\underset{z\sim\D(x)}{\Expect}\big[\nabla^2_{xx}\ell(x,z)+\nabla^2_{zx}\ell(x,z)A\big]$$
under mild integrability conditions. Thus, the dynamic part of $\nabla R(x^{\star})$ is governed by the product of the matrix of mixed partial derivatives $\nabla^2_{zx}\ell(x^{\star},z)\in\R^{d\times n}$  with $A$. The former measures the sensitivity of the gradient $\nabla_x\ell(x^{\star},z)$ at $x^\star$ to changes in the data $z$, while the latter measures the performative effects of the distributional shift.
\end{example}

\begin{example}[Multiplayer performative prediction with location-scale families]
More generally, let us look at the problem of multiplayer performative prediction \citep{narang2022multiplayer}. In this case, the map $G$ takes the form 
$$G(x,z)=\big(\nabla_1 \ell_1(x,z_1),\ldots,\nabla_k \ell_k(x,z_k)\big)$$
where $\ell_i$ is a loss for each player $i$ and $\nabla_i\ell_i$ denotes the gradient of $\ell_i$ with respect to the action $x_i$ of player $i$. The distribution $\mathcal{D}(x)$ takes the product form 
$$\mathcal{D}(x)=\mathcal{D}_1(x)\times\cdots\times \mathcal{D}_k(x).$$
As highlighted by \cite{narang2022multiplayer}, a natural parametric assumption is that 
there exist probability distributions $\mathcal{P}_{i}$ and matrices $A_i$, $A_{-i}$ such that the following holds:
$$z_i\sim \mathcal{D}_i(x) \quad \Longleftrightarrow\quad
z_i-A_ix_i-A_{-i}x_{-i}\sim \mathcal{P}_{i}.$$
Here $x_{-i}$ denotes the vector obtained from $x$ by deleting the coordinate $x_i$; thus, the distribution used by player $i$
 is a ``linear perturbation'' of a fixed base distribution $\mathcal{P}_{i}$. We can interpret the matrices $A_i$ and $A_{-i}$ as quantifying the performative effects of player $i$'s decisions and the rest of the players' decisions, respectively, on the  distribution $\mathcal{D}_i$ governing player $i$'s data.
It is straightforward to check the expression
$$\nabla R_i(x)=\mathop{\mathbb{E}}_{z_i\sim \mathcal{D}_i(x)} \!\left[\nabla^2_{x x_{i}}\ell_i(x,z_i)+ \nabla^2_{z_{i}x_{i}}\ell_i(x,z_i)[A_i, A_{-i}]\right]$$
under mild integrability conditions, where $[A_i, A_{-i}]x = A_ix_i + A_{-i}x_{-i}$. Thus, the dynamic part of $\nabla R_i(x^{\star})$ is governed by the product of the matrix of mixed partial derivatives $\nabla^2_{z_{i}x_{i}}\ell_i(x^{\star},z_i)$ with $[A_i, A_{-i}]$. \end{example}

\subsection{Proof of Theorem~\ref{thm:anperf}}
\label{sec:proof}

The proof of Theorem~\ref{thm:anperf} is based on the stochastic approximation result of Polyak and Juditsky \cite[Theorem 2]{polyak92}, which we review in Appendix~\ref{sec:rev_assymptot_norm}.
For the remainder of this section, we impose the assumptions of Theorem~\ref{thm:anperf}.

Consider the map $R\colon\cX\rightarrow\R^d$ given by
$
R(x) = G_x(x).
$
In light of the interiority condition $\xs\in\interior \X$ of Assumption~\ref{assmp:const}, the equilibrium point $\xs$ is the unique solution to the equation $R(x)=0$ on $\interior \X$. 
Observe that the noise vector $\xi_t = \xi_{x_t}(z_t)$ satisfies the relation
\begin{equation*}\label{eq:direction}
	G(x_t, z_t) = R(x_t) + \xi_t 
\end{equation*} 
and so we may write the iterates of Algorithm~\ref{alg:1} as
\begin{equation}\label{eq:process}
	x_{t+1} = x_{t} - \eta_{t}\big(R(x_t) + \xi_t + \zeta_{t}\big),
\end{equation}
where
\begin{equation}\label{eq:extra}
	\zeta_{t} := \frac{x_{t} - \eta_{t}\big(R(x_{t}) + \xi_{t}\big) - \proj_{\cX}\big(x_t-\eta_t\big(R(x_{t}) + \xi_{t}\big)\big)}{\eta_t}.
\end{equation}
Our goal is to apply Theorem~\ref{thm:polyak} to the process \eqref{eq:process} on the filtered probability space $(\cZ^{\NN}, \cB(\cZ^{\NN}), \FF, \PP)$, where $\mathbb{F} = (\cF_{t})_{t\geq 0}$ is the filtration given by
\begin{equation}\label{eq:filtration}
	\mathcal{F}_0 := \{\emptyset,\cZ^{\NN}\}\quad\text{and}\quad\mathcal{F}_t := \{A\times \cZ^{\NN} \mid A\in\cB(\Z^t)\} \qquad\text{for all~}t\geq 1
\end{equation} 
and $\PP =  \bigotimes_{i=0}^{\infty}\cD_{x_{i}}$ is given by \eqref{eq:prob}.
  In what follows, we establish the necessary assumptions for Theorem~\ref{thm:polyak}.
  
 To begin, we note that the map $R$ is Lipschitz continuous and strongly monotone on $\cX$; in particular, $R$ is measurable.
  
 \begin{lemma}[Lipschitz continuity and strong monotonicity]\label{lem:lip}
	The map $R$ is $(\bar{L}+\gamma\beta)$-Lipschitz continuous and $(\alpha - \gamma\beta)$-strongly monotone on $\cX$.
\end{lemma}
\begin{proof}
	Let $x,y\in\cX$. Then 
	\begin{equation*}
		\|R(x) - R(y)\|\leq \|G_{x}(x) - G_y(x)\| + \|G_{y}(x) - G_y(y)\| \leq (\gamma\beta + \bar{L})\|x-y\|
	\end{equation*}
	as a consequence of Lemma~\ref{lem:dev_game_jac} and the $\bar{L}$-Lipschitz continuity of $G_y(\cdot)$. Similarly, 
	\begin{align*}
		\langle R(x)-R(y),x-y\rangle &= \langle G_x(x)-G_y(x),x-y\rangle + \langle G_y(x)-G_y(y), x-y \rangle \\  &\geq -\|G_{x}(x) - G_y(x)\|\|x-y\| + \alpha\|x-y\|^2 \\ &\geq (-\gamma\beta + \alpha)\|x-y\|^2
	\end{align*}
	as a consequence of the $\alpha$-strong monotonicity of $G_y(\cdot)$ and Lemma~\ref{lem:dev_game_jac}.
\end{proof}
  
To establish Assumption~\ref{assmp:polyakstoc}, observe first that 
$\sup_{t\geq0} {\EE}\|\xi_{t}\|^{2} < \infty$ by Assumption~\ref{assmp:stoc} and Proposition~\ref{prop:asconv}. Clearly $x_t$ is $\cF_t$-measurable, $\xi_t$ and $\zeta_t$ are $\mathcal{F}_{t+1}$-measurable, and $\xi_t$ constitutes a martingale difference sequence satisfying
\begin{equation*}
	{\EE}[\xi_{t} \,|\, \cF_{t}] = \underset{z_t\sim\D_{x_t}}{\Expect}[G(x_t,z_t)] - G_{x_t}(x_t) = 0.
\end{equation*}
The following lemma shows that ${\Expect}[\xi_t\xi_t^\top \,|\, \mathcal{F}_t]$ converges to the positive semidefinite matrix
$$
\Sigma = \underset{z\sim\D_\xs}{\Expect}\big[G(\xs, z)G(\xs, z)^\top\big]
$$
almost surely as $t\to\infty$.

\begin{lemma}[Asymptotic covariance]\label{prop:covlimit}
	As $t\to\infty$, we have
	\begin{equation*}
		{\Expect}\big[G(x_t,z_t)G(x_t,z_t)^\top \,|\, \mathcal{F}_t\big]\xlongrightarrow{\textup{a.s.}}\Sigma\qquad\text{and}\qquad {\Expect}\big[\xi_t\xi_t^\top \,|\, \mathcal{F}_t\big]\xlongrightarrow{\textup{a.s.}}\Sigma.
	\end{equation*}
\end{lemma}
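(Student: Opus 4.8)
The plan is to prove the second limit first by decomposing $\xi_t \xi_t^\top = v_t v_t^\top - R(x_t)v_t^\top - v_t R(x_t)^\top + R(x_t)R(x_t)^\top$ after conditioning, using the identity $v_t = R(x_t) + \xi_t$ together with ${\EE}[\xi_t \mid \cF_t] = 0$. Taking conditional expectations yields $\EE[\xi_t\xi_t^\top\mid\cF_t] = \EE[v_tv_t^\top\mid\cF_t] - R(x_t)R(x_t)^\top$. Since $x_t \to \xs$ almost surely (Proposition~\ref{prop:asconv}) and $R$ is Lipschitz with $R(\xs)=0$ (Lemma~\ref{lem:lip}), the term $R(x_t)R(x_t)^\top \to 0$ almost surely. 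Hence the two claimed limits are equivalent, and it suffices to show $\EE[v_tv_t^\top\mid\cF_t]\xlongrightarrow{\textrm{a.s.}}\Sigma$.

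Next I would establish the first limit by writing, on the event that $x_t \to \xs$,
\begin{equation*}
	\EE[v_tv_t^\top\mid\cF_t] = \underset{z\sim\D(x_t)}{\Expect}\big[G(x_t,z)G(x_t,z)^\top\big],
\end{equation*}
and comparing this to $\Sigma = \Expect_{z\sim\D(\xs)}[G(\xs,z)G(\xs,z)^\top]$ in operator norm. I would split the difference into two pieces: $(i)$ the effect of changing the integrand from $G(x_t,z)G(x_t,z)^\top$ to $G(\xs,z)G(\xs,z)^\top$ while keeping the distribution $\D(x_t)$ fixed, and $(ii)$ the effect of shifting the distribution from $\D(x_t)$ to $\D(\xs)$ with the fixed integrand $G(\xs,z)G(\xs,z)^\top$. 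For piece $(i)$, the algebraic identity $ab^\top - cd^\top = (a-c)b^\top + c(b-d)^\top$ with the Lipschitz bound $\|G(x_t,z) - G(\xs,z)\| \le L(z)\|x_t - \xs\|$ from Assumption~\ref{assmp:lipgrad}\ref{assmp:lipgrad1}, combined with Cauchy–Schwarz and the uniform bound $\Expect_{z\sim\D(x)}[L(z)^2]\le\bar L^2$, controls this term by a constant multiple of $\|x_t-\xs\|\big(\|x_t-\xs\| + \text{(second moment of $G(\xs,z)$)}\big)$, which vanishes a.s. — but here one must be slightly careful that the second moment of $G(\xs,z)$ under $\D(x_t)$ stays bounded, which follows from the Lipschitz-in-$z$ bound on $G$ plus the finite first moment of $\D(x_t)$, or more cleanly from the uniform integrability of Assumption~\ref{assmp:const}\ref{it:uni_integral_smoothness}\ref{it:unif1}.

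For piece $(ii)$, I would use the $W_1$-Lipschitz continuity of $\D(\cdot)$ (Assumption~\ref{assmp:lipdist}): the map $z\mapsto G(\xs,z)G(\xs,z)^\top$ is, unfortunately, not globally Lipschitz in $z$ (it has quadratic growth), so the naive $W_1$ bound does not apply directly. This is the main obstacle. To handle it, I would invoke the uniform integrability hypothesis of Assumption~\ref{assmp:const}\ref{it:uni_integral_smoothness}\ref{it:unif1}: truncate the integrand at level $N$, apply the $W_1$ bound to the truncated (now Lipschitz-in-$z$, with constant depending on $N$) integrand to get a term vanishing a.s. as $t\to\infty$, and bound the tail contribution uniformly over $x$ near $\xs$ by $\delta$ using \ref{it:unif1}; sending $N\to\infty$ (equivalently $\delta\to 0$) after $t\to\infty$ closes the argument. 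Since all estimates are deterministic bounds in terms of $\|x_t-\xs\|$ evaluated on the a.s.-convergent sequence $x_t$, the conclusion holds almost surely, completing the proof.
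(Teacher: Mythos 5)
Your proposal is correct and follows essentially the same route as the paper: the paper reduces the second limit to the first via $\EE[\xi_t\xi_t^\top\mid\cF_t]=\EE[v_tv_t^\top\mid\cF_t]-R(x_t)R(x_t)^\top$ and then invokes its appendix Lemma~\ref{lem:covariance-convergence} with $g=G$ along sample paths $x_t\to\xs$, and that lemma's proof is exactly your two-piece argument (Lipschitz-in-$x$ control with $L(z)$ and Cauchy--Schwarz, then truncation of the quadratically growing integrand combined with the $W_1$-Lipschitz bound and the uniform integrability condition to handle the distribution shift). The only difference is organizational: you inline the analytic core that the paper packages as a standalone lemma.
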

\begin{proof}
	Taking into account the almost sure convergence of $x_t$ to $\xs$ (Proposition~\ref{prop:asconv}), the uniform integrability condition \ref{it:uni_integral_smoothness} of Assumption~\ref{assmp:const}, and the Lipschitz condition \ref{assmp:lipgrad1} of Assumption~\ref{assmp:lipgrad}, we may apply Lemma~\ref{lem:covariance-convergence} with $g=G$ along any sample path witnessing $x_t\to x^{\star}$ to obtain ${\EE}[G(x_t,z_t)G(x_t,z_t)^\top\,|\,\cF_t\bigskip]\rightarrow\Sigma$ almost surely as $t\to\infty$. Therefore 
		\vspace{-3mm}
		\begin{equation*}
		{\Expect}\big[\xi_t\xi_t^\top \,|\, \mathcal{F}_t\big] = {\Expect}\big[G(x_t,z_t)G(x_t,z_t)^\top \,|\, \mathcal{F}_t\big] - R(x_t)R(x_t)^\top\xlongrightarrow{\text{a.s.}}\Sigma \qquad\text{as~}t\to\infty
		\end{equation*}
		by virtue of the continuity of $R$ and the relation $R(x^\star)=0$. 
\end{proof}

By Lemma~\ref{prop:covlimit}, we have $\frac{1}{t}\sum_{i=0}^{t-1}{\EE}\big[\xi_{i}\xi_{i}^{\top}\,|\, \cF_{i}\big] \xlongrightarrow{\text{a.s.}} \Sigma$ as $t\to\infty$.   Conditions~\ref{cond1polyakstoc} and \ref{cond2polyakstoc} of Assumption~\ref{assmp:polyakstoc} are now established, and Lindeberg's condition \ref{cond3polyakstoc}  of Assumption~\ref{assmp:polyakstoc} holds by item~\ref{it:Lindeberg} of Assumption~\ref{assmp:const}. Now consider the residual vector $\zeta_t$ given by \eqref{eq:extra}. Since $\xs\in\interior\X$ and $x_t \xlongrightarrow{\text{a.s.}} \xs$ as $t\to\infty$, we have $\PP\{\zeta_t = 0 \text{~for all but finitely many~} t\}=1$ and hence $\frac{1}{\sqrt{t}}\sum_{i=0}^{t-1}\|\zeta_i\| \xlongrightarrow{\text{a.s.}} 0$ as $t\to\infty$. Thus, condition~\ref{cond4polyakstoc} of Assumption~\ref{assmp:polyakstoc} holds, and the verification of Assumption~\ref{assmp:polyakstoc} is complete. 

We turn now to Assumption~\ref{assmp:polyak_lin_conv}. The first two conditions of Assumption~\ref{assmp:const} assert that the map $R$ is differentiable on a neighborhood of $\xs\in\interior\cX$. Since $R$ is $(\alpha - \gamma\beta)$-strongly monotone on $\cX$ (Lemma~\ref{lem:lip}), it follows that we have $\langle \nabla R(\xs) v, v \rangle \geq \alpha - \gamma \beta$ for every unit vector $v\in \SSS^{d-1}$ and hence every eigenvalue of $\nabla R(x^\star)$ has real part no smaller than $\alpha - \gamma \beta$. This is the content of the following lemma.

\begin{lemma}[Positivity of the Jacobian]\label{lem:posjac}
	For any point $x\in\interior\cX$ at which $R$ is differentiable, we have 
	\begin{equation}\label{ineq:posform}
		\langle \nabla R(x) v, v \rangle \geq \alpha - \gamma \beta \qquad \text{for all }v\in\mathbf{S}^{d-1}
	\end{equation}
	and hence every eigenvalue of $\nabla R(x)$ has real part no smaller than $\alpha - \gamma \beta$. In particular, $\nabla R(\xs)$ is positively stable. 
\end{lemma}
\begin{proof}
Suppose $R$ is differentiable at $x\in\interior\cX$.  By Lemma~\ref{lem:lip}, $R$ is $(\alpha - \gamma\beta)$-strongly monotone on $\cX$, so \eqref{ineq:posform} follows immediately from the definitions of differentiability and strong monotonicity: for any unit vector $v\in\mathbf{S}^{d-1}$,
$$
\langle \nabla R(x) v, v \rangle = t^{-2}\langle R(x+tv) - R(x), tv \rangle + o(1) \geq \alpha - \gamma\beta + o(1)\qquad\text{as~}t\to 0.
$$
Next, observe that \eqref{ineq:posform} implies $\lambda_{\min}(\nabla R(x) + \nabla R(x)^{\top}) \geq 2(\alpha - \gamma \beta)$. Now let $w \in \C^{d}$ be a normalized eigenvector of $\nabla R(x)$ with associated eigenvalue $\lambda \in\C$.   
	Letting $w^*$ denote conjugate transpose of $w$, we conclude
	$$
	2(\alpha - \gamma \beta) \leq w^*\big(\nabla R(x) + \nabla R(x)^{\top}\big)w  = w^*\nabla R(x)w+ (w^*\nabla R(x)w)^* =  \lambda + \bar{\lambda} = 2(\text{Re}\,\lambda),
	$$
	where the first inequality follows from the Rayleigh-Ritz theorem. Thus, every eigenvalue of $\nabla R(x)$ has real part no smaller than $\alpha - \gamma \beta$. In particular, every eigenvalue of $\nabla R(x)$ has positive real part, that is,  $\nabla R(x)$ is positively stable. The last claim of the lemma follows since $R$ is differentiable at $\xs\in\interior\cX$ by Assumption~\ref{assmp:const}.
\end{proof}

 Next, recall $\eta_t \propto t^{-\nu}$ for some $\nu\in\big(\tfrac{1}{2}, 1\big)$, i.e., there exist a constant $c>0$ and an index $T\geq1$ such that $\eta_t = c t^{-\nu}$ for all $t\geq T$. Clearly $\eta_t = o(1)$. Moreover, 
	$$
	0 \leq \frac{\eta_{t} - \eta_{t+1}}{\eta_{t}^{2}} = \frac{t^{\nu}}{(t+1)^{\nu}}\cdot\frac{(t+1)^{\nu} - t^{\nu}}{c}\leq \frac{(t+1)^{\nu} - t^{\nu}}{c} \qquad\text{for all~}t\geq T,
	$$
and since ${\lim_{t\rightarrow \infty}}\big((t+1)^{r} - t^{r}\big) = 0$ for any $r \in (0,1)$, we conclude $$\frac{\eta_{t}-\eta_{t+1}}{\eta_{t}} = o(\eta_t).$$ 
This establishes condition \ref{assmp:polyak_step_size} of Assumption~\ref{assmp:polyak_lin_conv}. 

Finally, by Proposition~\ref{prop:asconv}, we have $x_t \xlongrightarrow{\text{a.s.}} x^\star$ and hence $\bar{x}_t \xlongrightarrow{\text{a.s.}} x^\star$ as $t\to\infty$, and $\sum_{t=1}^{\infty}t^{-1/2}\|x_{t} - \xs\|^2 < \infty$ almost surely, which by Kronecker's lemma \cite[see][Lemma~2.5.9]{durrett2019_prob} implies $\frac{1}{\sqrt{t}}\sum_{i=0}^{t-1}\|x_i - x^\star\|^2 \xlongrightarrow{\text{a.s.}} 0$ as $t\to\infty$; on the other hand,
$$ R(x) - \nabla R(\xs)(x-\xs) = O(\|x-\xs\|^{2})\qquad\text{as } x\rightarrow \xs$$
since $\nabla R$ is Lipschitz continuous on a neighborhood of $\xs$ and $R(\xs)=0$. Therefore
\begin{equation}
    	\frac{1}{\sqrt{t}}\sum_{i=0}^{t-1} \|R(x_i) - \nabla R(\xs)(x_i - \xs)\| \xlongrightarrow{\text{a.s}} 0 \qquad\text{as~}t\to\infty.
    \end{equation}
Since $\nabla R(\xs)$ is positively stable (Lemma~\ref{lem:posjac}), this concludes the verification of Assumption~\ref{assmp:polyak_lin_conv}. An application of Theorem~\ref{thm:polyak} to the process \eqref{eq:process} completes the proof of Theorem~\ref{thm:anperf}.
 
\section{Asymptotic Optimality}
\label{sec:optimality}

In this section, we establish the local asymptotic optimality of Algorithm~\ref{alg:1}.
Our result builds on classical ideas from H\'ajek and Le Cam \citep{le2000asymptotics,van2000asymptotic} on lower bounds for statistical estimation and the more recent work of \citet{duchi2021asymptotic} on asymptotic optimality of the stochastic gradient method. Throughout, we fix a base distribution map $\cD \colon \X \rightarrow P_{1}(\cZ)$ and a map $G\colon\cX\times \mathcal{Z}\to\R^d$ satisfying Assumptions~\ref{assmp:lipdist} and \ref{assmp:lipgrad}. We will be concerned with evaluating the performance of estimation procedures for finding the equilibrium points induced by an adversarially-chosen sequence of small perturbations $\cD'$ of $\cD$, where each $\cD'$ is ``admissible'' in the following sense. 

\begin{definition}[Admissible distribution map]
A distribution map $\cD' \colon \X \rightarrow P_{1}(\cZ)$ is \emph{admissible} if Assumptions~\ref{assmp:lipdist} and \ref{assmp:lipgrad} hold with $\cD'$ in place of $\cD$ (allowing for different constants $\gamma'$, $\bar{L}'$, $\alpha'$ in place of $\gamma$, $\bar{L}$, $\alpha$). For each admissible distribution map $\cD' \colon \X \rightarrow P_{1}(\cZ)$, the corresponding equilibrium point is denoted by $\xs_{\cD'}$.
\end{definition}

Let us start with some intuition before delving into the details.
Roughly speaking, we aim to show that the asymptotic covariance of the normalized error $\sqrt{t}(\bar{x}_t - x^{\star})$ in Theorem~\ref{thm:main_informal} is ``optimal'' among all algorithms for finding equilibrium points. To capture the notion of optimal covariance, a standard approach is to probe the normalized error with nonnegative ``loss'' functions $\cL\colon \RR^{d} \rightarrow [0,\infty)$ that are symmetric, quasiconvex, and lower semicontinuous, interpreting the concentration of $X_{1}\sim\mathcal{P}_1$ to be ``better'' than that of $X_{2}\sim\mathcal{P}_2$ if the inequality ${\EE}[\cL(X_1)] \leq {\EE}[\cL(X_2)]$ holds for all such $\cL$; if $X_1$ and $X_2$ are square-integrable, this relation clearly entails the positive semidefinite ordering $\mathbb{E}[X_1X_1^{\top}]\preceq \mathbb{E}[X_2X_2^{\top}]$ of second-moment matrices.\footnote{Note $\mathbb{E}[X_1X_1^{\top}]\preceq \mathbb{E}[X_2X_2^{\top}]$ if and only if ${\EE}[\cL_{u}(X_{1})] \leq {\EE}[\cL_{u}(X_{2})]$ for all $u\in\R^{d}$, where $\cL_{u}\colon \RR^{d} \rightarrow [0,\infty)$ is given by $\cL_u(x) = (u^{\top}x)^{2}= u^{\top}(x x^{\top})u$.}

Using this idea, we consider a local asymptotic notion of minimax risk that evaluates the performance of an arbitrary sequence of estimators on problems close to the one we wish to solve.
Since our target problem models stochasticity using the base distribution map $\D$, we will parameterize close problems through perturbations of $\cD$. More concretely, we will carefully construct for each $u\in\R^d$ a perturbation $\cD^u$ of $\cD$ such that, as $u\to0$, the distribution map $\cD^u$ is admissible with equilibrium point $\xs_u:= \xs_{\cD^u}$ near $x^{\star}$.
The primary goal of this section is to show that if $\widehat x_{k}\colon\Z^k\rightarrow\R^d$ is an arbitrary sequence of estimators (i.e., $\widehat x_{k}$ is a measurable function of $k$ observed samples) and $\cL\colon \RR^{d} \rightarrow [0,\infty)$ is symmetric, quasiconvex, and lower semicontinuous, then the following lower bound holds:
\begin{equation}
  \label{eq:intuition}
\underbrace{\sup_{\cI \subset \R^d,\, |\cI| < \infty}\liminf_{k \rightarrow \infty} \max_{u\in \cI} \,{\EE}_{P_{k,{\smash{u/\sqrt{k}}}}}\big[\cL \big(\sqrt{k} \big(\widehat x_{k} - \xs_{\smash{u/\sqrt{k}}}\big)\big)\big]}_{\text{local asymptotic minimax risk}} \geq {\EE}[\cL(Z)],
\end{equation}
where $P_{k,v} = \bigotimes_{i=0}^{k-1}\cD_{\smash{\tilde{x}_{i}}}^{v}$ denotes the distribution on $\Z^k$ induced by $\cD^{v}$ along an arbitrary ``dynamic estimation procedure'' and $Z\sim \mathsf{N}(0, W^{-1}\Sigma W^{-\top})$ with $\Sigma$ and $W$ as in Theorem~\ref{thm:main_informal}. 

The lower bound \eqref{eq:intuition} provides a precise expression of the optimality of the covariance of the limit distribution $\mathsf{N}(0, W^{-1} \Sigma W^{-\top})$. Moreover, we will show that equality is achieved in \eqref{eq:intuition} upon specializing to the dynamic estimation procedure corresponding to Algorithm~\ref{alg:1} with step sizes $\eta_k \propto k^{-\nu}$ (as in Theorem~\ref{thm:main_informal}) and taking $\widehat{x}_k$ to be given by the average iterates $\bar{x}_k=\frac{1}{k}\sum_{i=1}^{k}x_{i}$, provided $\cL$ is bounded and continuous.

To formalize the preceding discussion, we begin by defining the dynamic estimation procedure used to define the sequence of distributions $P_{k,v} = \bigotimes_{i=0}^{k-1}\cD_{\smash{\tilde{x}_{i}}}^{v}$ appearing in \eqref{eq:intuition}.

 \begin{definition}[Dynamic estimation procedure]\label{ass:beautiful-algo}
 A \emph{dynamic estimation procedure} is a sequence of measurable maps $\mathcal{A}_k\colon \mathcal{Z}^{k}\times\cX^{k}\to \mathcal{X}$ such that for any initial point $ \tilde{x}_0\in \mathcal{X}$, the sequence of estimators $\tilde{x}_k\colon\Z^k\to\cX$ defined recursively by
 \begin{equation}
 	\label{eq:Palgo}
\tilde x_{k} = \cA_{k}(z_{0}, \dots, z_{k-1},  \tilde x_{0}, \dots,  \tilde x_{k-1})\end{equation}
satisfies
 \begin{align*}
		\tilde x_{k} \xlongrightarrow{\text{a.s.}} \xs \qquad\text{as~}k\to\infty
	\end{align*}
 with respect to the distribution $\bigotimes_{i=0}^{\infty}\cD_{\tilde{x}_{i}}$ on $\cZ^{\NN}$.
 \end{definition}

Thus, the dynamic estimation procedure $\cA_k$ plays the role of the decision-maker that selects the sequence of points at which to query a given distribution map; this generalizes the classical static setting wherein $z_0,z_1,\ldots$ are i.i.d.\ samples drawn from a fixed distribution. In the dynamic setting, we are concerned with algorithms for estimating the equilibrium point $x^\star$, so it is sensible to require that the iterates $\tilde{x}_k$ produced by the recursion \eqref{eq:Palgo} with $(z_{0}, \ldots, z_{k-1})\sim \bigotimes_{i=0}^{k-1}\cD_{\tilde{x}_{i}}$ converge almost surely to $x^{\star}$ as $k\to\infty$. Importantly, $\cA_k$ is assumed to be a deterministic function of its arguments. 
For example, the sequence of maps $\cA_k$ corresponding to
Algorithm~\ref{alg:1}, i.e.,
\begin{equation}\label{eq:sfb_dyn}
	\cA_{k+1}(z_{0}, \dots, z_{k},  x_{0}, \dots,  x_{k}) = \proj_{\cX}\big({x}_{k} - \eta_{k}G({x}_{k}, z_{k})\big)\qquad\text{for all~}k\geq 0,
\end{equation}
is a dynamic estimation procedure under the assumptions of  Proposition~\ref{prop:asconv};
although this particular map $\cA_{k+1}$ depends directly only on the last iterate $x_{k}$ and the last sample $z_{k}$, general dynamic estimation procedures may depend directly on any number of the previous samples and iterates.

We turn now to defining the perturbations $\cD^u$ of $\cD$ used to encode difficult instances near the target problem.

\subsection{Tilted Distributions}

Following \cite{duchi2021asymptotic} and \citet[Section~25.3]{van2000asymptotic}, for each distribution $\cD_{x} := \mathcal{D}(x)$ we will construct ``tilt perturbations'' $\cD_{x}^{u}$ parameterized by $u\in\R^d$. 
Henceforth, we fix an arbitrary nondecreasing $C^{3}$-smooth  function $h \colon \RR \rightarrow [-1, 1]$ such that the first three derivatives of $h$ are bounded and $h(t) = t$ for all $t$ in a neighborhood of zero. For each $x\in\cX$ and $u\in\R^d$, the tilted distribution $\cD_{x}^{u}\in P_1(\cZ)$ is defined by setting
\begin{equation}
  \label{eq:perturbation-of-the-force}
\cD^u_{x}(E) := \int_{E}\frac{1+h(u^{\top} g_x(z))}{C^{u}_{x}}\,d\cD_{x}(z)\qquad\text{for all~}E\in\cB(\cZ),
\end{equation}
where $g_x\colon\cZ\rightarrow\R^d$ is $\cD_x$-integrable with ${\Expect}_{z\sim\D_x}[g_x(z)] = 0$ and $C_{x}^{u}$ is the normalizing constant $C_{x}^{u} = 1 + {\EE}_{z\sim \D_x} [h(u^\top g_{x}(z))]$. The resulting parametric statistical model $\{\cD_x^u \mid u\in\R^d\}$ has score function $g_x$ at zero, i.e.,
\begin{equation*}
	\nabla_u\bigg(\!\log\frac{d\cD_{x}^{u}}{d\cD_{x}}(z)\!\bigg)\bigg|_{u=0} = g_{x}(z).
\end{equation*}
Thus, the collection of functions $\{u^{\top}g_x\colon\cZ\rightarrow\R \,|\, u\in\R^d\}$ forms a ``tangent space'' of the model $\{\cD_x^u \,|\, u\in\R^d\}$ at zero \citep[see][Example~25.15]{van2000asymptotic}. 
In the context of establishing the asymptotic optimality of Algorithm~\ref{alg:1}, we will see that the relevant score function is the noise  $\xi_{x}(z) = G(x, z) - G_{x}(x)$.

To guarantee that the tilted distribution map given by $x\mapsto\cD_x^u$ is admissible for small $u$, we require additional conditions on the base distribution map $\mathcal{D}$, the map $G$, and the function $g\colon\cX\times\cZ\rightarrow\R^d$ given by $g(x,z)=g_x(z)$. Despite being technical, these conditions (given in Assumption~\ref{ass:extra} and Definition~\ref{def:G_n} below) are mild and essentially amount to quantifying the smoothness of $\D$, $G$, and $g$. To quantify the smoothness of $\cD$, we will make use of a certain set of test functions to be integrated against each distribution $\mathcal{D}_{x}$.

\begin{definition}[Test functions]
Given a compact metric space $\mathcal{K}$, we let
$\mathcal{T}(\mathcal{K},\mathcal{Z})$ consist of all bounded measurable functions $\phi\colon\mathcal{K}\times\mathcal{Z}\rightarrow\R$ admitting a constant $L_{\phi}$ such that each section $\phi(\cdot, z)$ is $L_{\phi}$-Lipschitz on $\mathcal{K}$. For any $\phi\in\mathcal{T}(\mathcal{K},\mathcal{Z})$, we set $M_{\phi}:=\sup|\phi|$.
\end{definition}

\begin{assumption}\label{ass:extra}
  The following three conditions hold.
  \begin{enumerate}[label=(\roman*)]
    \item \textbf{(Compactness)} The set $\cX$ is compact, and the set $\cZ$ is bounded.
\item \textbf{(Smooth distribution map)} \label{ass:extra-smooth}
        There exists an increasing function $\vartheta\colon[0,\infty)\to[0,\infty)$ such that for every compact metric space $\cK$ and test function $\phi\in \mathcal{T}(\mathcal{K},\mathcal{Z})$, the function
    $$
    x \mapsto \mathop{\EE}_{z \sim \D_{x}}{\phi(y,z)}
    $$
    is $C^1$-smooth on $\cX$ for each $y\in\mathcal{K}$ and the map
    $$
    (x, y) \mapsto \nabla_{x}\Big(\mathop{\EE}_{z \sim \D_{x}}{\phi(y,z)}\Big) $$
    is $\vartheta({L}_{\phi} + M_{\phi})$-Lipschitz on $\mathcal{X}\times\mathcal{K}$.\footnote{The same conclusion then holds for all measurable maps $\phi\colon\cK\times\cZ\rightarrow\R^n$ with $n\in\NN$, $L_{\phi}:=\sup_{z}\text{Lip}\big(\phi(\cdot, z)\big)<\infty$, and $M_\phi := \sup \|\phi\|<\infty$.}
\item \textbf{(Lipschitz Jacobian)} There exist a measurable function $\Lambda\colon\cZ\rightarrow[0,\infty)$ and constants $\bar{\Lambda}, \beta' \geq 0$ such that for every $z \in \cZ$ and $x\in\X$, the section $G(\cdot, z)$ is $\Lambda(z)$-smooth on $\X$ with ${\EE}_{z \sim \D_{x}}[\Lambda(z)] \leq \bar{\Lambda}$, and the section $\nabla_x G(x,\cdot)$ is $\beta'$-Lipschitz on $\Z$.
  \end{enumerate}
\end{assumption}

The first condition is imposed mainly for simplicity.
The last two smoothness conditions are required in our arguments to apply  dominated convergence and implicit function theorems.
To illustrate with a concrete example, suppose that there exists a Borel probability measure $\mu$ on $\Z$ such that $\D_x \ll \mu$ for all $x\in\cX$, and consider the density $p(x,z) = \frac{d\cD(x)}{d\mu}(z)$. If there exist constants $\Lambda_p, L_p\geq0$ such that each section $p(\cdot, z)$ is $\Lambda_p$-smooth and $\sup_{x,z}\|\nabla_x p(x,z)\|\leq L_p$, then item \ref{ass:extra-smooth} of Assumption~\ref{ass:extra} holds with $\vartheta(s) = \max\{\Lambda_p, L_p\} \cdot s$.

Next, we specify the collection of functions $g\colon\cX\times\cZ\rightarrow\R^d$ satisfying the regularity conditions we require. 
\begin{definition}[Score functions]\label{def:G_n}
  Let  $\cG$ consist of all measurable functions $g\colon \cX \times \cZ\rightarrow\RR^{d}$ satisfying the following three conditions.
  \begin{enumerate}[label=(\roman*)]
    \item \textbf{(Lipschitz continuity)}
    There exists a constant $\beta_g\geq0$ such that for every $x\in\X$, the section $g(x,\cdot)$ is $\beta_g$-Lipschitz on $\cZ$.
    \item \textbf{(Unbiasedness)} ${\Expect}_{z\sim\D_{x}}[g(x,z)] = 0$ for all $x\in\X$.\item \textbf{(Smoothness)} There exist a measurable function $\Lambda_g\colon\cZ\rightarrow[0,\infty)$ and constants ${\bar{\Lambda}_g, \beta'_g \geq 0}$ such that for every $z \in \cZ$ and $x\in\X$, the section $g(\cdot, z)$ is $\Lambda_g(z)$-smooth on $\X$ with ${\Expect}_{z\sim\D_{x}}[\Lambda_g(z)] \leq \bar{\Lambda}_g$, and the section $\nabla_x g(x,\cdot)$ is $\beta'_g$-Lipschitz on $\Z$.
  \end{enumerate}
\end{definition}

\noindent For our purposes, the most important map in $\mathcal{G}$ will be the noise
\begin{equation}\label{eq:noise}
	\xi(x,z):= G(x, z) - G_{x}(x), 
\end{equation}
which belongs to $\mathcal{G}$ as a consequence of Assumptions~\ref{assmp:lipgrad} and \ref{ass:extra} and Lemma~\ref{lem:oh-the-smoothness}. 

Henceforth, we fix $g\in\cG$ and take $g_x(z)=g(x,z)$ in \eqref{eq:perturbation-of-the-force}, thereby defining the tilted distribution map $\cD^u \colon \X \rightarrow P_{1}(\cZ)$ given by $x\mapsto\cD_x^u$.
The following lemma guarantees that if Assumptions~\ref{assmp:lipdist}, \ref{assmp:lipgrad}, and \ref{ass:extra} hold, then $\cD^{u}$ is admissible for all $u$ in a neighborhood $\cU$ of zero; the proof, which we defer to Section~\ref{sec:proof_of_admis}, provides constants $\gamma^u$, $\bar L^u$, $\alpha^u$ that fulfill Assumptions~\ref{assmp:lipdist} and \ref{assmp:lipgrad} for $\mathcal{D}^u$ and deviate from $\gamma$, $\bar L$, $\alpha$ by $O(\|u\|)$ as $u\rightarrow 0$. 
\begin{lemma}[Tilted distributions are admissible]\label{lem:spain}
  Suppose that Assumptions~\ref{assmp:lipdist}, \ref{assmp:lipgrad}, and \ref{ass:extra} hold. Then there exists a neighborhood $\cU$ of zero such that for all $u\in \cU$, the map $\cD^{u}$ is admissible.
\end{lemma}

Thus, the solutions $x^{\star}_{\smash{\cD^{u}}}$ are well defined for small enough $u$. For ease of notation, we set
\begin{equation*}
		x^{\star}_u := x^{\star}_{\smash{\cD^{u}}}
\end{equation*}
for each $u$ in the neighborhood $\cU$. With the preceding definitions in place, we are now ready to state the main result of this section.

\begin{theorem}[Asymptotic optimality]\label{thm:optimality}
 Suppose that Assumptions~\ref{assmp:lipdist}, \ref{assmp:lipgrad}, and \ref{ass:extra} hold with the equilibrium point $\xs$ lying in the interior of $\cX$, and suppose $g=\xi$.\footnote{Recall that $g$ is the score function used to parameterize the perturbed distributions \eqref{eq:perturbation-of-the-force} and $\xi$ is the noise \eqref{eq:noise}.} Let $\mathcal{A}_k\colon \mathcal{Z}^{k}\times\cX^{k}\to \mathcal{X}$ be a dynamic estimation procedure, fix an initial point $\tilde{x}_0\in\cX$, and for each $u\in\R^d$, let $P_{k,u} = \bigotimes_{i=0}^{k-1}\cD_{\smash{\tilde{x}_{i}}}^{u}$ denote the distribution on $\Z^k$ induced by $\cD^u$ along the sequence \eqref{eq:Palgo}.
 Let $\widehat{x}_k\colon \cZ^{k}\rightarrow \RR^{d}$ be any sequence of estimators, and let $\mathcal{L}\colon \RR^{d} \rightarrow [0,\infty)$ be symmetric, quasiconvex, and lower semicontinuous. 
  \begin{enumerate}[label=(\roman*)]
    \item (\textbf{Lower bound}) The following lower bound on the local asymptotic minimax risk holds:
    	 \begin{equation}\label{eq:minimaxlb}
 		 	\sup_{\cI \subset \R^d,\, |\cI| < \infty}\liminf_{k \rightarrow \infty} \max_{u\in \cI} \,{\EE}_{P_{k,{\smash{u/\sqrt{k}}}}}\big[\cL \big(\sqrt{k} \big(\widehat x_{k} - \xs_{\smash{u/\sqrt{k}}}\big)\big)\big] \geq {\EE}[\cL(Z)],
  		\end{equation}
  		where $Z\sim \mathsf{N}\big(0, W^{-1} \Sigma W^{-\top}\big)$ with
  		\begin{equation*}\label{eq:cov-matrices}
   			 \Sigma = \underset{z\sim\D_{\xs}}{\Expect}\!\big[G(\xs, z)G(\xs, z)^\top\big] \quad \text{and} \quad W={\underset{z\sim\D_{x^{\star}}}{\Expect}\![\nabla_x G(x^{\star},z)]}+{\frac{d}{dy} \underset{z\sim\D_y}{\Expect}[G(\xs, z)]\Big|_{y=x^{\star}}}\!.
 	   \end{equation*}

    \item (\textbf{Tightness of SFB}) \label{item:tightness}
   	 If $\cA_k$ is the dynamic estimation procedure \eqref{eq:sfb_dyn} corresponding to Algorithm~\ref{alg:1} with initial point $x_0 = \tilde{x}_0$ and step sizes $\eta_{k} \propto k^{-\nu}$ for some $\nu\in\big(\tfrac{1}{2}, 1\big)$, and if the sequence of estimators $\widehat{x}_k$ is given by the average iterates $\bar{x}_k=\frac{1}{k}\sum_{i=1}^{k}x_{i}$, then equality holds in \eqref{eq:minimaxlb} whenever $\cL$ is bounded and continuous.
  \end{enumerate}
 
\end{theorem}

Most importantly, observe that the distribution of $Z$ in Theorem~\ref{thm:optimality} coincides with the asymptotic distribution of $\sqrt{t}(\bar{x}_t - x^{\star})$ in Theorem~\ref{thm:anperf}, thereby justifying asymptotic optimality of the stochastic forward-backward method (Algorithm~\ref{alg:1}). The lower bound in Theorem~\ref{thm:optimality} provides a decision-dependent analogue of the asymptotic optimality result of \citet[Theorem~1]{duchi2021asymptotic} when the minimizer lies in the interior of $\mathcal{X}$. We believe that all the techniques developed here can be adapted to the more general setting where $x^\star$ may lie on the boundary of $\cX$; since this generalization would require a significant technical overhead, we do not pursue it here. Taking the average of the SFB iterates to obtain an asymptotically optimal estimator as in item \ref{item:tightness} of Theorem~\ref{thm:optimality} is important: even in the static case, the last iterate is known to be asymptotically suboptimal \citep{fabian1968asymptotic}.

\begin{remark}[Convergence of equilibria and tilted distributions]
	In the setting of Theorem~\ref{thm:optimality}, one can show that the following approximations hold:
	\begin{equation}\label{eq:eq_conv}
		\|x^\star_{u}-x^\star\| = O(\|u\|) \qquad\text{as~}u\rightarrow0
	\end{equation} 
	and
	\begin{equation}\label{eq:W1_conv}
		\sup_{x\in\cX}W_1(\cD_x^u,\cD_x) = O(\|u\|)\qquad\text{as~}u\to0.
	\end{equation}
	Indeed, we will prove in the forthcoming Lemma~\ref{lem:smooth-kappa} that the map $u\mapsto x^\star_{u}$ is $C^1$-smooth on a neighborhood of zero, which implies \eqref{eq:eq_conv} by the mean value theorem.
	
	To verify the approximation \eqref{eq:W1_conv}, note first that for any $1$-Lipschitz function $\phi \in \text{Lip}_1(\Z)$, the translate $\bar{\phi}= \phi - \inf\phi$ is bounded by $\diam(\cZ)$, and
  \begin{equation}\label{eq:W1_conv_1}
   \mathop{\EE}_{z \sim \cD_{x}^{u}} [\phi(z)] - \mathop{\EE}_{z \sim \cD_{x}} [\phi(z)] = \frac{1}{C_{x}^{u}}\,  {\mathop{\EE}_{z \sim \cD_{x}}}\!\big[\bar{\phi}(z)\big(1+ h\big(u^{\top} g_{x}(z)\big)\big)\big] - \mathop{\EE}_{z \sim \cD_{x}} [\bar\phi(z)]	
  \end{equation}
  for any $x\in\cX$ and $u\in\R^d$. Further, Lemma~\ref{lem: uniform C(x,u)} shows $\sup_{x\in\cX} {\mathop{\EE}_{z \sim \cD_{x}}}|h(u^{\top}g_x(z))|=O(\|u\|)$ for all $u\in\R^d$ and $\sup_{x\in\cX}\frac{1}{C_x^u} = 1 + O(\|u\|^3)$ as $u\to 0$, so \eqref{eq:W1_conv_1} implies
\begin{equation*}
	\sup_{x\in\cX}W_1(\cD_x^u,\cD_x) \leq \diam(\cZ)\cdot\sup_{x\in\cX} {\mathop{\EE}_{z \sim \cD_{x}}}|h(u^{\top}g_x(z))| + O(\|u\|^3) = O(\|u\|) \qquad\text{as~}u\rightarrow0,
\end{equation*}
which follows from definition \eqref{eq:W1}.
This establishes \eqref{eq:W1_conv}, which in particular asserts that the collection of tilted distribution maps $\{\cD^{u}\}_{u\in\R^d}$ converges uniformly to $\cD$ as $u \to 0$.
\end{remark}

\begin{remark}[$f$-divergence of tilted distributions] We can also quantify the variation of the tilted distribution map $\D^u$ from the base distribution map $\cD$ via an $f$-divergence. Let $f\colon (0,\infty) \rightarrow \RR$ be any convex function that is $C^{2,1}$-smooth around $t=1$ and satisfies $f(1) = 0.$  Then for any distribution map $\cD' \colon \X \rightarrow P_{1}(\cZ)$, we may define the similarity measure
$$
\Delta_{f}(\D' \!\parallel\! \D) := \sup_{x\in \cX} \Delta_{f}(\D'_x \!\parallel\! \D_x),
$$
where $\Delta_{f}(\D'_x \!\parallel\! \D_x)$ denotes the usual $f$-divergence of $\D'_x$ from $\D_x$ given by \eqref{eq:fdivergence}.\footnote{Examples of $f$-divergences include the $\chi^2$-divergence, KL-divergence, and squared Hellinger distance.}  The following approximation holds:
\begin{equation}\label{eq:f_div_ineq}
	\Delta_{f}(\D^{u}\!\parallel\!\D) = O(\|u\|^2)\qquad\text{as~}u\to0.
\end{equation}
To verify \eqref{eq:f_div_ineq}, observe that for all sufficiently small $u\in\R^d$ and all $x\in\cX$, we have
\begin{align}
  \Delta_{f}(\cD_{x}^{u} \!\parallel\! \D_{x}) &= \int f\bigg(\frac{1+h(u^{\top}g_{x}(z))}{C_{x}^{u}}\bigg) \, d\mathcal{D}_{x}(z) \notag \\
  &= \int f\bigg(\frac{1+u^{\top}g_{x}(z)}{C_{x}^{u}}\bigg)\, d\mathcal{D}_{x}(z)\label{eqn:bounded}\\
  &= \frac{f''(1)}{2} u^{\top} \bigg(\mathop{\EE}_{z\sim \D_{x}}g_{x}(z)g_{x}(z)^{\top}\bigg)u + r_{x}(u),\label{eqn:expand}
\end{align}
where $\sup_{x \in \cX} |r_{x}(u)| = o(\|u\|^{2})$ as $u\to 0$. The equality \eqref{eqn:bounded} holds for all sufficiently small $u\in\R^d$ and all $x\in\cX$ because $g$ is uniformly bounded over $\mathcal{X}\times \mathcal{Z}$ (see Lemma~\ref{lem:ultra-bounded}) and $h(t) = t$ for all $t$ in a neighborhood of zero. The equality \eqref{eqn:expand} follows from a second-order approximation and the dominated convergence theorem; we defer the details to Lemma~\ref{lem:yet-another-bound}. Another appeal to the uniform boundedness of $g$ yields a constant $a \geq0$ for which $\sup_{x \in \cX} \|{\EE}_{z\sim \D_{x}} [g_{x}(z)g_{x}(z)^{\top}]\|_{\rm op} \leq a$. Further, given any $b>0$, there is a neighborhood $U$ of zero such that $\sup_{x \in \cX,\, u \in U} \|u\|^{-2} |r_{x}(u)| \leq b.$ Therefore  $ \Delta_{f}(\cD^{u}\!\parallel \!\D) \leq (\frac{a}{2} f''(1) + b)\|u\|^2$ for all sufficiently small $u\in\R^d$.

In light of \eqref{eq:f_div_ineq}, one may obtain from \eqref{eq:minimaxlb} a less refined local asymptotic minimax bound in terms of the ``admissible neighborhoods'' $B_{f}(\varepsilon)$ of $\cD$ defined for each $\varepsilon>0$ by  
\begin{equation*}
	B_{f}(\varepsilon):= \big\{\cD' \colon \X \rightarrow P_{1}(\cZ) \mid \cD'\text{~is admissible and~}\Delta_{f}(\D'\!\parallel\!\D)\leq \varepsilon \big\},
\end{equation*}
namely,
\begin{equation}\label{eq:minimaxlb_loose}
	\lim_{c\rightarrow \infty}\,\liminf_{k \rightarrow \infty} \sup_{\cD' \in B_{f}(c/k)} {\EE}_{P_{k}'}\big[\mathcal{L}\big(\sqrt{k} ( {\widehat x}_{k} - \xs_{\mathcal{D}'})\big)\big] \geq {\EE}[\mathcal{L}(Z)],
\end{equation}
where $P_{k}' = \bigotimes_{i=0}^{k-1}\cD_{\smash{\tilde{x}_{i}}}'$ denotes the distribution on $\Z^k$ induced by $\cD'$ along the sequence \eqref{eq:Palgo}.
Indeed, \eqref{eq:f_div_ineq} facilitates the elementary estimation
 \begin{align*}
&\lim_{c\rightarrow \infty}\,\liminf_{k \rightarrow \infty} \sup_{\cD' \in B_{f}(c/k)} {\EE}_{P_{k}'}\big[\mathcal{L}\big(\sqrt{k} ( {\widehat x}_{k} - \xs_{\mathcal{D}'})\big)\big] \\
&\hspace{1.5in}\geq \lim_{c\rightarrow \infty}\,\liminf_{k \rightarrow \infty} \sup_{\|u\|\leq c/\sqrt{k}} {\EE}_{P_{k,u}}\!\big[\cL\big(\sqrt{k} (\widehat{x}_{k} - \xs_{u})\big)\big] \\
&\hspace{1.5in}\geq \sup_{\cI \subset \R^d,\, |\cI| < \infty}\liminf_{k \rightarrow \infty} \max_{u\in \cI}\, {\EE}_{P_{k,{\smash{u/\sqrt{k}}}}}\big[\cL \big(\sqrt{k} \big(\widehat x_{k} - \xs_{\smash{u/\sqrt{k}}}\big)\big)\big]
  \end{align*}
  and hence \eqref{eq:minimaxlb} implies \eqref{eq:minimaxlb_loose}.
\end{remark}

\subsection{Proof of Theorem \ref{thm:optimality}}

The proof of Theorem~\ref{thm:optimality} is based on the classical H\'ajek-Le Cam minimax theorem. To state this result, we require several standard definitions from statistics. In the sequel, we let $\{Q_{k,u} \,|\, u\in \R^d\}$ denote a sequence of parametric statistical models, where $Q_{k,u}$ is a probability measure on $(\Omega_k, \cS_k)$ such that $Q_{k,u} \ll Q_{k,0}$ for each $k\in\N$ and $u\in \R^d$; following \cite{vanderVaart1996}, we write either $X_k \overset{\smash{u}}{\leadsto} X $ or $X_k \overset{\smash{u}}{\leadsto} \mathsf{D}$ to indicate that a sequence of random vectors $X_k\colon\Omega_k\rightarrow\R^m$ converges in distribution to a random vector $X\sim\mathsf{D}$  with respect to $Q_{k,u}$, i.e., $\lim_{k\rightarrow\infty}{\EE}_{\smash{Q_{k,u}}} [\varphi(X_k)] = {\EE}_{X\sim\mathsf{D}}[\varphi(X)]$ for every bounded continuous function $\varphi\colon\R^m\rightarrow\R$.

\begin{definition}[Locally asymptotically normal] The sequence $\{Q_{k,u} \!\mid\! u\in \R^d\}$ is \emph{locally asymptotically normal (LAN) with precision} $V\!$ \emph{at zero} if there exist a sequence of random vectors $Z_{k}\colon\Omega_k\rightarrow\R^d$ and a positive semidefinite matrix $V\in\R^{d\times d}$ such that $Z_{k} \overset{\smash{0}}{\leadsto}\mathsf{N}(0, V)$ and, for each $u\in\R^d$, 
  \begin{equation}\label{eq:LAN}
  \log \frac{dQ_{k, u}}{dQ_{k, 0}} =  u^{\top}Z_{k} - \frac{1}{2} u^{\top}V u + o_{Q_{k,0}}(1).
  \end{equation}
\end{definition}

\begin{definition}[Regular mapping sequence]
A sequence of mappings $\Gamma_{k}\colon \R^d \rightarrow \RR^{n}$ is \emph{regular with derivative} $\dot{\Gamma}$ \emph{at zero} if there exists a matrix $\dot{\Gamma} \in \RR^{n \times d}$ satisfying
  $$\lim_{k\rightarrow \infty}\sqrt{k} \big( \Gamma_{k}(u) - \Gamma_{k}(0)\big) = \dot{\Gamma} u \quad \text{for all~} u \in \R^d.$$
\end{definition}

 \begin{example}\label{example:regmapseq}
 	Given any $\psi\colon\R^d\to\R^n$ such that $\psi$ is differentiable at zero, the induced mapping sequence $\Gamma_{k}\colon \R^d \rightarrow \RR^{n}$ given by $\Gamma_k(u) = \psi(u/\sqrt{k})$ is clearly regular with derivative $\dot{\Gamma} = \nabla\psi(0)$ at zero. We will see that this construction provides the relevant regular mapping sequence for establishing Theorem~\ref{thm:optimality} by taking $\psi(u) = x^{\star}_{u}$ on a neighborhood of zero.
 \end{example}
 
 Equipped with the preceding definitions, we are ready to state the following version of the H\'ajek-Le Cam minimax theorem, which appears for example Lemma 8.2 of \citet{duchi2021asymptotic} and Theorem 3.11.5 of \citet{vanderVaart1996}.

\begin{theorem}[Local asymptotic minimax bound]\label{thm:localminimax}
  Let $\{Q_{k,u} \!\mid\! u\in \R^d\}$ be locally asymptotically normal with precision $V$ at zero, $\Gamma_{k}\colon \R^d \rightarrow \RR^{n}$ be a regular mapping sequence with derivative $\dot{\Gamma}$ at zero, and $\cL\colon \RR^{n} \rightarrow [0,\infty)$ be symmetric, quasiconvex, and lower semicontinuous. Then, for any sequence of estimators $T_{k}\colon \Omega_{k} \rightarrow \RR^{n}$, we have
  \begin{equation}\label{eq:lecam}
  \sup_{\cI \subset \R^d,\, |\cI| < \infty}\liminf_{k \rightarrow \infty}\, \max_{u \in \cI}\, {\EE}_{Q_{k, u}}\!\big[\cL \big(\sqrt{k}\big(T_{k} - \Gamma_{k}(u)\big)\big)\big] \geq {\EE}[\cL(Z)],
  \end{equation}
  where $Z \sim \mathsf{N}(0, \dot{\Gamma} (V + \lambda I)^{{-1}}\dot{\Gamma}^{\top})$ for any $\lambda>0$; if $V$ is invertible, then \eqref{eq:lecam} also holds with $Z \sim \mathsf{N}(0, \dot{\Gamma} V^{{-1}}\dot{\Gamma}^{\top})$.
  \end{theorem}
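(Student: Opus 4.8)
The plan is to follow the classical route of Le Cam--Yang \citep[Theorem 6.6.2]{le2000asymptotics} and van der Vaart--Wellner \citep[Theorem 3.11.5]{vanderVaart1996}: transfer the problem to a limiting Gaussian shift experiment and then run a Bayesian argument built on Anderson's lemma. Two preliminary reductions streamline this. First, it suffices to prove the bound when $\cL$ is bounded, since $\cL_M:=\min\{\cL,M\}$ is again symmetric, quasiconvex and lower semicontinuous and $\EE[\cL_M(Z)]\uparrow\EE[\cL(Z)]$ by monotone convergence. Second, it suffices to handle invertible $V$: given singular $V$ and $\lambda>0$, enrich each space to $\Omega_k\times\R^d$ and replace $Q_{k,u}$ by $Q_{k,u}\otimes\mathsf{N}(u,\lambda^{-1}I)$. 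The extra block is an exact Gaussian shift experiment with precision $\lambda I$, so the enriched family is locally asymptotically normal with precision $V+\lambda I\succ0$, and every estimator $T_k$ on $\Omega_k$ is an estimator on the enriched space with identical risk. Applying the invertible case to the enriched family yields exactly the claimed conclusion with $Z\sim\mathsf{N}\big(0,D(V+\lambda I)^{-1}D^\top\big)$, for every $\lambda>0$.

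Assume now $V\succ0$ and $\cL$ bounded, and fix a finite $U_0\subset U$. The expansion \eqref{eq:LAN} forces the log-likelihood ratios $\big(\log\tfrac{dQ_{k,u}}{dQ_{k,0}}\big)_{u\in U_0}$ to converge, under $Q_{k,0}$, to those of the Gaussian shift experiment $\{\mathsf{N}(Vu,V)\}_{u\in U_0}$; hence the experiments $(Q_{k,u})_{u\in U_0}$ converge weakly to that Gaussian experiment. Write $S_k:=\sqrt k(T_k-\Gamma_k(0))$, so that regularity of $\{\Gamma_k\}$ gives $\sqrt k(T_k-\Gamma_k(u))=S_k-Du+o(1)$, the $o(1)$ deterministic and uniform over the finite set $U_0$. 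By the asymptotic representation theorem applied to $S_k$ — passing to a subsequence along which the outer $\liminf$ is achieved — there is a possibly randomized statistic $S$ in the Gaussian limit experiment with the property that, for each $u\in U_0$, the law of $S_k$ under $Q_{k,u}$ converges to the law of $S(Z_u)$ where $Z_u\sim\mathsf{N}(Vu,V)$. Slutsky together with the portmanteau inequality for nonnegative lower semicontinuous functions then gives $\liminf_k\EE_{Q_{k,u}}[\cL(\sqrt k(T_k-\Gamma_k(u)))]\ge\EE[\cL(S(Z_u)-Du)]$ for each $u\in U_0$; since $U_0$ is finite, $\liminf_k\max_{u\in U_0}\EE_{Q_{k,u}}[\,\cdot\,]\ge\max_{u\in U_0}\EE[\cL(S(Z_u)-Du)]\ge\tfrac{1}{|U_0|}\sum_{u\in U_0}\EE[\cL(S(Z_u)-Du)]$, which is the Bayes risk of $S$ under the uniform prior on $U_0$, hence at least the minimal Bayes risk over all randomized estimators for that prior.

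It remains to show that the supremum over finite $U_0$ of these minimal Bayes risks is at least $\EE[\cL(Z)]$; here one takes $U_0$ to discretize a centered Gaussian prior $\mathsf{N}(0,\tau^2 I)$ on the local parameter $u$. Under such a prior in the experiment $Z_u\sim\mathsf{N}(Vu,V)$, the conditional law of $Du$ given $Z$ is Gaussian, $\mathsf{N}(\mu(Z),\Sigma_\tau)$ with $\Sigma_\tau\uparrow DV^{-1}D^\top$ as $\tau\to\infty$. Since $\cL$ is symmetric, quasiconvex and lower semicontinuous, Anderson's lemma shows the posterior mean $\mu(Z)$ is Bayes optimal, with Bayes risk $\EE[\cL(\mathsf{N}(0,\Sigma_\tau))]$; as $\mathsf{N}(0,\Sigma_\tau)\Rightarrow Z$ and $\cL\ge0$ is lower semicontinuous, $\liminf_{\tau\to\infty}\EE[\cL(\mathsf{N}(0,\Sigma_\tau))]\ge\EE[\cL(Z)]$. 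Letting the discretization refine and $\tau\to\infty$ shows the supremum over $U_0$ of the minimal Bayes risks is at least $\EE[\cL(Z)]$, and chaining the displayed inequalities — then undoing the two reductions — yields the full statement, including the singular case.

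The crux is the transfer to the Gaussian experiment, and specifically the fact that the centered estimators $S_k$ admit a randomized limit in the limiting experiment. Cleanly this is the asymptotic representation theorem, whose hypothesis (weak convergence of experiments) is precisely what LAN supplies; at a more hands-on level it is a tightness-and-subsequence argument combined with Le Cam's third lemma to identify the law of $S_k$ under the shifted measures $Q_{k,u}$, where one must also control the mass of $S_k$ escaping to infinity — a point that becomes trivial once $\cL$ is truncated but is not entirely automatic beforehand. Everything else — Slutsky, portmanteau, Anderson's lemma, and the Gaussian posterior computation — is routine.
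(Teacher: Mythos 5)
The paper does not actually prove this statement: Theorem~\ref{thm:localminimax} is quoted from the literature, with the proof delegated to \cite{le2000asymptotics}, \cite{vanderVaart1996}, and \cite{duchi2021asymptotic}. Your proposal reconstructs the standard argument from exactly those sources --- LAN gives convergence to the Gaussian shift experiment $\{\mathsf{N}(Vu,V)\}$, regularity of $\Gamma_k$ recenters the loss by $Du$, a representation/contiguity step transfers the estimator sequence to the limit experiment, and a Gaussian-prior Bayes argument with Anderson's lemma finishes, with truncation of $\cL$ and the $\mathsf{N}(u,\lambda^{-1}I)$ augmentation handling unbounded losses and singular $V$ --- so the architecture is the right one and matches what the cited proofs do.

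Two points, however, keep your sketch from being a proof of the statement \emph{as written}. First, your final step discretizes a $\mathsf{N}(0,\tau^{2}I)$ prior and sends $\tau\to\infty$ so that the posterior covariance of $Du$ climbs to $DV^{-1}D^{\top}$; this forces the finite sets $U_0$ to reach arbitrarily far out in $\RR^{d}$, whereas the theorem here only assumes $U$ is a neighborhood of zero. With a bounded $U$ the inequality is in fact false: take $d=1$, $Q_{k,u}=\mathsf{N}(u/\sqrt{k},1)^{\otimes k}$ (exactly LAN with $V=1$), $\Gamma_k(u)=u/\sqrt{k}$, $U=(-1/2,1/2)$, $T_k\equiv 0$, and $\cL=\mathbf{1}\{|x|>1\}$; the left-hand side of \eqref{eq:lecam} is $0$ while $\EE[\cL(Z)]=\PP(|\mathsf{N}(0,1)|>1)>0$. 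So your argument (like the cited theorems, which take the local parameter over all of $\RR^{d}$ or over fixed-size neighborhoods in the original scale) proves the classical version; no argument can deliver the literal statement with an arbitrary bounded neighborhood, and this matters downstream because the paper invokes the result with $U=\{u:\|u\|\le\sqrt{c}\}$ for a fixed $c$. Second, the escaping-mass issue you flag is not disposed of merely by truncating $\cL$: boundedness of the loss removes integrability worries, but a non-tight sequence $S_k$ still only has limits on a compactification, and one must either check that the lower-semicontinuous extension of a bowl-shaped loss to that compactification still obeys an Anderson-type bound (true, but it is an argument, not a triviality --- radial monotonicity of symmetric quasiconvex functions is what saves it), or bypass the representation theorem and prove convergence of the finite-sample Bayes risks directly from LAN and contiguity, which is how the cited proofs proceed. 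With the parameter set taken to be all of $\RR^{d}$ and that step made precise, your outline is the standard proof.
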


To establish the lower bound \eqref{eq:minimaxlb} in Theorem~\ref{thm:optimality}, we will apply Theorem~\ref{thm:localminimax} as follows. Suppose henceforth that Assumptions~\ref{assmp:lipdist}, \ref{assmp:lipgrad}, and \ref{ass:extra} hold with the equilibrium point $\xs$ lying in the interior of $\cX$. Let $\mathcal{A}_k\colon \mathcal{Z}^{k}\times\cX^{k}\to \mathcal{X}$ be a dynamic estimation procedure and fix an initial point $\tilde{x}_0\in\cX$ and a score function $g\in\cG$.
 For each $k\in\N$ and $u\in \R^d$, we let 
\begin{equation}\label{eq:P}
	P_{k,u} := \bigotimes_{i=0}^{k-1}\cD_{\tilde{x}_{i}}^{u}
\end{equation} 
denote the distribution on $\Z^k$ induced by $\cD^u$ along the sequence \eqref{eq:Palgo}, and we set
\begin{equation}\label{eq:Q}
	Q_{k,u} := P_{k,{\smash{u/\sqrt{{k}}}}}.
\end{equation} 
Further, we define $\psi\colon\R^d\rightarrow\R^d$ by 
\begin{equation*}
	\psi(u) = \begin{cases}
      \xs_{\smash{u}}& \text{if}\ u \in \cU \\
      0 & \text{otherwise}
    \end{cases}
\end{equation*}
and take $\Gamma_k\colon\R^d\rightarrow\R^d$ to be the induced mapping sequence given by 
\begin{equation*}
	\Gamma_k(u) = \psi(u/\sqrt{k});
\end{equation*} 
since $\cU$ is a neighborhood of zero, it follows that for each $u\in\R^d$, we have $\Gamma_k(u)=\xs_{\smash{u/\sqrt{k}}}$ for all but finitely many $k\in\NN$. 

We now state two key lemmas that will allow us to apply Theorem~\ref{thm:localminimax}; their proofs are deferred to Sections~\ref{sec:proof_LAN-new} and \ref{sec:proof_smooth-kappa}, respectively. The first lemma verifies that $\{Q_{k,u} \,|\, u\in \R^d\}$ is locally asymptotically normal at zero with precision 
 \begin{equation*}
 	\Sigma_{g} := \underset{z \sim \cD_{\xs}}{\mathop{\EE}}\!\big[g_{\xs}(z)g_{\xs}(z)^{\top}\big],
 \end{equation*}
 while the second lemma shows that $\psi$ is $C^1$-smooth on a neighborhood of zero and computes $\nabla\psi(0) = -W^{{-1}} \Sigma_{g, G}^{\top}$, where 
 \begin{equation*}
 	\Sigma_{g, G} := \underset{z\sim\cD_{\xs}}{\EE}\!\big[g_{\xs}(z) G(\xs, z)^{\top}\big].
 \end{equation*}

\begin{lemma}[LAN]\label{lem:LAN-new}
Let $Z_{k}\colon\cZ^k\rightarrow\R^d$ be the sequence of random vectors given by $$Z_{k} = \frac{1}{\sqrt{k}}\sum_{i=0}^{k-1} { g_{\tilde x_{i}}(z_{i})}.$$ Then $Z_k \overset{\smash{0}}{\leadsto} \mathsf{N}(0, \Sigma_g)$, where $\overset{\smash{0}}{\leadsto}$ denotes convergence in distribution with respect to $Q_{k,0}$. Moreover, for each $u\in\R^d$, 
\begin{equation*}
	\log \frac{dQ_{k, u}}{dQ_{k, 0}} = u^{\top}Z_{k} - \frac{1}{2} u^{\top}\Sigma_{g} u + o_{Q_{k,0}}(1).
\end{equation*}
Hence $\{Q_{k,u} \!\mid\! u\in \R^d\}$ is locally asymptotically normal with precision $\Sigma_{g}$ at zero.
\end{lemma}

\begin{lemma}[Smooth equilibrium perturbation] \label{lem:smooth-kappa}
  The map $\psi$ is $C^1$-smooth on a neighborhood of zero with $\nabla\psi(0) = -W^{{-1}} \Sigma_{g, G}^{\top}$. Hence $\Gamma_k$ is regular with derivative $\dot{\Gamma} = -W^{{-1}} \Sigma_{g, G}^{\top}$ at zero.
\end{lemma}

Importantly, taking $g$ to be the noise map $\xi$ given by \eqref{eq:noise} and noting $\xi(x^\star, z) = G(x^\star,z)$ yields
\begin{equation*}\label{eq:noisematrices}
	\Sigma_{\xi}=\Sigma_{\xi,G} = \underset{z\sim\cD_{\xs}}{\EE}\!\big[G(\xs, z) G(\xs, z)^{\top}\big] = \Sigma. 
\end{equation*}  
We are now in position to apply Theorem~\ref{thm:localminimax}. Let $\cL\colon \RR^{d} \rightarrow [0,\infty)$ be symmetric, quasiconvex, and lower semicontinuous, $\widehat x_{k}\colon \cZ^{k}\rightarrow \RR^{d}$ be any sequence of estimators, and suppose henceforth that $g=\xi$. 
Invoking Lemmas~\ref{lem:LAN-new} and \ref{lem:smooth-kappa} and applying Theorem~\ref{thm:localminimax} yields
\begin{align}\label{eq:4-almost}
	&\sup_{\cI \subset \R^d,\, |\cI| < \infty}\liminf_{k \rightarrow \infty} \max_{u\in \cI} \,{\EE}_{P_{k,{\smash{u/\sqrt{k}}}}}\big[\cL \big(\sqrt{k} \big(\widehat x_{k} - \xs_{\smash{u/\sqrt{k}}}\big)\big)\big]  \nonumber \\
	&\hspace{.9in}= \sup_{\cI \subset \R^d,\, |\cI| < \infty}\liminf_{k \rightarrow \infty} \max_{u\in \cI}\, {\EE}_{Q_{k,u}}\!\big[\cL\big(\sqrt{k} \big(\widehat{x}_{k} - \Gamma_k(u)\big)\big)\big] \geq {\EE}[\cL(Z_{\lambda})],
\end{align}
where $Z_{\lambda} \sim \mathsf{N}(0,W^{{-1}}\Sigma(\Sigma + \lambda I)^{{-1}} \Sigma W^{-\top})$ for any $\lambda > 0$. 

Letting $\lambda\downarrow0$ in \eqref{eq:4-almost} establishes \eqref{eq:minimaxlb}. Indeed, let $\Sigma = AA^{\top}$ be a Cholesky decomposition of $\Sigma$ and observe that the pseudoinverse identities 
$
A^\dagger = \lim_{\lambda\downarrow0} A^{\top}\big(AA^{\top} + \lambda I\big)^{-1}
$ 
and $AA^{\dagger}A = A$ imply
$$
\lim_{\lambda\downarrow0} \Sigma\big(\Sigma + \lambda I\big)^{{-1}} \Sigma = A\Big(\lim_{\lambda\downarrow0} A^{\top}\big(AA^{\top} + \lambda I\big)^{{-1}}\Big)AA^{\top} = \big(A A^{\dagger}A\big)A^{\top} = AA^{\top} = \Sigma. 
$$
Thus, upon setting $\widetilde{\Sigma}_\lambda := W^{{-1}}\Sigma(\Sigma + \lambda I)^{{-1}} \Sigma W^{-\top}$ and  $\widetilde{\Sigma} := W^{{-1}}\Sigma W^{-\top}$, we have $\widetilde{\Sigma}_\lambda \rightarrow \widetilde{\Sigma}$ as $\lambda\downarrow 0$. Further, for all $0<\lambda_2\leq\lambda_1$, we have ${\exp}\big({-}\tfrac{1}{2}v^{\top}\widetilde{\Sigma}_{\lambda_2}^{\dagger} v\big) \geq {\exp}\big({-}\tfrac{1}{2}v^{\top}\widetilde{\Sigma}_{\lambda_1}^{\dagger} v\big)$ for all $v\in\R^d$. Since the densities corresponding to $Z_{\lambda} \sim \mathsf{N}(0,\widetilde{\Sigma}_{\lambda})$ and $Z \sim \mathsf{N}(0,\widetilde{\Sigma})$ with respect to the Lebesgue measure restricted to $S:=\range \widetilde{\Sigma}$ are given by 
\begin{equation*}
	p_{\lambda}(v):= \frac{{\exp}\big({-}\tfrac{1}{2}v^{\top}\widetilde{\Sigma}_{\lambda}^{\dagger} v\big)}{\sqrt{(2\pi)^r\,{\det}^{*}\big(\widetilde{\Sigma}_\lambda\big)}}\qquad\text{and}\qquad p(v):= \frac{{\exp}\big({-}\tfrac{1}{2}v^{\top}\widetilde{\Sigma}^{\dagger} v\big)}{\sqrt{(2\pi)^r\,{\det}^{*}\big(\widetilde{\Sigma}\big)}},
\end{equation*}
where $r$ is the rank of $\Sigma$, we may therefore apply the monotone convergence theorem to obtain 
\begin{align*}
	\lim_{\lambda\downarrow0} {\EE}[\cL(Z_{\lambda})] &= \lim_{\lambda\downarrow0} \frac{1}{\sqrt{(2\pi)^r\,{\det}^{*}\big(\widetilde{\Sigma}_\lambda\big)}}\int_{S}\cL(v)\,{\exp}\big({-}\tfrac{1}{2}v^{\top}\widetilde{\Sigma}_{\lambda}^{\dagger} v\big)\,dv \\
	&= \frac{1}{\sqrt{(2\pi)^r\,{\det}^{*}\big(\widetilde{\Sigma}\big)}}\int_{S}\cL(v)\,{\exp}\big({-}\tfrac{1}{2}v^{\top}\widetilde{\Sigma}^{\dagger} v\big)\,dv \\
	&= {\EE}[\cL(Z)].
\end{align*}
Hence \eqref{eq:4-almost} entails \eqref{eq:minimaxlb}.

To prove the final claim of Theorem~\ref{thm:optimality}, we proceed by establishing a type of asymptotic equivariance of the average SFB iterates \citep[e.g., see][Lemma~8.14]{van2000asymptotic}.
\begin{lemma}[Asymptotic equivariance]\label{lem:equivariance}  
	Let $\cA_k$ be the dynamic estimation procedure \eqref{eq:sfb_dyn} corresponding to Algorithm~\ref{alg:1} with initial point $x_0 = \tilde{x}_0$ and step sizes $\eta_{k} \propto k^{-\nu}$ for some $\nu\in\big(\tfrac{1}{2}, 1\big)$. Then the average iterates $\bar{x}_k=\frac{1}{k}\sum_{i=1}^{k}x_{i}$ are asymptotically equivariant-in-law with respect to $\{Q_{k,u} \,|\, u\in \R^d\}$ for estimating $x^\star$, that is, for each $u\in\R^d$, 
	\begin{equation}\label{eq:equivariance}
		\sqrt{k}\big(\bar{x}_{k} - \Gamma_k(u)\big) \overset{\smash{u}}{\leadsto} \mathsf{N}\big(0, W^{-1}\Sigma W^{-\top}\big).
	\end{equation}
\end{lemma}
\begin{proof}
Lemma~\ref{lem:LAN-new} shows that the sequence of random vectors $Z_{k}\colon\cZ^k\rightarrow\R^d$ given by $$Z_{k} = \frac{1}{\sqrt{k}}\sum_{i=0}^{k-1} \xi_{x_i}(z_i)$$ satisfies
\begin{equation}\label{eq:lan_sfb_normal}
	Z_k \overset{\smash{0}}{\leadsto} \mathsf{N}(0, \Sigma),
\end{equation}
and, for each $u\in\R^d$, 
\begin{equation}\label{eq:lan_sfb}
	\log \frac{dQ_{k, u}}{dQ_{k, 0}} = u^{\top}Z_{k} - \frac{1}{2} u^{\top}\Sigma u + o_{Q_{k,0}}(1).
\end{equation} 
Moreover, Theorem~\ref{thm:anperf} reveals
\begin{equation}\label{eq:an_sfb_prob}
	\sqrt{k}(\bar x_{k} - \xs) = -W^{-1}Z_k + o_{Q_{k,0}}(1).
\end{equation} 	
Now let $\bar{Z}\sim\mathsf{N}(0, \Sigma)$, fix $u\in\R^d$, and consider the affine map $\varphi\colon\R^d\to\R^{d+1}$ given by
\begin{equation*}
	\varphi(z) =
	\begin{pmatrix} 
		-W^{-1} \\ 
		u^{\top} 	
	\end{pmatrix}
	\!z +
	\begin{pmatrix} 
		0  \\ 
		{-}\tfrac{1}{2}u^{\top}\Sigma u
	\end{pmatrix}\!.
\end{equation*}   
Then \eqref{eq:lan_sfb_normal} implies $\varphi(Z_k) \overset{\smash{0}}{\leadsto} \varphi(\bar{Z})$ and hence
\begin{equation}\label{eq:LeCam3rdLem_hyp}
	\!\begin{pmatrix} 
		\sqrt{k}(\bar{x}_k - x^{\star})  \\ 
		\log \frac{dQ_{k, u}}{dQ_{k, 0}} 
	\end{pmatrix}
	\overset{\smash{0}}{\leadsto} 
	\begin{pmatrix} 
		-W^{-1}\bar{Z}  \\ 
		u^{\top}\bar{Z} - \tfrac{1}{2} u^{\top}\Sigma u
	\end{pmatrix}
	\sim 
	\mathsf{N}\Bigg(\mkern-6mu \begin{pmatrix} 
						0  \\ 
						{-}\tfrac{1}{2}u^{\top}\Sigma u
					\end{pmatrix}\!, 
					\begin{pmatrix} 
						W^{-1}\Sigma W^{-\top} & {-}W^{-1}\Sigma u \\ 
						{-}u^{\top}\Sigma W^{-\top} & u^{\top}\Sigma u
					\end{pmatrix}\mkern-6mu\Bigg)\mkern-.2mu
\end{equation}   
by virtue of \eqref{eq:lan_sfb}, \eqref{eq:an_sfb_prob}, and the continuous mapping theorem \citep[see][Theorems~2.3 and 2.7]{van2000asymptotic}. 

In light of \eqref{eq:LeCam3rdLem_hyp}, Le Cam's Third Lemma asserts
\begin{equation}\label{eq:LeCam3rdLem_conc}
	\sqrt{k}(\bar{x}_k - x^{\star}) \overset{\smash{u}}{\leadsto} \mathsf{N}\big({-}W^{-1}\Sigma u, W^{-1}\Sigma W^{-\top}\big)
\end{equation}
\citep[see][Example~6.7]{van2000asymptotic}. On the other hand, Lemma~\ref{lem:smooth-kappa} shows that $\Gamma_k$ is a regular mapping sequence with derivative $\dot{\Gamma} = -W^{{-1}} \Sigma$ at zero, so 
\begin{equation}\label{eq:regular_eq_estimator}
	\sqrt{k}\big(x^\star - \Gamma_k(u)\big) = -\sqrt{k}\big(\Gamma_k(u) - \Gamma_k(0)\big) \rightarrow W^{-1}\Sigma u \qquad \text{as~} k\rightarrow\infty.
\end{equation} 
Combining \eqref{eq:LeCam3rdLem_conc} and \eqref{eq:regular_eq_estimator} yields \eqref{eq:equivariance}.
\end{proof}

 Finally, suppose that the assumptions of Lemma~\ref{lem:equivariance} hold. Let $\varphi\colon\R^d\rightarrow\R$ be any bounded continuous function and $Z\sim \mathsf{N}(0, W^{-1} \Sigma W^{-\top})$. Then \eqref{eq:equivariance} directly implies that for every finite subset $\cI\subset\R^d$, we have
 \begin{equation*}
 	\lim_{k \rightarrow \infty} \max_{u\in \cI} \,{\EE}_{P_{k,{\smash{u/\sqrt{k}}}}}\big[\varphi \big(\sqrt{k} \big(\bar{x}_k - \xs_{\smash{u/\sqrt{k}}}\big)\big)\big] = \max_{u\in \cI}\lim_{k \rightarrow \infty}{\EE}_{Q_{k,u}}\!\big[\varphi \big(\sqrt{k} \big(\bar{x}_{k} - \Gamma_k(u)\big)\big)\big] = {\EE}[\varphi(Z)].
 \end{equation*}
Hence
 \begin{align*}
 	\sup_{\cI \subset \R^d,\, |\cI| < \infty}\liminf_{k \rightarrow \infty} \max_{u\in \cI} \,{\EE}_{P_{k,{\smash{u/\sqrt{k}}}}}\big[\varphi \big(\sqrt{k} \big(\bar x_{k} - \xs_{\smash{u/\sqrt{k}}}\big)\big)\big] = {\EE}[\varphi(Z)],
 \end{align*}
thereby demonstrating equality in \eqref{eq:minimaxlb} whenever $\cL$ is bounded and continuous. The proof of Theorem~\ref{thm:optimality} is complete.

\acks{Research of Drusvyatskiy was supported by the NSF DMS 1651851 and CCF 1740551 awards. We thank the anonymous reviewers for their insightful comments.}

\appendix
\section{Proofs Deferred from Sections~\ref{sec:performative} and \ref{sec:normal}}\label{sec:missing_proofs_prelim}
\subsection{Proof of Lemma~\ref{lem:dev_game_jac}}\label{sec:proof_lem_dev}
For any $x,x',y\in\cX$, we successively estimate
\begin{align}
	\|G_{x}(y)-G_{x'}(y)\| & =\left\|\mathop{\EE}_{z\sim \mathcal{D}(x)} G(y,z)-\mathop{\EE}_{z\sim \mathcal{D}(x')}G(y,z)\right\|\notag                                                    \\
	                       & =\sup_{\|v\|\leq 1}\left\{\mathop{\EE}_{z\sim \mathcal{D}(x)}\langle G(y,z),v\rangle-\mathop{\EE}_{z\sim \mathcal{D}(x')}\langle G(y,z),v\rangle\right\}\notag \\
	                       & \leq  \beta \cdot W_1\big(\mathcal{D}(x),\mathcal{D}(x')\big)\label{eqn:simple_ineq}                                                                           \\
	                       & \leq  \beta\gamma \cdot \|x-x'\|,\notag
\end{align}
where inequality \eqref{eqn:simple_ineq} follows from the $\beta$-Lipschitz continuity of the function $z\mapsto \langle G(y,z),v\rangle$ and the characterization \eqref{eq:W1} of $W_1$.

\subsection{Proof of Theorem~\ref{thm:existence}}\label{sec:proof_thm_exist}
Fix any two points $x,x'\in\cX$ and set
$y:=\mathtt{Sol}(x)$ and $y':=\mathtt{Sol}(x')$. Note that the definition of the normal cone implies
$$\langle G_{x}(y),y-y'\rangle\leq 0\qquad \textrm{and}\qquad \langle G_{x'}(y'),y'-y\rangle\leq 0.$$
Strong monotonicity therefore ensures
\begin{align*}
	\alpha\|y-y'\|^2 & \leq  \langle G_{x}(y)-G_{x}(y'),y-y'\rangle   \\
	                 & \leq  \langle G_{x'}(y')-G_{x}(y'),y-y'\rangle \\
	                 & \leq \| G_{x'}(y')-G_{x}(y')\|\cdot\|y-y'\|\   \\
	                 & \leq \gamma\beta\|x-x'\|\cdot \|y-y'\|,
\end{align*}
where the last inequality follows from Lemma~\ref{lem:dev_game_jac}.
Dividing through by $\alpha\|y-y'\|$ guarantees that $\mathtt{Sol}(\cdot)$ is indeed a
contraction on $\cX$ with parameter $\frac{\gamma\beta}{\alpha}$. The result follows immediately from the Banach fixed point theorem.

\subsection{Proof of Proposition~\ref{prop:asconv}} \label{sec:proof_of_simple_prop}

We will use the following classical result known as the Robbins-Siegmund almost supermartingale convergence theorem \cite[for a proof, see][Theorem 1.3.12]{duflo97}.
\begin{lemma}[Robbins-Siegmund]\label{lem:rs}
	Let $(A_t),(B_t),(C_t),(D_t)$ be sequences of finite nonnegative random variables on a filtered probability space $(\Omega,\mathcal{F},\mathbb{F},\mathbb{P})$ adapted to the filtration $\mathbb{F}=(\mathcal{F}_t)$ and satisfying
	$${\Expect}[A_{t+1}\,|\,\mathcal{F}_t]\leq(1+B_t)A_t+C_t-D_t$$
	for all $t$. Then on the event $\left\{\sum_t B_t<\infty,\sum_t C_t<\infty\right\}$, there is a finite random variable $A_\infty$ such that $A_t\rightarrow A_\infty$ and $\sum_t D_t < \infty$ almost surely.
\end{lemma}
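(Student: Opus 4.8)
The plan is to convert the ``almost supermartingale'' recursion into a genuine nonnegative supermartingale in three steps: rescale to cancel the multiplicative factor $1+B_t$, localize with stopping times so that the residual additive perturbation has finite total mass, and then apply the convergence theorem for nonnegative supermartingales.

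First I would introduce the rescaling factors $\mu_0:=1$ and $\mu_t:=\prod_{s=0}^{t-1}(1+B_s)^{-1}$ for $t\ge1$, and record that $\mu_{t+1}$ is $\cF_t$-measurable with $0<\mu_{t+1}\le\mu_t\le1$. Multiplying the hypothesis by $\mu_{t+1}$ and using $\mu_{t+1}(1+B_t)=\mu_t$ gives
$$\EE\big[\mu_{t+1}A_{t+1}\mid\cF_t\big]\le\mu_tA_t+\mu_{t+1}(C_t-D_t),$$
so that the nonnegative process $X_t:=\mu_tA_t+\sum_{s=0}^{t-1}\mu_{s+1}D_s$ satisfies
$$\EE\big[X_{t+1}\mid\cF_t\big]\le X_t+\mu_{t+1}C_t\le X_t+C_t.$$
Hence $X_t$ fails to be a supermartingale only through the additive term $\mu_{t+1}C_t$, which is summable on the event $\Gamma:=\{\sum_tB_t<\infty,\ \sum_tC_t<\infty\}$.

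Next comes the localization. For $n\in\NN$ set $\sigma_n:=\inf\{t\ge0:\sum_{s=0}^{t}(B_s+C_s)>n\}$, which is a stopping time because $B$ and $C$ are adapted, and note that nonnegativity of $B_s,C_s$ forces $\Gamma=\bigcup_n\{\sigma_n=\infty\}$. Fix $n$, write $Y_t:=X_{t\wedge\sigma_n}$, and set $E_t:=\mathbf{1}\{t<\sigma_n\}\mu_{t+1}C_t$; this is $\cF_t$-measurable and $\sum_{s\ge0}E_s\le\sum_{s=0}^{\sigma_n-1}C_s\le n$. A short optional-stopping computation from the previous bound yields
$$\EE\big[Y_{t+1}\mid\cF_t\big]\le Y_t+E_t,$$
so $M_t:=Y_t-\sum_{s=0}^{t-1}E_s$ is a genuine supermartingale bounded below by $-n$. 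Modulo routine integrability of the stopped process (as in the standard statement of the lemma, one uses integrability of the $A_t$), $M_t+n$ is a nonnegative supermartingale and therefore converges a.s.\ to a finite limit. Since the increasing sums $\sum_{s<t}E_s$ are bounded by $n$, they converge too, so $Y_t=M_t+\sum_{s<t}E_s$ converges a.s.; on $\{\sigma_n=\infty\}$ we have $Y_t\equiv X_t$, and letting $n\to\infty$ shows that $X_t$ converges a.s.\ on $\Gamma$.

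Finally I would unwind the rescaling on $\Gamma$. There $\sum_sB_s<\infty$ forces $\mu_t\downarrow\mu_\infty:=\prod_{s\ge0}(1+B_s)^{-1}\in(0,1]$, since $\log\prod_s(1+B_s)\le\sum_sB_s<\infty$. Both summands of $X_t=\mu_tA_t+\sum_{s<t}\mu_{s+1}D_s$ are nonnegative and the $D$-sum is nondecreasing, so convergence of $X_t$ forces $\sum_s\mu_{s+1}D_s<\infty$, and since $\mu_{s+1}\ge\mu_\infty>0$ this yields $\sum_sD_s<\infty$ on $\Gamma$. Consequently $\mu_tA_t=X_t-\sum_{s<t}\mu_{s+1}D_s$ converges on $\Gamma$, and dividing by $\mu_t\to\mu_\infty>0$ produces the finite nonnegative limit $A_\infty$. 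I expect the main obstacle to be the localization bookkeeping --- designing the stopping times so that the perturbation has finite total mass while keeping the stopped process integrable, and verifying that $\Gamma$ is exactly exhausted by the events $\{\sigma_n=\infty\}$; the rest is the standard nonnegative-supermartingale convergence machinery together with the elementary estimate $\log(1+x)\le x$.
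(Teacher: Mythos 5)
Your proposal is correct, but note that the paper does not prove this lemma at all: it is quoted as a classical result with a citation to Duflo (Theorem 1.3.12), so there is no internal proof to compare against. What you have written is essentially the classical Robbins--Siegmund argument: discount by $\mu_t=\prod_{s<t}(1+B_s)^{-1}$ to absorb the multiplicative factor, add the accumulated $\mu_{s+1}D_s$ to get a process that is a supermartingale up to the summable perturbation $\mu_{t+1}C_t$, localize with the stopping times $\sigma_n$ so that the perturbation has total mass at most $n$, invoke nonnegative-supermartingale convergence for the compensated stopped process, and then unwind using $\mu_\infty>0$ on the event $\Gamma$ to recover convergence of $A_t$ and summability of $D_t$. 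All the individual steps check out: $\mu_{t+1}$ and $E_t$ are $\cF_t$-measurable, the optional-stopping inequality $\EE[Y_{t+1}\mid\cF_t]\le Y_t+E_t$ holds by splitting on $\{\sigma_n\le t\}\in\cF_t$, $\Gamma=\bigcup_n\{\sigma_n=\infty\}$, and monotonicity of the $D$-sum plus $\mu_{s+1}\ge\mu_\infty>0$ gives $\sum_t D_t<\infty$ on $\Gamma$. The one loose end is the integrability you flag: the paper's statement assumes only nonnegative finite random variables, so the conditional expectations must be read in the extended sense and your stopped process need not be integrable if $A_0$ is not. This is easily patched --- either intersect with the $\cF_0$-events $\{A_0\le m\}$, on which the induction $\EE[Y_{t+1}\mathbf{1}\{A_0\le m\}]\le\EE[Y_t\mathbf{1}\{A_0\le m\}]+\EE[E_t\mathbf{1}\{A_0\le m\}]\le m+n$ restores integrability and the events exhaust $\Omega$ as $m\to\infty$, or invoke the convergence theorem for generalized nonnegative supermartingales --- and in the paper's application $A_t=\|x_t-x^\star\|^2$ is integrable anyway, so nothing is lost.
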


Toward applying Lemma~\ref{lem:rs} with $A_t = \|x_t - \xs\|^2$, let $(\mathcal{F}_t)$ be the filtration given by \eqref{eq:filtration} and observe that the SFB iterate sequence $(x_t)$ is given by
$$
	x_{t+1}=\proj_{\mathcal{X}}\big(x_t-\eta_t(R(x_t) + \xi_t) \big),
$$
where the map $R\colon\cX\rightarrow\R^d$ given by $R(x) = G_x(x)$ is Lipschitz continuous and strongly monotone on $\cX$ with constants $\bar{L} + \gamma\beta$ and $\bar\alpha = \alpha - \gamma\beta$, respectively (see Lemma~\ref{lem:lip}), and the noise vector $\xi_t = G(x_t, z_t) - R(x_t)$ satisfies $\EE[\xi_{t} \,|\, \cF_{t}] = 0$ (zero bias) with variance bound $\EE[\|\xi_{t}\|^2 \,|\, \cF_{t}]\leq K(1 + \|x_t - \xs\|^2)$ for all $t\geq0$ (Assumption~\ref{assmp:stoc}).
Thus, since $\eta_t\rightarrow 0$ (recall $\sum_{t}\eta_{t}^{2} <\infty$), we see that
for all sufficiently large $t$, we may apply the one-step improvement bound of \citet[Theorem 24]{narang2022multiplayer} with zero bias to obtain
\begin{align}
	{\EE}\big[\|x_{t+1} - \xs\|^2\mid\cF_t\big] & \leq \frac{1+2K\eta_t^2}{1+\bar{\alpha}\eta_t} \|x_t - \xs\|^2 + \frac{2K\eta_t^2}{1+\bar{\alpha}\eta_t} \label{eq:converge-please-1} \\ &\leq  (1 - \tfrac{1}{2}\bar{\alpha}\eta_t)\|x_t - \xs\|^2 + \frac{2K\eta_t^2}{1+\bar{\alpha}\eta_t}  \label{eq:converge-please-2}\\
	                                            & \leq  \|x_t - \xs\|^2+2K\eta^2_t-\tfrac{1}{2}\bar\alpha\eta_t \|x_t - \xs\|^2. \label{eq:converge-please-3}
\end{align}
(For \eqref{eq:converge-please-1}, it suffices to require $\eta_t \leq \frac{\bar\alpha}{2(\bar{L} + \gamma\beta)^2}$; for \eqref{eq:converge-please-2}, it suffices to require $\eta_t \leq \frac{\bar\alpha}{4K + \bar{\alpha}^2}$.)

Using \eqref{eq:converge-please-3}, we may now apply Lemma~\ref{lem:rs} with $A_{t} = \|x_t - \xs\|^2$, $B_{t}=0$, $C_{t} = 2K\eta_{t}^{2}$, and $D_{t}= \tfrac{1}{2}\bar\alpha \eta_{t}\|x_t - \xs\|^2$. By assumption, we have $\sum_{t}\eta_{t}^{2} <\infty$, so Lemma~\ref{lem:rs} yields a finite random variable $A_\infty$ such that $A_t\rightarrow A_{\infty}$ and $\sum_t D_{t} <  \infty$ almost surely. Hence $\|x_{t} - \xs\|^2 \rightarrow A_{\infty}$ and $\sum_t \eta_t\|x_{t} - \xs\|^2  <  \infty$ almost surely. Since $\sum_{t} \eta_{t} = \infty$, we conclude $A_{\infty} = \lim_{t}\|x_{t} - \xs\|^2   = 0$ almost surely, i.e., $x_t \to \xs$ almost surely.

Next, to establish the in-expectation rate, note that \eqref{eq:converge-please-3} and the tower rule imply
$$
	{\EE}\|x_{t+1} - \xs\|^2 \leq \big(1 - \tfrac{1}{2}\bar{\alpha}\eta_{t}\big){\EE}\|x_t - \xs\|^2 + 2K\eta_{t}^{2}
$$
for all sufficiently large $t$. Thus, upon supposing $\eta_{t} = \Theta(t^{-\nu})$ for some $\nu\in\big(\tfrac{1}{2}, 1\big)$, a standard inductive argument \cite[see, e.g.,][Lemma 3.11.8]{davis2021subgradient} yields a constant $C>0$ such that $ {\EE} \|x_t - \xs\|^2 \leq Ct^{-\nu}$ for all $t\geq 1$. Therefore
$$
	{\EE}\Bigg[\sum_{t=1}^{\infty}t^{-1/2}\|x_t - \xs\|^2 \Bigg] \leq C\sum_{t=1}^{\infty}t^{-(\nu+1/2)} < \infty
$$
and hence $\sum_{t=1}^{\infty}t^{-1/2}\|x_t - \xs\|^2 < \infty$ almost surely. This completes the proof.

\section{Review of Asymptotic Normality}\label{sec:rev_assymptot_norm}
In this appendix, we present a variation of the asymptotic normality result of \citet[Theorem~2]{polyak92}.  Consider a measurable set $\cX\subset\R^d$ and a measurable map $R\colon \cX \rightarrow \RR^{d}$. Suppose that there exists a solution $x^{\star} \in \cX$ to the equation $R(x) = 0$.
The goal is to approximate $x^\star$ while only having access to noisy evaluations of $R$. Given $x_0\in\X$, consider the iterative process
\begin{equation}
	\label{eq:pol}
	x_{t+1} = x_{t} - \eta_{t} \big(R(x_{t}) + \xi_{t}+\zeta_t\big),
\end{equation}
where $\eta_t$ is a deterministic positive step size, $\xi_{t}$ is a random vector in $\R^d$ representing noise with zero mean conditioned on prior information, and $\zeta_t$ is a random vector in $\R^d$ representing a residual element that both ensures $x_{t+1}\in\X$ and quantifies the difference between $x_{t+1}$ and the basic step $x_{t} - \eta_{t}(R(x_{t}) + \xi_{t})$ in the unbiased direction $-(R(x_t) + \xi_t)$; for example, taking $$\zeta_{t} = \frac{x_{t} - \eta_{t}\big(R(x_{t}) + \xi_{t}\big) - \proj_{\cX}\big(x_t-\eta_t\big(R(x_{t}) + \xi_{t}\big)\big)}{\eta_t}$$ in \eqref{eq:pol} yields the stochastic forward-backward method $x_{t+1}=\proj_{\mathcal{X}}(x_t-\eta_t(R(x_t) + \xi_t))$.

The following assumption formalizes the stochastic framework for our analysis.

\begin{assumption_ap}[Stochastic framework]\label{assmp:polyakstoc}
	The sequences $(x_{t})_{t\geq0}$, $(\xi_{t})_{t\geq0}$,  and $(\zeta_t)_{t\geq 0}$ in \eqref{eq:pol} are stochastic processes defined on a probability space $(\Omega, \cF, \mathbb{P})$ equipped with a filtration $(\cF_t)_{t\geq 0}$ such that $x_t$ is $\cF_t$-measurable, $\xi_t$ and $\zeta_t$ are $\mathcal{F}_{t+1}$-measurable, and $\xi_t$ constitutes a martingale difference sequence satisfying  $\EE[\xi_{t} \,|\, \cF_{t}] = 0$. Additionally, the following four conditions hold.
	\begin{enumerate}[label=(\roman*)]
		\item \label{cond1polyakstoc} \textbf{($L^2$-bounded noise)} $\sup_{t\geq0} {\EE}\|\xi_{t}\|^{2} < \infty$.
\item \label{cond2polyakstoc} \textbf{(Asymptotic covariance)} There is a deterministic positive semidefinite matrix $\Sigma$ satisfying $$\frac{1}{t}\sum_{i=0}^{t-1}{\EE}\big[\xi_{i}\xi_{i}^{\top}\,|\, \cF_{i}\big] \xlongrightarrow{p} \Sigma \qquad\text{as~}t\to\infty.$$
		\item \label{cond3polyakstoc} \textbf{(Lindeberg's condition)} For all $\varepsilon>0$,
		      $$
			      \frac{1}{t}\sum_{i=0}^{t-1}{\EE}\big[ \|\xi_{i}\|^2\mathbf{1}_{\smash{\{\|\xi_i\|\geq \varepsilon \sqrt{t}\}}} \,|\, \cF_{i}\big]\xlongrightarrow{p} 0 \qquad\text{as~}t\to\infty.
		      $$
		\item \label{cond4polyakstoc} \textbf{(Negligible residual)} $\frac{1}{\sqrt{t}}\sum_{i=0}^{t-1}\|\zeta_i\| \xlongrightarrow{p}0$ as $t\to\infty$.
	\end{enumerate}
\end{assumption_ap}

Next, we stipulate the stability conditions regulating the dynamics of \eqref{eq:pol} that we require to establish asymptotic normality of the average iterates. Recall that a matrix $A\in\R^{d\times d}$ is said to be \emph{positively stable} if every eigenvalue of $A$ has a positive real part.

\begin{assumption_ap}[Stable dynamics]\label{assmp:polyak_lin_conv} There is a positively stable matrix $A\in\R^{d\times d}$ for which the following two conditions hold.
	\begin{enumerate}[label=(\roman*)]
		\item \label{assmp:polyak_step_size} \textbf{(Step size)} The step size sequence $(\eta_{t})_{t\geq0}$ satisfies either
		      \begin{align}
			      \eta_t\equiv\eta \qquad & \text{and}\qquad 0<\eta< 2\Big(\min_{j} \realp \lambda_j (A)\Big)^{-1} \label{eq:stepconst}
			      \intertext{or}
			      \eta_t=o(1)\qquad       & \text{and}\qquad \frac{\eta_{t}-\eta_{t+1}}{\eta_{t}} = o(\eta_t) \qquad\text{as~}t\to\infty. 	\label{eq:stepdecay}
		      \end{align}
		\item \label{assmp:polyaksmooth} \textbf{(Linear approximation)} The iterate sequence $(x_{t})_{t\geq0}$ satisfies
		      \begin{equation}\label{eq:R_lin_approx_0}
			      \frac{1}{\sqrt{t}}\sum_{i=0}^{t-1} \|R(x_i) - A(x_i - \xs)\| \xlongrightarrow{p} 0 \qquad\text{as~}t\to\infty.
		      \end{equation}
	\end{enumerate}
\end{assumption_ap}
\begin{theorem}[{\citet[Theorem~2]{polyak92}}]\label{thm:polyak}
	Suppose that Assumptions \ref{assmp:polyakstoc} and \ref{assmp:polyak_lin_conv} hold.
	Then, as $t\to\infty$, the average iterates $\bar x_{t} = \frac{1}{t}\sum_{i=1}^{t} x_i$ satisfy
	\begin{equation*}
		\sqrt{t}(\bar x_{t} - x^{\star}) = -A^{-1}\bigg(\frac{1}{\sqrt{t}}\sum_{i=0}^{t-1}\xi_i\bigg) + o_{\PP}(1)
	\end{equation*}
	and hence
	\begin{equation*}
		\sqrt{t}(\bar x_{t} - x^{\star})\leadsto\mathsf{N}\big(0, A^{-1}\Sigma A^{-\top}\big).
	\end{equation*}
\end{theorem}

We remark that the assumptions of Theorem~\ref{thm:polyak} are somewhat more general than those of Theorem 2 of \cite{polyak92}, but the proof technique is the same. The primary differences are as follows:
\begin{enumerate}[label=(\alph*)]
	\item The residual term $\zeta_{t}$ in \eqref{eq:pol} need not satisfy ${\EE}[\zeta_t\,|\,\cF_t]=0$, but this causes no difficulty as we assume $\zeta_{t}$ is negligible in the sense of condition~\ref{cond4polyakstoc} of Assumption~\ref{assmp:polyakstoc}. The rest of our stochastic setting stipulates conditions on $\xi_{t}$ tailored to an application of the martingale central limit theorem (Theorem~\ref{thm:clt}); we note that Lindeberg's condition~\ref{cond3polyakstoc} of Assumption~\ref{assmp:polyakstoc} holds if the asymptotic uniform integrability condition $\limsup_{t\to\infty}{\EE}\big[ \|\xi_{t}\|^2\mathbf{1}_{\smash{\{\|\xi_t\|\geq N\}}} \,|\, \cF_{t}\big]\xlongrightarrow{p} 0 $ as $N\to\infty$ is fulfilled and $\sup_{t\geq0} {\EE}\big[ \|\xi_{t}\|^2 \,|\, \cF_{t}\big] < \infty$ almost surely.
	\item Theorem 2 of \cite{polyak92} requires $A = \nabla R (\xs)$ with
	      \begin{equation}\label{eq:R_lin_approx_1}
		      R(x) - \nabla R(\xs)(x-\xs) = O(\|x-\xs\|^{q})\qquad\text{as } x\rightarrow \xs
	      \end{equation}
	      for some $q\in(1,2]$, and assumes that the step size sequence $(\eta_{t})_{t\geq0}$ satisfies
	      \begin{equation*}
		      \sum_{t=1}^{\infty}\eta_{t}^{q/2} t^{-1/2}<\infty
	      \end{equation*}
	      in addition to \eqref{eq:stepdecay}; together with a further Lyapunov function assumption, this suffices to demonstrate that the iterate sequence $(x_{t})_{t\geq0}$ satisfies both $x_t \xlongrightarrow{\text{a.s.}} x^\star$ and $\frac{1}{\sqrt{t}}\sum_{i=0}^{t-1}\|x_i - x^\star\|^q \xlongrightarrow{\text{a.s.}} 0$ as $t\to\infty$, which by \eqref{eq:R_lin_approx_1} implies \eqref{eq:R_lin_approx_0}.
\end{enumerate}
\begin{proof}
	For each $t\geq0$, let $\Delta_t=x_t-x^\star$ denote the error of the process \eqref{eq:pol} at time $t$, with corresponding average errors given by
	$$\bar\Delta_t=\frac{1}{t}\sum_{j=1}^{t} \Delta_{j} = \bar{x}_t-x^\star\qquad\text{for all~}t\geq1.$$
	Let $A$ denote the matrix furnished by Assumption~\ref{assmp:polyak_lin_conv} and observe that \eqref{eq:pol} yields the following recursion for all $t\geq0$:
	\begin{align}
		\Delta_{t+1} & = \Delta_{t} - \eta_{t} \big(R(x_{t}) + \xi_{t}+\zeta_t\big) \notag                                     \\
		             & = (I- \eta_t A)\Delta_{t} - \eta_{t}\big(R(x_t) - A\Delta_t + \xi_{t}+\zeta_t\big).\label{eq:pj_recurs}
	\end{align}
	Unrolling the recursion \eqref{eq:pj_recurs} gives
	\begin{equation*}
		\Delta_{j} = \Bigg(\prod_{k=0}^{j-1}(I-\eta_k A)\!\Bigg)\Delta_0 - \sum_{i=0}^{j-1}\Bigg(\prod_{k=i+1}^{j-1}(I-\eta_{k})A\!\Bigg)\eta_{i}\big(R(x_i) - A\Delta_i + \xi_{i}+\zeta_i\big)
	\end{equation*}
	for all $j\geq0$ and hence
	\begin{align*}
		t\bar{\Delta}_t & = \sum_{j=1}^{t}\Bigg(\prod_{k=0}^{j-1}(I-\eta_k A)\!\Bigg)\Delta_0 - \sum_{j=1}^{t}\sum_{i=0}^{j-1}\Bigg(\prod_{k=i+1}^{j-1}(I-\eta_{k}A)\!\Bigg)\eta_{i}\big(R(x_i) - A\Delta_i + \xi_{i}+\zeta_i\big)        \\
		                & = \sum_{j=1}^{t}\Bigg(\prod_{k=0}^{j-1}(I-\eta_{k} A)\!\Bigg)\Delta_0 - \sum_{i=0}^{t-1}\sum_{j=i+1}^{t}\!\Bigg(\prod_{k=i+1}^{j-1}(I-\eta_{k} A)\!\Bigg)\eta_{i}\big(R(x_i) - A\Delta_i + \xi_{i}+\zeta_i\big)
	\end{align*}
	for all $t\geq1$ (interpreting empty products as the identity matrix and empty sums as zero). Thus, upon defining for each $t\geq1$ and $i\geq0$ the matrices
	\begin{equation*}
		B_t = \sum_{j=1}^{t}\Bigg(\prod_{k=0}^{j-1}(I-\eta_{k}A)\!\Bigg), \qquad B_i^t = \eta_i\!\sum_{j=i+1}^{t}\!\Bigg(\prod_{k=i+1}^{j-1}(I-\eta_{k}A)\!\Bigg),\qquad A_i^t = B_i^t - A^{-1},
	\end{equation*}
	we have
	\begin{align*}
		t\bar{\Delta}_t & = B_t\Delta_0 - \sum_{i=0}^{t-1}B_i^t \big(R(x_i) - A\Delta_i + \xi_{i}+\zeta_i\big)                                                                          \\
		                & = B_t\Delta_0 -\sum_{i=0}^{t-1}B_i^t \xi_i - \sum_{i=0}^{t-1}B_i^t \big(R(x_i) - A\Delta_i\big) - \sum_{i=0}^{t-1}B_i^t \zeta_i                               \\
		                & = B_t\Delta_0 - A^{-1}\sum_{i=0}^{t-1}\xi_i - \sum_{i=0}^{t-1}A_i^t\xi_i - \sum_{i=0}^{t-1}B_i^t \big(R(x_i) - A\Delta_i\big) - \sum_{i=0}^{t-1}B_i^t \zeta_i
	\end{align*}
	and hence
	\begin{equation}\label{eq:polyak_decomp}
		\begin{split}
			\sqrt{t}(\bar x_{t} - x^{\star}) + A^{-1}\bigg(\!\frac{1}{\sqrt{t}}\sum_{i=0}^{t-1}\xi_i\!\bigg) &= \frac{1}{\sqrt{t}}B_t(x_0 - \xs) - \frac{1}{\sqrt{t}}\sum_{i=0}^{t-1}A_i^t\xi_i \\
			&~~~~{-}\frac{1}{\sqrt{t}}\sum_{i=0}^{t-1}B_i^t \big(R(x_i) - A(x_i - \xs)\big)
			- \frac{1}{\sqrt{t}}\sum_{i=0}^{t-1}B_i^t \zeta_i.
		\end{split}
	\end{equation}

	We claim that the right-hand side of \eqref{eq:polyak_decomp} is $o_{\PP}(1)$ as $t\to\infty$. Indeed, since $A$ is positively stable and the step size condition~\ref{assmp:polyak_step_size} of Assumption~\ref{assmp:polyak_lin_conv} holds, it follows from Lemma~1 of \cite{polyak92} that the collection of matrices $\{A_i^t, B_i^t, B_t \mid t\geq 1, i\geq 0\}$ is bounded with respect to the operator norm and
	\begin{equation}\label{eq:polyak_lemma_1}
		\lim_{t\to\infty}\frac{1}{t}\sum_{i=0}^{t-1}\|A_i^t\|_\text{op} = 0.
	\end{equation}
	Let $C = \sup\{\|A_i^t\|_{\text{op}}, \|B_i^t\|_{\text{op}}, \|B_t\|_{\text{op}}, {\EE}\|\xi_i\|^2 \mid t\geq 1, i\geq 0\}$; by the $L^2$-boundedness condition~\ref{cond1polyakstoc} of Assumption~\ref{assmp:polyakstoc}, we have $C<\infty$. Therefore
	\begin{equation}\label{eq:op1_rhs1}
		\left\|\frac{1}{\sqrt{t}}B_t(x_0 - \xs)\right\| \leq \frac{C\|x_0 - \xs\|}{\sqrt{t}} \xlongrightarrow{\text{a.s.}} 0 \qquad\text{as~}t\to\infty,
	\end{equation}
	and since $(\xi_{i})_{i\geq0}$ is a martingale difference sequence, we deduce from \eqref{eq:polyak_lemma_1} the following convergence in mean square:
	\begin{equation*}
		{\EE}\left\|\frac{1}{\sqrt{t}}\sum_{i=0}^{t-1}A_i^t\xi_i\right\|^2  = \frac{1}{t}\sum_{i=0}^{t-1}{\EE}\|A_i^t\xi_i\|^2 \leq \frac{C}{t}\sum_{i=0}^{t-1}\|A_i^t\|^{2}_\text{op} \leq \frac{C^2}{t}\sum_{i=0}^{t-1}\|A_i^t\|_\text{op} \to 0 \qquad\text{as~}t\to\infty,
	\end{equation*}
	which by Markov's inequality implies
	\begin{equation}\label{eq:op1_rhs2}
		\frac{1}{\sqrt{t}}\sum_{i=0}^{t-1}A_i^t\xi_i \xlongrightarrow{p} 0 \qquad\text{as~}t\to\infty.
	\end{equation}
	Moreover, the linear approximation condition~\ref{assmp:polyaksmooth} of Assumption~\ref{assmp:polyak_lin_conv} implies
	\begin{equation}\label{eq:op1_rhs3}
		\left\|\frac{1}{\sqrt{t}}\sum_{i=0}^{t-1}B_i^t \big(R(x_i) - A(x_i - \xs)\big)\right\| \leq \frac{C}{\sqrt{t}}\sum_{i=0}^{t-1} \|R(x_i) - A(x_i - \xs)\| \xlongrightarrow{p} 0 \qquad\text{as~}t\to\infty,
	\end{equation}
	while the negligible residual condition~\ref{cond4polyakstoc} of Assumption~\ref{assmp:polyakstoc} implies
	\begin{equation}\label{eq:op1_rhs4}
		\left\|\frac{1}{\sqrt{t}}\sum_{i=0}^{t-1}B_i^t \zeta_i \right\| \leq \frac{C}{\sqrt{t}}\sum_{i=0}^{t-1} \|\zeta_i\| \xlongrightarrow{p} 0 \qquad\text{as~}t\to\infty.
	\end{equation}
	By \eqref{eq:op1_rhs1}--\eqref{eq:op1_rhs4}, we conclude that the right-hand side of \eqref{eq:polyak_decomp} is $o_{\PP}(1)$ as $t\to\infty$, so
	\begin{equation*}
		\sqrt{t}(\bar x_{t} - x^{\star}) = -A^{-1}\bigg(\frac{1}{\sqrt{t}}\sum_{i=0}^{t-1}\xi_i\bigg) + o_{\PP}(1) \qquad\text{as~}t\to\infty.
	\end{equation*}

	Finally, by virtue of Assumption~\ref{assmp:polyakstoc}, we may apply the martingale central limit theorem (Theorem~\ref{thm:clt}) to the square-integrable martingale $M_t = \sum_{i=0}^{t-1}\xi_i$ to obtain $t^{-1/2}M_t\leadsto\mathsf{N}(0,\Sigma)$ and hence, by the continuous mapping theorem \cite[see][Theorem~2.3]{van2000asymptotic},
	\begin{equation*}
		-A^{-1}\bigg(\!\frac{1}{\sqrt{t}}\sum_{i=0}^{t-1}\xi_i\!\bigg)\leadsto\mathsf{N}\big(0, A^{-1}\Sigma A^{-\top}\big) \qquad\text{as~}t\to\infty.
	\end{equation*}
	This completes the proof.
\end{proof}

\section{Proofs Deferred from Section~\ref{sec:optimality}}

This appendix presents contains all of the proofs deferred from Section~\ref{sec:optimality}. We assume throughout that the assumptions used in Section~\ref{sec:optimality} are valid; in particular, $\X$ is compact, $\Z$ is bounded, and $g\in\cG$ (see Definition~\ref{def:G_n}). To begin, we present three preliminary lemmas.

\begin{lemma}\label{lem:ultra-bounded}
We have
\begin{equation*}\label{eq:ultra-bounded}
	\sup_{x \in \cX,\, z \in \cZ} \|g_{x}(z)\|<\infty \qquad\text{and}\qquad  \sup_{x \in \cX,\, z \in \cZ} \|\nabla_x g_{x}(z)\|_\textup{op}<\infty.
\end{equation*}
\end{lemma}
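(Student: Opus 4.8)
The plan is to decouple the two variables. The possibly‑noncompact space $\cZ$ will be tamed using the uniform Lipschitz estimates that $g\in\cG$ enjoys in its second argument, while the $x$‑dependence will be handled via compactness of $\cX$ together with the smoothness of $g$ in its first argument. Both suprema then follow from the same two‑step pattern: Lipschitz continuity in $z$ reduces the problem to a single representative point $z_0\in\cZ$, after which a one‑dimensional estimate over the compact set $\cX$ finishes the job.

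For the first bound I would invoke the unbiasedness of $g$. Fix $x\in\cX$ and $z\in\cZ$. Since $\underset{z'\sim\D(x)}{\EE}[g_x(z')]=0$ and each section $g_x(\cdot)$ is $\beta_g$‑Lipschitz on $\cZ$, Jensen's inequality gives
\[
\|g_x(z)\| \;=\; \Bigl\|\underset{z'\sim\D(x)}{\EE}\bigl[g_x(z)-g_x(z')\bigr]\Bigr\| \;\leq\; \underset{z'\sim\D(x)}{\EE}\,\|g_x(z)-g_x(z')\| \;\leq\; \beta_g\,\underset{z'\sim\D(x)}{\EE}\,d_{\cZ}(z,z') \;\leq\; \beta_g\,\diam(\cZ),
\]
the last step using that $\cZ$ is bounded. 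Taking the supremum over $(x,z)$ establishes $\sup_{x,z}\|g_x(z)\|\leq\beta_g\,\diam(\cZ)<\infty$.

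For the second bound I would first collapse $\cZ$ to a representative $z_0\in\cZ$. Since $\nabla_x g(x,\cdot)$ is $\beta'_g$‑Lipschitz on $\cZ$ for every $x$, we have $\|\nabla_x g_x(z)\|_{\rm op}\leq\|\nabla_x g(x,z_0)\|_{\rm op}+\beta'_g\,\diam(\cZ)$ for all $x\in\cX$ and $z\in\cZ$, so it suffices to bound $\sup_{x\in\cX}\|\nabla_x g(x,z_0)\|_{\rm op}$. But $g(\cdot,z_0)$ is $\Lambda_g(z_0)$‑smooth on $\cX$ with $\Lambda_g(z_0)<\infty$ (the function $\Lambda_g$ is $[0,\infty)$‑valued), so $x\mapsto\nabla_x g(x,z_0)$ is Lipschitz — hence continuous — on the compact set $\cX$; fixing any reference point $x_0\in\cX$ yields $\|\nabla_x g(x,z_0)\|_{\rm op}\leq\|\nabla_x g(x_0,z_0)\|_{\rm op}+\Lambda_g(z_0)\,\diam(\cX)$ for all $x$, a finite constant. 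Combining the two estimates gives $\sup_{x,z}\|\nabla_x g_x(z)\|_{\rm op}<\infty$. I do not anticipate a genuine obstacle here: the lemma merely repackages the definition of $\cG$ with the compactness/boundedness hypotheses. The only points deserving a sentence are the integrability checks needed for Jensen's inequality ($g_x$ is $\D(x)$‑integrable by the definition of $\cG$, and $d_{\cZ}(z,\cdot)$ is bounded since $\cZ$ is bounded) and the reminder that, in this paper's conventions, ``smooth on $\cX$'' means $g(\cdot,z_0)$ extends differentiably to a neighborhood of $\cX$, so its Jacobian is genuinely defined and continuous on all of $\cX$, boundary included.
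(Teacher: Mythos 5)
Your proof is correct. The bound on $\sup_{x,z}\|\nabla_x g_x(z)\|_{\rm op}$ is essentially the paper's argument: a triangle inequality through a base point, paying $\beta'_g\,\diam(\cZ)$ for the $z$-direction and $\Lambda_g(z_0)\,\diam(\cX)$ for the $x$-direction (the paper just groups the terms in the other order, anchoring at a single pair $(x^\circ,z^\circ)$). Where you genuinely diverge is the bound on $\sup_{x,z}\|g_x(z)\|$: you exploit the unbiasedness condition ${\EE}_{z'\sim\D(x)}[g_x(z')]=0$ from Definition~\ref{def:G_n} together with $\beta_g$-Lipschitzness in $z$ to get the clean estimate $\beta_g\,\diam(\cZ)$, needing neither compactness of $\cX$ nor the Jacobian bound. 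The paper instead derives the Jacobian bound $M'_g$ first and then bounds $\|g_x(z)\|$ by $\|g_{x^\circ}(z^\circ)\| + M'_g\,\diam(\cX) + \beta_g\,\diam(\cZ)$, deliberately avoiding unbiasedness. The trade-off: your argument is shorter and sharper for $g\in\cG$, but the paper's mean-zero-free version is the one that is later recycled verbatim in the proof of Lemma~\ref{lem:oh-the-smoothness}, where "exactly the same argument" is applied to maps $T$ (such as $T=G$) that satisfy the Lipschitz/smoothness conditions but are not unbiased; your first-bound argument would not transfer to that setting, though for the lemma as stated it is perfectly valid. Your closing remarks on integrability for Jensen and on the meaning of "smooth on $\cX$" are accurate and harmless.
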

  \begin{proof}
    Fix $x^{\circ} \in \cX$ and $z^\circ \in \cZ$. Since $\cX$ and $\cZ$ are bounded, we compute
    \begin{align*}
      M'_{g} := \sup_{x \in \cX,\, z \in \cZ} \|\nabla_x g_{x} (z)\|_\text{op} &\leq
      \begin{aligned}[t]
      \|\nabla_x g_{x^{\circ}} (z^{\circ})\|_\text{op} &+ \sup_{x \in \cX}\|\nabla_x g_{x } (z^{\circ}) -  \nabla_x g_{ x^{\circ}} (z^{\circ})\|_\text{op} \\ &+ \sup_{x \in \cX,\, z \in \cZ}\|\nabla_x g_{x} (z) - \nabla_x g_{x} (z^{\circ})\|_\text{op}
      \end{aligned}
 \\
                 &\leq \|\nabla_x g_{x^{\circ}} (z^{\circ})\|_\text{op} + \Lambda_g(z^{\circ})\diam(\cX)  + \beta'_{g}\diam(\cZ)<\infty.
    \end{align*}
 Hence every section $g(\cdot, z)$ is $M'_{g}$-Lipschitz on $\cX$, and the estimate
      \begin{align*}
      M_{g} :=\sup_{x \in \cX,\, z \in \cZ} \|g_{x} (z)\| &\leq \|g_{x^{\circ}} (z^{\circ})\| +  \sup_{x \in \cX}\|g_{x } (z^{\circ}) -  g_{ x^{\circ}} (z^{\circ})\| + \sup_{x \in \cX,\, z \in \cZ}\|g_{x} (z) - g_{x} (z^{\circ})\| \\
                 &\leq \|g_{x^{\circ}} (z^{\circ})\| +  M'_{g}\diam(\cX)  + \beta_g\diam(\cZ)
    \end{align*}
completes the proof. \end{proof}

\clearpage
  \begin{lemma}\label{lem: uniform C(x,u)}
   Let $L_{h}=\sup |h'|$, $L_{h''}=\sup |h'''|$, $A_{g} = \sup_{x \in \cX}\EE_{z\sim\D_x}\!\|g_{x} (z)\|$, and $B_{g} = \sup_{x \in \cX}\EE_{z\sim\D_x}\!\|g_{x} (z)\|^3$. 
   \begin{enumerate}[label=(\roman*)]  
   \item Let $u\in\R^d$. Then
   \begin{equation}\label{eq:h-bound-0}
	|h(u^{\top}g_x(z))|\leq L_{h}\|g_{x}(z)\|\|u\|
	\end{equation}  
	for all $x\in\cX$ and $z\in\cZ$ and hence 
   \begin{align}\label{eq:h-bound-1}
	\sup_{x\in\cX} {\mathop{\EE}_{z \sim \cD_{x}}}|h(u^{\top}g_x(z))|\leq L_hA_g\|u\| = O(\|u\|).
	\end{align}	
	\item For each $x\in \cX$, the function $u\mapsto C_x^u = 1 + {\EE}_{z\sim \D_x} h(u^\top g_{x}(z))$ is $C^{2}$-smooth on $\R^d$ with $L_{h''} B_{g}$-Lipschitz continuous Hessian, and we have $C_x^0=1$, $\nabla_uC_x^u|_{u=0}=0$, and $\nabla_{uu}^{2}C_x^u|_{u=0}=0$. Therefore 
	\begin{equation}\label{eq:C-bound-1}
		\sup_{x\in\mathcal{X}}\left|{\EE}_{z\sim \D_x} h(u^{\top}g_{x}(z))\right| \leq \frac{L_{h''} B_{g}}{6}\|u\|^3
	\end{equation}
	for all $u\in\R^d$ and hence
   \begin{equation}\label{eq:C-little-o}
	\sup_{x\in\cX}\frac{1}{C_x^u} = 1 + O(\|u\|^3)\qquad\text{as~}u\rightarrow0.
	\end{equation}
   \end{enumerate}
\end{lemma}
\begin{proof}
Note first that $h(t)=t$ for all $t$ in a neighborhood of zero and the first three derivatives of $h$ are bounded by assumption, while $A_g, B_g < \infty$ by Lemma~\ref{lem:ultra-bounded}. Since $h(0)=0$ and $h$ is $L_h$-Lipschitz continuous, the inequalites \eqref{eq:h-bound-0} and \eqref{eq:h-bound-1}  follow immediately. Next, let $x\in\cX$ and observe that the dominated convergence theorem yields
$$
\nabla_u\Big( \mathop{\EE}_{z\sim \D_x} h\big(u^{\top} g_{x}(z)\big) \Big) = \mathop{\EE}_{z\sim \D_x} h'\big(u^{\top}g_{x}(z)\big)g_{x}(z)
$$
and
$$
\nabla_{uu}^2\Big( \mathop{\EE}_{z\sim \D_x} h\big(u^{\top} g_{x}(z)\big) \Big) = \mathop{\EE}_{z\sim \D_x} h''\big(u^{\top} g_{x}(z)\big)g_{x}(z)g_{x}(z)^\top
$$
for all $u\in\R^d$. Thus, $u\mapsto C_{x}^{u}$ is $C^2$-smooth on $\RR^d$, and since $h''$ is $L_{h''}$-Lipschitz continuous, it follows at once that $u\mapsto \nabla_{uu}^2 C_{x}^{u}$  is $L_{h''} B_{g}$-Lipschitz continuous on $\R^d$.

Clearly $C_x^0=1$ since $h(0)=0$. Further,
$$
\nabla_uC_x^u|_{u=0} = \nabla_u\Big( \mathop{\EE}_{z\sim \D_x} h\big(u^{\top} g_{x}(z)\big) \Big)\Big|_{u=0} = \mathop{\EE}_{z\sim \D_x} g_{x}(z) = 0
$$
since $h'(0) = 1$ and $g\in\cG$, while 
$$
\nabla_{uu}^{2}C_x^u|_{u=0} = \nabla_{uu}^2\Big( \mathop{\EE}_{z\sim \D_x} h\big(u^{\top} g_{x}(z)\big) \Big)\Big|_{u=0} = 0
$$
since $h''(0) = 0$. The second-order Taylor polynomial of the function $u\mapsto{\EE}_{z\sim \D_x} h(u^{\top} g_{x}(z))$ about $u=0$ is therefore identically zero, so $L_{h''} B_{g}$-Lipschitzness of the Hessian implies \eqref{eq:C-bound-1}. Finally, the estimate
$$
\frac{1}{1+t} = 1 - \frac{t}{1+t} \leq 1 + 2|t|\qquad\text{~for all~}t\geq{-}\frac{1}{2}
$$
together with \eqref{eq:C-bound-1} yields $\eqref{eq:C-little-o}$.
\end{proof}

\begin{lemma}\label{lem:yet-another-bound}
  Let $f\colon (0,\infty)\rightarrow \RR$ be a function that is $C^{2,1}$-smooth around $t=1$ and satisfies $f(1) = 0.$ Then for all sufficiently small $u\in\R^d$ and all $x\in \cX$, we have
  \begin{equation}
    \label{eq:taylor-expanding-is-nice}
  \int f\bigg(\frac{1+u^{\top}g_{x}(z)}{C_{x}^{u}}\bigg)\,d\D_{x}(z)  = \frac{f''(1)}{2} u^{\top}\bigg(\mathop{\EE}_{z\sim \D_{x}}g_{x}(z)g_{x}(z)^{\top}\bigg)u + r_{x}(u),
  \end{equation}
 where $\sup_{x \in \cX}|r_{x}(u)| = O(\|u\|^{3})$ as $u\rightarrow 0$.
\end{lemma}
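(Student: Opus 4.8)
The plan is to observe that for $u$ near zero both the cutoff $h$ and the normalizing constant $C_x^u$ in \eqref{eq:perturbation-of-the-force} become trivial, which reduces the left-hand side of \eqref{eq:taylor-expanding-is-nice} to $\int f\big(1+u^\top g_x(z)\big)\,d\D_x(z)$, and then to carry out a second-order Taylor expansion of $f$ at $t=1$. Concretely, Lemma~\ref{lem:ultra-bounded} gives $M_g := \sup_{x\in\cX,\,z\in\cZ}\|g_x(z)\|<\infty$. Fix $\delta>0$ such that $f$ is $C^3$ on $(1-\delta,1+\delta)$ and set $\delta_0 := \min\{\tfrac12,\tfrac\delta2\}$; then $|u^\top g_x(z)|\le\delta_0$ for all $x,z$ whenever $\|u\|\le\delta_0/M_g$. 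For such $u$ the definition of $h$ gives $h(u^\top g_x(z)) = u^\top g_x(z)$, and the unbiasedness of $g\in\cG$ (Definition~\ref{def:G_n}) then gives
$$C_x^u = 1 + \underset{z\sim\D_x}{\EE}\big[u^\top g_x(z)\big] = 1 + u^\top\underset{z\sim\D_x}{\EE}\big[g_x(z)\big] = 1,$$
uniformly in $x\in\cX$, so the argument of $f$ in \eqref{eq:taylor-expanding-is-nice} is exactly $1+u^\top g_x(z)$.

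Next I would Taylor-expand. Since $f$ is $C^3$ on $(1-\delta,1+\delta)$ with $f(1)=f'(1)=0$, Taylor's theorem with Lagrange remainder gives, for $|s|\le\delta_0$,
$$f(1+s) = \tfrac12 f''(1)\,s^2 + \rho(s),\qquad |\rho(s)|\le\tfrac{A_f}{6}|s|^3,\quad A_f := \max_{|t-1|\le\delta_0}|f'''(t)|<\infty.$$
Substituting $s = u^\top g_x(z)$ (valid since $|s|\le\|u\|M_g\le\delta_0$), integrating against $\D_x$, and using the identity $\int (u^\top g_x(z))^2\,d\D_x(z) = u^\top\big(\underset{z\sim\D_x}{\EE}[g_x(z)g_x(z)^\top]\big)u$, one arrives at \eqref{eq:taylor-expanding-is-nice} with
$$r_x(u) := \int\rho\big(u^\top g_x(z)\big)\,d\D_x(z),\qquad |r_x(u)|\le\tfrac{A_f M_g^3}{6}\|u\|^3.$$
Since the last bound is independent of $x$ and is $o(\|u\|^2)$ as $u\to0$, we obtain $\sup_{x\in\cX}|r_x(u)| = o(\|u\|^2)$, which is the claim; boundedness of $g_x$ also disposes of any integrability concerns.

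All the steps are elementary estimates, so there is no genuine obstacle; the only thing to watch is that every supremum over $x\in\cX$ (of $\|g_x(z)\|$ and of the Taylor remainder) is finite and $x$-independent, which is exactly what Lemma~\ref{lem:ultra-bounded} and the finite bound on $f'''$ near $1$ provide. A slightly longer route that avoids the identity $C_x^u=1$ would be to expand $u\mapsto\tfrac{1+u^\top g_x(z)}{C_x^u}$ directly to second order, but the unbiasedness shortcut above keeps the computation minimal.
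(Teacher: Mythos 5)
Your proof is correct, and it takes a genuinely different and more elementary route than the paper's. The key observation you exploit is that for $\|u\|\leq \delta_0/M_g$ the cutoff $h$ is exactly the identity on the range of $u^{\top}g_x(z)$, so that the unbiasedness of $g\in\cG$ forces $C_x^u=1+u^{\top}\EE_{z\sim\D_x}[g_x(z)]=1$ \emph{exactly}, uniformly in $x$; after that, a scalar Taylor expansion of $f$ at $t=1$ with Lagrange remainder inside the integral immediately yields the expansion with $|r_x(u)|\leq \tfrac{A_f M_g^3}{6}\|u\|^3$, which is the required uniform $o(\|u\|^2)$. The paper instead keeps $C_x^u$ as a genuine function of $u$: it defines $\varphi_x(u)=\EE_{z\sim\D_x}f\big(\tfrac{1+u^{\top}g_x(z)}{C_x^u}\big)$, differentiates twice under the integral sign via dominated convergence, evaluates the gradient and Hessian at $u=0$ using $C_x^0=1$, $\nabla_u C_x^u|_{u=0}=0$ (which is where the paper uses unbiasedness, only at the origin) and $f(1)=f'(1)=0$, and then controls the second-order Taylor remainder through an $x$-independent Lipschitz bound on $\nabla^2_{uu}\varphi_x$ supplied by Lemmas~\ref{lem:ultra-bounded} and \ref{lem: uniform C(x,u)}. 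Your argument is shorter and avoids both the differentiation under the integral and the Hessian bookkeeping; the paper's argument does not rely on the identity $C_x^u\equiv 1$ on a neighborhood and reuses the same derivative machinery that is needed anyway for the general-$u$ computations elsewhere in the optimality proof (e.g.\ Lemmas~\ref{lem:smooth-kappa} and \ref{lem:LAN-new}). The only points to keep explicit in your write-up are the ones you already flag: the radius must be chosen so that both $|u^{\top}g_x(z)|\leq 1/2$ (so $h$ is the identity) and $1+u^{\top}g_x(z)$ stays in the interval where $f$ is $C^3$, and the bound $A_f$ on $f'''$ is finite on that compact interval; since the lemma only makes an assertion for sufficiently small $u$ and an asymptotic claim as $u\to0$, restricting to this neighborhood loses nothing.
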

\begin{proof}
  Fix $x\in \cX$ and define $\varphi_{x}(u) := {\EE}_{z \sim \D_{x}} f\Big(\frac{1+u^{\top}g_{x}(z)}{C_{x}^{u}}\Big)$. By the dominated convergence theorem, $\varphi_{x}$ is $C^{2}$-smooth on a neighborhood of zero with
  $$\nabla_{u} \varphi_{x}(u) = {\mathop{\EE}_{z\sim \D_x}}\bigg[f'\bigg(\frac{1+ u^{\top}g_{x}(z)}{C_{x}^{u}}\bigg)\bigg(\frac{g_{x}(z) C_{x}^{u} - \big(1 + u^{\top}g_{x}(z)\big) \nabla_{u} C_{x}^{u}}{(C_{x}^{u})^{2}}\bigg)\bigg]$$
  and $(C_{x}^{u})^{4}\cdot\nabla_{uu}^{2} \varphi_{x}(u)$ equal to
\begin{align*}
&\!\mathop{\EE}_{z\sim \D_x}\!\left[f''\bigg(\frac{1+ u^{\top}g_{x}(z)}{C_{x}^{u}}\bigg)\Big(g_{x}(z) C_{x}^{u} - \big(1 + u^{\top}g_{x}(z)\big) \nabla_{u} C_{x}^{u} \Big)\Big(g_{x}(z) C_{x}^{u} - \big(1 + u^{\top}g_{x}(z)\big) \nabla_{u} C_{x}^{u}\Big)^{\top}\right. \\
    &\hspace{.95cm}+\left. f'\bigg(\frac{1+ u^{\top}g_{x}(z)}{C_{x}^{u}}\bigg)\bigg((C_{x}^{u})^{2}\Big( g_x(z)(\nabla_u C_x^u)^{\top} - (\nabla_u C_x^u)g_x(z)^{\top} -  \big(1+u^{\top}g_{x}(z)\big) \nabla^{2}_{uu} C_{x}^{u}\Big) \right. \\
    &\hspace{5cm}-\left.
     2C_{x}^{u}\Big( g_{x}(z) C_{x}^{u} - \big(1 + u^{\top} g_{x}(z)\big) \nabla_{u}C_{x}^{u}\Big) (\nabla_{u}C_{x}^{u})^{\top} \bigg)\right]\!.
  \end{align*}
  Thus, taking a second-order Taylor expansion of $\varphi_x$ at $u = 0$ with remainder $r_x$ and applying the equalities $C_x^0=1$, $\nabla_uC_x^u|_{u=0}=0$, $\nabla_{uu}^{2}C_x^u|_{u=0}=0$, and $f(1) = 0$ yields \eqref{eq:taylor-expanding-is-nice}. It remains to verify $\sup_{x \in \cX}|r_{x}(u)| = O(\|u\|^{3})$ as $u\rightarrow 0$.

  Lemmas~\ref{lem:ultra-bounded} and \ref{lem: uniform C(x,u)} ensure that $C_{x}^{u}, \nabla_u C_{x}^{u},$ and $\nabla_{uu}^{2} C_{x}^{u}$ are Lipschitz continuous and bounded on a compact neighborhood of $u=0$, with Lipschitz constants and bounds independent of $x$. Further, since $f$ is $C^{2,1}$-smooth around $t=1$, we have that $f'$ and $ f''$ are Lipschitz continuous and bounded on a compact neighborhood of $t=1$. It follows that $\nabla^{2}_{uu}\varphi_{x}$ is $\tilde L$-Lipschitz on a neighborhood $U$ of $u=0$, with constant $\tilde L$ independent of $x$. Thus we deduce
  $
  |r_{x}(u)| \leq \frac{\tilde L}{6}\|u\|^{3}
  $ for all $(x,u)\in \cX\times U$, and the result follows.
\end{proof}

\subsection{Proof of Lemma~\ref{lem:spain}}\label{sec:proof_of_admis}

The proof of this lemma is divided into four steps:
the first step verifies Assumption~\ref{assmp:lipdist}  and the next three steps establish Assumption~\ref{assmp:lipgrad}. The strategy in all steps is to prove that various quantities of interest change continuously with $u$ near zero. One of the main tools we will use to this end is the following elementary lemma (which we will also use crucially later in the proof of Lemma~\ref{lem:smooth-kappa}). Its proof consists of several applications of the dominated convergence theorem and is deferred to Section~\ref{proof:oh-the-smoothness}.

  \begin{lemma}[Inferring smoothness]\label{lem:oh-the-smoothness}
    Suppose that $T \colon \cX \times \cZ \rightarrow \RR^n$ is a map satisfying the following two conditions.
    \begin{enumerate}[label=(\roman*)]
      \item (\textbf{Lipschitz continuity}) There exists a constant $\beta_T\geq0$ such that for every $x\in\X$, the section $T(x,\cdot)$ is $\beta_T$-Lipschitz on $\cZ$.
      \item (\textbf{Smoothness}) There exist a measurable function $\Lambda_T\colon\cZ\rightarrow[0,\infty)$ and constants $\bar{\Lambda}_T, \beta'_T \geq 0$ such that for every $z \in \cZ$ and $x\in\X$, the section $T(\cdot, z)$ is $\Lambda_T(z)$-smooth on $\X$ with $\EE_{z \sim \D_{x}}[\Lambda_T(z)] \leq \bar{\Lambda}_T$, and the section $\nabla_x T(x,\cdot)$ is $\beta'_T$-Lipschitz on $\Z$.
\end{enumerate}
    Set $$M_T := \sup_{x\in\cX,\,z\in\cZ} \|T(x,z)\| \qquad\text{and}\qquad M'_T := \sup_{x\in\cX,\,z\in\cZ} \|\nabla_x T(x,z)\|_\textup{op}.$$ Then $M_T$ and $M'_{T}$ are finite. Moreover, given any fixed compact neighborhood $\cW\subset\R^d$ of zero, the maps $\bar H\colon \cX \times \cX \times \cW \rightarrow \RR^n$ and $H\colon \cX \times \cW \rightarrow \R^n$ given by
    $$
    \bar H(x,y, u) = {\mathop{\EE}_{z \sim \cD_{x}}} \big[ T(y, z) \big(1+ h\big(u^{\top} g_{y}(z)\big)\big)\big]\qquad\text{and}\qquad H(x, u) = \mathop{\EE}_{z \sim \cD_{x}^{u}} T(x, z)
    $$
    are smooth with Lipschitz continuous Jacobians with constants depending  on $T$ only through $\beta_T, \bar{\Lambda}_T, \beta'_T$, $M_T$, and $M'_T$; further, we have
    $$
    \nabla_x H(x, 0 ) = \nabla_x \Big( \mathop{\EE}_{z\sim \D_x}\!T(x,z)\Big)\qquad\text{and}\qquad \nabla_u H(x, 0) = {\mathop{\EE}_{z\sim \D_{x}}}\!\big[T(x,z)g_{x}(z)^{\top}\big]
    $$
    for all $x\in\cX$.
  \end{lemma}

  \paragraph{Step 1 (Assumption~\ref{assmp:lipdist})} First, we show that the perturbed distribution map $\D^{u}$ satisfies Assumption~\ref{assmp:lipdist} with Lipschitz constant $\gamma^u = \gamma + O(\|u\|)$ as $u\rightarrow0$, where $\gamma$ is the Lipschitz constant for $\cD$. To this end, we take $\cW$ to be the unit ball in $\R^d$ and apply Lemma~\ref{lem:oh-the-smoothness} to identify a constant $L_1\geq0$ such that for every $1$-Lipschitz function $\phi \in \text{Lip}_1(\Z)$ and every $u\in\cW$, the function 
  $$
  \rho_{\phi}(x, u) := \mathop{\EE}_{z \sim \cD_{x}^{u}} \phi(z) $$
  is Lipschitz in the $x$-component with constant $\gamma^u:=\gamma + L_1\|u\|$. Indeed, for every $\phi \in \text{Lip}_1(\Z)$, the translate $\bar{\phi}= \phi - \inf\phi$ is $1$-Lipschitz and bounded by $\diam(\cZ)$, and $\rho_{\bar{\phi}} = \rho_{\phi} - \inf\phi$. Thus, Lemma~\ref{lem:oh-the-smoothness} yields a constant $L_{1}$ such that for every $\phi \in \text{Lip}_1(\Z)$, the function $\rho_{\bar{\phi}}$ is $L_{1}$-smooth on $\cX\times\cW$ and hence so is $\rho_{\phi}$. Moreover, Lemma~\ref{lem:oh-the-smoothness} shows $$\nabla_{x} \rho_{\phi}(x, 0) = \nabla_{x}\Big( \mathop{\EE}_{z\sim \D_x}\!\phi(z)\Big)$$ for all $x\in\cX$, so $\sup_{x\in\cX}\|\nabla_{x} \rho_{\phi}(x, 0)\|\leq\gamma$ by Assumption~\ref{assmp:lipdist}. Thus, the triangle inequality yields 
  \begin{align*}
    \|\nabla_{x} \rho_{\phi}(x, u)\| &\leq \|\nabla_{x} \rho_{\phi}(x, 0)\| + \|\nabla_{x} \rho_{\phi}(x, u) - \nabla_{x} \rho_{\phi}(x, 0)\| 
    \leq \gamma + L_{1}\|u\| 
    = \gamma^u
  \end{align*}
for all $(x,u)\in\cX\times\cW$. Therefore $\rho_{\phi}(\cdot, u)$ is $\gamma^u$-Lipschitz on $\cX$ for all $\phi \in \text{Lip}_1(\Z)$ and $u\in\cW$, so $\cD^u$ satisfies Assumption~\ref{assmp:lipdist} with Lipschitz constant $\gamma^u = \gamma + O(\|u\|)$ as $u\rightarrow0$.

 \paragraph{Step 2 (Lipschitz continuity)} Next, we establish Assumption~\ref{assmp:lipgrad}\ref{assmp:lipgrad1} for the problem with the perturbed distribution map $\D^{u}$. Observe that the Lipschitz bounds in Assumption~\ref{assmp:lipgrad}\ref{assmp:lipgrad1} remain unchanged, and that we only need to identify for all sufficiently small $u$ a constant $\bar{L}^u\geq0$ such that $\sup_{x\in\cX}{\EE}_{z \sim \D_{x}^{u}}[L(z)^2]\leq (\bar{L}^{u})^2$. We will show more, namely, that we can select $(\bar{L}^u)^2 = \bar{L}^2 + O(\|u\|)$ as $u\rightarrow 0$, where $\bar{L}$ is the constant satisfying $\sup_{x\in\cX}{\EE}_{z \sim \D_{x}}[L(z)^2]\leq \bar{L}^2$ furnished by Assumption~\ref{assmp:lipgrad}\ref{assmp:lipgrad1}. Indeed, for all $x\in\cX$ and $u\in\R^d$, we have
 $$
 {\mathop{\EE}_{z \sim \cD_{x}^{u}}}\big[L(z)^2\big] = \frac{1}{C_{x}^{u}}  {\mathop{\EE}_{z \sim \cD_{x}}}\!\big[ L(z)^2 \big(1+ h\big(u^{\top} g_{x}(z)\big)\big)\big].
 $$
Thus, an application of Lemma~\ref{lem: uniform C(x,u)} yields
$$
  \sup_{x\in\cX} {\mathop{\EE}_{z \sim \cD_{x}^{u}}}\big[L(z)^2\big] \leq \big(1+O(\|u\|^3)\big)\big(1 + L_h M_g\|u\|\big)\bar{L}^2 = \bar{L}^2 + O(\|u\|)\qquad\text{as~}u\rightarrow0,
 $$
where $L_{h}=\sup |h'|$ and $M_{g} =\sup \|g\|$.

 \paragraph{Step 3 (Monotonicity)} We prove that for all $x\in\cX$, the map $G_{x}^{u}(\cdot)$ given by $$ G_{x}^{u}(y):=\underset{z \sim \D_{x}^{u}}{\EE}G(y,z)$$ is strongly monotone on $\cX$ with constant $\alpha^u = \alpha + O(\|u\|)$ as $u\rightarrow0$, where $\alpha$ is the strong monotonicity constant of $G_{x}(\cdot)$. Given $x\in\cX$ and $u\in\R^d$, we have
 \begin{align*}
 \big\langle G_x^u(y) - G_x^u(y'), y - y' \big\rangle &=  \big\langle G_x(y) - G_x(y'), y - y' \big\rangle \\
 &~~~~~~~~~~~~~+  \big\langle \big(G_x^u(y) - G_x(y)\big) - \big(G_x^u(y') - G_x(y')\big), y - y' \big\rangle \\
 &\geq \alpha\|y-y'\|^2 - \big\|\big(G_x^u(y) - G_x(y)\big) - \big(G_x^u(y') - G_x(y')\big) \big\|\cdot\big\|y - y'\big\|
 \end{align*}
 for all $y,y'\in\cX$ by the $\alpha$-strong monotonicity of $G_x(\cdot)$. We claim that for all sufficiently small $u$, there exists $\ell^u = O(\|u\|)$ independent of $x$ such that the map $y\mapsto G_x^u(y) - G_x(y)$ is $\ell^u$-Lipschitz on $\cX$ for all $x\in\cX$. Indeed, upon noting $\sup_{x\in\cX,\,z\in\cZ} \|\nabla_x G(x,z)\|_\textup{op}<\infty$ (see Lemma~\ref{lem:oh-the-smoothness}) and applying the dominated convergence theorem together with Lemma~\ref{lem: uniform C(x,u)}, we obtain
\begin{align*}
 \ell^u &:= \sup_{x,y\in\cX} \big\|\nabla_y\big(G_x^u(y)-G_x(y)\big)\big\|_\text{op} \\ &= \sup_{x,y\in\cX}\left\|\frac{1}{C_x^u}\underset{z \sim \D_{x}}{\EE} \big[\nabla_y G(y,z) \big(1 + h\big(u^{\top} g_x(z)\big)\big)\big]- \underset{z \sim \D_{x}}{\EE} \big[\nabla_y G(y,z)\big]\right\|_\text{op}\\
 &\leq \underbrace{\sup_{x,y\in\cX}\left\|\bigg(\frac{1}{C_x^u}-1\bigg)\underset{z \sim \D_{x}}{\EE} \big[\nabla_y G(y,z)\big]\right\|_\text{op}}_{O(\|u\|^3)} + \underbrace{\sup_{x,y\in\cX}\left\|\frac{1}{C_x^u}\underset{z \sim \D_{x}}{\EE} \big[\nabla_y G(y,z)h\big(u^{\top} g_x(z)\big)\big]\right\|_\text{op}}_{\left(1+O(\|u\|^3)\right)\,\cdot\, O(\|u\|)} \\
 &=O(\|u\|) \qquad\text{as~}u\rightarrow0.
\end{align*}
Setting $\alpha^u := \alpha - \ell^u$ for all $u$ in a neighborhood of zero, we conclude that for all $x,y,y'\in\cX$, 
$$
\big\langle G_x^u(y) - G_x^u(y'), y - y' \big\rangle \geq \alpha^u\|y-y'\|^2
$$
and hence $G^{u}_{x}(\cdot)$ is strongly monotone on $\cX$ with constant $\alpha^u = \alpha + O(\|u\|)$ as $u\rightarrow0$.

\paragraph{Step 4 (Compatibility)} Finally, we verify that Assumption~\ref{assmp:lipgrad}\ref{assmp:lipgrad3} holds for the perturbed problem corresponding to $\cD^u$. Indeed, as a consequence of the previous steps, we have $\gamma^u \rightarrow \gamma$ and $\alpha^u \rightarrow \alpha$ as $u\rightarrow0$, so the compatibility inequality $\gamma\beta<\alpha$ corresponding to $\mathcal\cD$ implies $\gamma^u \beta < \alpha^u$ for all sufficiently small $u$.

\subsection{Proof of Lemma~\ref{lem:LAN-new}}\label{sec:proof_LAN-new}
Fix $u\in\R^d$. For each $k\in\NN$, it follows immediately from the definitions \eqref{eq:perturbation-of-the-force}, \eqref{eq:P}, and \eqref{eq:Q} that for all $E_{0},\ldots,E_{k-1}\in\cB(\Z)$, the $Q_{k,u}$-measure of the rectangle $E =E_{0}\times\cdots\times E_{k-1}$ is given by
\begin{align*}
	Q_{k,u}(E) &= \int_{E_{0}}\cdots \int_{E_{k-1}}d\cD_{\tilde{x}_{k-1}}^{\smash{u/\sqrt{k}}}(z_{k-1})\cdots d\cD_{\tilde{x}_0}^{\smash{u/\sqrt{k}}}(z_0) \\
	&= \int_{E_{0}}\cdots \int_{E_{k-1}} \,\prod_{i=0}^{k-1}\tfrac{ 1 +  h(u^{\top} g_{\tilde x_{i}}(z_{i}) /\sqrt{k})}{C_{\tilde x_{i}}^{u/\sqrt{k}}} \, d\cD_{\tilde{x}_{k-1}}(z_{k-1})\cdots d\cD_{\tilde{x}_0}(z_0) \\
	&= \int_{E} \,\prod_{i=0}^{k-1}\tfrac{ 1 +  h(u^{\top} g_{\tilde x_{i}}(z_{i}) /\sqrt{k})}{C_{\tilde x_{i}}^{u/\sqrt{k}}} \, dQ_{k,0}.
\end{align*}
Therefore
\begin{equation*}
  \frac{d Q_{k,u}}{dQ_{k,0}} = \prod_{i=0}^{k-1}\tfrac{ 1 +  h(u^{\top} g_{\tilde x_{i}}\!(z_{i}) /\sqrt{k})}{C_{\tilde x_{i}}^{u/\sqrt{k}}}
\end{equation*}
and hence
  \begin{equation}\label{eq:loglike}
    \log \frac{d Q_{k,u}}{dQ_{k,0}} = \sum_{i=0}^{k-1}{\log}\bigg(\!1 + h\bigg(\frac{u^{\top} g_{\tilde x_{i}}(z_{i})}{\sqrt{k}}\bigg)\!\bigg) - \sum_{i = 0}^{k-1}\log C_{\tilde x_{i}}^{\smash{u/\sqrt{k}}}.
  \end{equation}

By Lemma~\ref{lem: uniform C(x,u)}, we have 
  $
  C_{x}^{u} = 1 + r_{x}(u) 
  $
  with $\sup_{x\in\mathcal{X}}|r_x(u)| = o(\|u\|^2)$ as $u\rightarrow0$,
so the first-order approximation $\log(1 + t) = t + o(t)$ as $t\rightarrow 0$ reveals that the last sum in \eqref{eq:loglike} satisfies
  \begin{equation*}
  	  \sum_{i = 0}^{k-1}\log C_{\tilde x_{i}}^{\smash{u/\sqrt{k}}} = \sum_{i = 0}^{k-1} \Big(r_{\tilde x_{i}}\!\big(u/\sqrt{k}\big) + o\big(r_{\tilde x_{i}}\!\big(u/\sqrt{k}\big)\big)\Big) = k\cdot o(k^{-1}) = o(1) \qquad\text{as~}k\to\infty.
  \end{equation*}
Further, since $h(t) = t$ for all $t$ in a neighborhood of zero and $c:=\sup_{x\in\cX,\,z\in\cZ}|u^{\top}g_x(z)|$ is finite by Lemma~\ref{lem:ultra-bounded}, it follows that for all sufficiently large $k\in\NN$, we have
$$h\bigg(\frac{u^{\top} g_{\tilde x_{i}}(z_{i})}{\sqrt{k}}\bigg) = \frac{u^{\top} g_{\tilde x_{i}}(z_{i})}{\sqrt{k}} \in \bigg[{-}\frac{c}{\sqrt{k}},\frac{c}{\sqrt{k}} \bigg] \qquad \text{for all~}i\geq0.$$
 Thus, the second-order approximation $\log(1+t) = t - \tfrac{1}{2}t^2 + o(t^2)$ as $t\to 0$ reveals that the first sum in \eqref{eq:loglike} satisfies 
	\begin{align*}
  	&\sum_{i=0}^{k-1}{\log}\bigg(\!1 + h\bigg(\frac{u^{\top} g_{\tilde x_{i}}(z_{i})}{\sqrt{k}}\bigg)\!\bigg) \\ 
     &\hspace{1in}= u^{\top}\bigg(\frac{1}{\sqrt{k}}\sum_{i=0}^{k-1} g_{\tilde x_{i}}(z_{i})\bigg) - \frac{1}{2}u^{\top}\bigg(\frac{1}{k}\sum_{i=0}^{k-1}g_{\tilde x_{i}}(z_{i})g_{\tilde x_{i}}(z_{i})^{\top}\bigg)u + k\cdot o(k^{-1}) \\
     &\hspace{1in}= u^{\top}Z_{k} - \frac{1}{2} u^{\top} V_k u + o(1) \qquad\text{as~}k\to\infty,
  \end{align*} 
  where $Z_{k}\colon\cZ^k\rightarrow\R^d$ and $V_{k}\colon\cZ^k\rightarrow\R^{d\times d}$ are given by
  \begin{equation*}
  	Z_{k} = \frac{1}{\sqrt{k}}\sum_{i=0}^{k-1} { g_{\tilde x_{i}}(z_{i})}\qquad\text{and}\qquad V_k = \frac{1}{k}\sum_{i=0}^{k-1}g_{\tilde x_{i}}(z_{i})g_{\tilde x_{i}}(z_{i})^{\top}.
  \end{equation*}
  Therefore
  \begin{equation*}
  	\log \frac{d Q_{k,u}}{dQ_{k,0}} = u^{\top}Z_{k} - \frac{1}{2} u^{\top} V_k u + o(1) \qquad\text{as~}k\to\infty.
  \end{equation*}  
Hence, to complete the verification that $\{Q_{k,u} \!\mid\! u\in \R^d\}$ is locally asymptotically normal at zero with precision $\Sigma_g$, it only remains to demonstrate $Z_{k}\overset{\smash{0}}{\leadsto}\mathsf{N}(0, \Sigma_{g})$ and $V_k = \Sigma_g + o_{Q_{k,0}}(1)$. 
 
The assertion $V_k = \Sigma_g + o_{Q_{k,0}}(1)$ is equivalent to $V_k \xlongrightarrow{p}\Sigma_g$ as $k\to\infty$ on the filtered probability space $(\cZ^{\NN}, \cB(\cZ^{\NN}), \FF, \PP)$, where $\mathbb{F} = (\cF_{k})_{k\geq 0}$ is the filtration given by
\begin{equation*}
	\mathcal{F}_0 := \{\emptyset,\cZ^{\NN}\}\quad\text{and}\quad\mathcal{F}_k := \{E\times \cZ^{\NN} \mid E\in\cB(\Z^k)\} \qquad\text{for all~}k\geq 1
\end{equation*} 
and $\PP :=  \bigotimes_{i=0}^{\infty}\cD_{\tilde{x}_{i}}$. We will show more, namely, that almost sure convergence holds:
  \begin{equation}
    \label{eq:cov-limit}
     V_k \xlongrightarrow{\text{a.s.}}\Sigma_g\qquad\text{as~}k\to\infty.
  \end{equation}
  This is a consequence of the martingale strong law of large numbers (Theorem~\ref{thm:slln}). Indeed, for each $i\geq 0$, set
  \begin{align*}
  X_{i+1} &= g_{\tilde{x}_{i}}(z_{i})g_{\tilde{x}_{i}}(z_{i})^{\top} - {\EE}\big[ g_{\tilde{x}_{i}}(z_{i})g_{\tilde{x}_{i}}(z_{i})^{\top} \,|\, \cF_i \big] \\
  &= g_{\tilde{x}_{i}}(z_{i})g_{\tilde{x}_{i}}(z_{i})^{\top} - {\mathop{\EE}_{z_i\sim\cD_{\tilde x_i}}}\!\big[ g_{\tilde{x}_i}(z_{i}) g_{\tilde{x}_i}(z_{i})^\top \big],
  \end{align*}
  thereby defining a martingale difference sequence $X$ in $\R^{d\times d}$ adapted to $\mathbb{F}$; note that we have $\sup_i{\EE}\|X_i\|_\text{F}^2 < \infty$ by Lemma~\ref{lem:ultra-bounded}, so $\sum_{i = 1}^\infty i^{-2}{\EE}\|X_i\|_\text{F}^2 < \infty$ and hence Theorem~\ref{thm:slln} implies
  \begin{equation}\label{eq:app-of-slln}
  	 V_k - \frac{1}{k} \sum_{i=0}^{k-1} {\mathop{\EE}_{z_i\sim\cD_{\tilde x_i}}}\!\big[ g_{\tilde{x}_i}(z_{i}) g_{\tilde{x}_i}(z_{i})^\top \big]  = \frac{1}{k}\sum_{i=1}^{k}X_i \xlongrightarrow{\textrm{a.s.}} 0 \qquad\text{as~}k\to\infty.
  \end{equation}
  On the other hand, we have $\tilde{x}_i \xlongrightarrow{\textrm{a.s.}} x^\star$ as $i\to\infty$ by Definition~\ref{ass:beautiful-algo}, so Lemma~\ref{lem:covariance-convergence} implies
  $$
  {\mathop{\EE}_{z_i\sim\cD_{\tilde x_i}}}\!\big[ g_{\tilde{x}_i}(z_{i}) g_{\tilde{x}_i}(z_{i})^\top\big] \xlongrightarrow{\textrm{a.s.}} \underset{z \sim \cD_{\xs}}{\mathop{\EE}}\!\big[g_{\xs}(z)g_{\xs}(z)^{\top}\big]=\Sigma_g \qquad\text{as~}i\to\infty
  $$
  and hence the arithmetic mean satisfies 
  \begin{equation}\label{eq:cesaro}
  	  \frac{1}{k} \sum_{i=0}^{k-1} {\mathop{\EE}_{z_i\sim\cD_{\tilde x_i}}}\!\big[ g_{\tilde{x}_i}(z_{i}) g_{\tilde{x}_i}(z_{i})^\top \big] \xlongrightarrow{\text{a.s.}} \Sigma_g \qquad\text{as~}k\to\infty.
  \end{equation}
Combining \eqref{eq:app-of-slln} and \eqref{eq:cesaro} gives \eqref{eq:cov-limit}.

 Finally, we establish $Z_{k}\overset{\smash{0}}{\leadsto}\mathsf{N}(0, \Sigma_{g})$ by applying the martingale central limit theorem (Theorem~\ref{thm:clt}). Set $M_0 = 0$ and $M_k = \sum_{i=0}^{k-1}g_{\tilde{x}_i}(z_{i})$ for each $k\geq1$; then $M$ is a square-integrable martingale in $\R^d$ adapted to the filtration $\mathbb{F}$. Indeed, the increments of $M$ are clearly uniformly bounded (Lemma~\ref{lem:ultra-bounded}), $M_k$ is $\cF_k$-measurable, and
  	$$
  	{\EE}\big[M_{k+1} \,|\, \cF_{k} \big] = M_{k} + {\mathop{\EE}_{z_{k} \sim \cD_{\tilde{x}_{k}}}}\!\big[ g_{\tilde{x}_k}(z_{k}) \big] = M_{k}
  	$$
  by the unbiasedness condition of Definition~\ref{def:G_n}. The predictable quadratic variation of $M$ is given by
  $$
  \langle M \rangle_k = \sum_{i=1}^{k}{\EE}\big[(M_{i}-M_{i-1})(M_{i}-M_{i-1})^{\top} \,|\, \cF_{i-1} \big] = \sum_{i=0}^{k-1} {\mathop{\EE}_{z_i\sim\cD_{\tilde x_i}}}\!\big[ g_{\tilde{x}_i}(z_{i}) g_{\tilde{x}_i}(z_{i})^\top \big].
  $$
 Thus, by \eqref{eq:cesaro}, we have
  $$
  k^{-1} \langle M \rangle_k = \frac{1}{k} \sum_{i=0}^{k-1} {\mathop{\EE}_{z_i\sim\cD_{\tilde x_i}}}\!\big[ g_{\tilde{x}_i}(z_{i}) g_{\tilde{x}_i}(z_{i})^\top \big] \xlongrightarrow{\textrm{a.s.}} \Sigma_g \qquad\text{as~}k\to\infty.
  $$
  The assumptions of Theorem~\ref{thm:clt} are therefore fulfilled with $a_k = k$ (note that Lindeberg's condition holds trivially by the uniform boundedness of the increments of $M$). Hence
  $$
Z_k = k^{-1/2}M_k \overset{\smash{0}}{\leadsto} \mathsf{N}(0, \Sigma_{g}).
  $$
This completes the proof.

\subsection{Proof of Lemma~\ref{lem:smooth-kappa}}\label{sec:proof_smooth-kappa}

Let $F\colon\cX\times\R^d\rightarrow\R^d$ be the map given by
$$F(x,u) =  {\mathop{\EE}_{z \sim \cD_{x}^{u}}}\big[G(x,z)\big] = \frac{1}{C_{x}^{u}}\,  {\mathop{\EE}_{z \sim \cD_{x}}}\! \big[\big(1+ h\big(u^{\top} g_{x}(z)\big)\big) G(x, z) \big],$$
where we recall
$C_{x}^{u} = 1+ {\mathop{\EE}_{z \sim \cD_{x}}} [h(u^\top g_{x}(z))]$.
Lemma~\ref{lem:oh-the-smoothness} directly implies that $F$ is $C^1$-smooth. Consider now the family of smooth nonlinear equations
\begin{equation}\label{eq:equi-fam}
	F(x,u)=0
\end{equation}
parameterized by $u\in\mathcal\R^d$.
Note $F(x^\star, 0)=G_{x^\star}(x^\star)=0$ since $\xs\in\interior\cX$. More generally, the equality \eqref{eq:equi-fam} with $(x,u) \in (\rm{int}\,\cX)\times\cU$ holds precisely when $x$ is equal to $x_u^\star$.
We will apply the implicit function theorem to show that \eqref{eq:equi-fam} determines $x_u^{\star}$ as a smooth function of $u$ on a neighborhood of zero. To this end, observe that Lemma~\ref{lem:oh-the-smoothness} reveals 
$$
\nabla_x F(x^\star, 0 ) = \nabla_x \Big( \mathop{\EE}_{z\sim \D_x}\!G(x,z)\Big)\Big|_{x=x^\star} =  W,
$$
which is invertible by Lemma~\ref{lem:posjac}. Consequently, the implicit function theorem yields open neighborhoods $U\subset \cU$ of $0$ and $V\subset\interior\cX$ of $x^\star$ and a $C^1$-smooth map $U \to V$ given by $u\mapsto x_u^\star$ with Jacobian $-W^{-1}\nabla_u F(x^\star, 0)$ at $u=0$. This yields the first-order approximation
\begin{equation}\label{eqn:taylor_opt}
x^\star_u = x^\star - W^{-1}\nabla_u F(x^\star, 0)u + o(\|u\|) \qquad \textrm{as } u\to 0.
\end{equation}
By Lemma~\ref{lem:oh-the-smoothness}, we have
$$
\nabla_u F(x, 0) = {\mathop{\EE}_{z\sim \D_{x}}}\!\big[G(x,z)g_{x}(z)^{\top}\big]
$$
for all $x\in\cX$. In particular, $\nabla_u F(x^{\star}, 0) = \Sigma_{g,G}^{\top}$.
Thus, \eqref{eqn:taylor_opt} asserts
\begin{align*}
x^\star_u &= x^\star - W^{-1}\Sigma_{g,G}^{\top}u + o(\|u\|)\qquad \textrm{as }u\to 0.
\end{align*}
Consequently, for any fixed $u\in\R^d$, we have
$$\sqrt{k}\big(x^\star_{\smash{u/\sqrt{k}}} - x^\star\big) = - W^{-1}\Sigma_{g,G}^{\top}u + \sqrt{k}\cdot o\bigg(\frac{1}{\sqrt{k}}\bigg)  \rightarrow - W^{-1}\Sigma_{g,G}^{\top}u \qquad \text{as~} k\rightarrow\infty.$$
The proof is complete.

\subsection{Proof of Lemma~\ref{lem:oh-the-smoothness}} \label{proof:oh-the-smoothness}
 Recall first that the quantities 
 $$M'_g := \sup_{x\in\cX,\,z\in\cZ} \|\nabla_x g_x(z)\|_\textup{op} \qquad\text{and}\qquad M_g := \sup_{x\in\cX,\,z\in\cZ} \|g_x(z)\|$$
 are finite by Lemma~\ref{lem:ultra-bounded}. The same argument shows that $M'_{T}$ and $M_T$ are finite.

Next, we turn to establishing that the map $H\colon \cX \times \cW \rightarrow \R^n$ given by
$$
H(x, u) = \frac{1}{C_{x}^{u}}\, {\mathop{\EE}_{z \sim \cD_{x}}}\! \big[ T(x, z) \big(1+ h\big(u^{\top} g_{x}(z)\big)\big)\big]
$$
is smooth with Lipschitz Jacobian on the compact set $\mathcal{K}:= \cX \times \cW$. By Lemma~\ref{lem:lip-quo}, it is enough to show that  $(x, u) \mapsto C_{x}^{u}$ and
$$
\hat H (x, u) := {\mathop{\EE}_{z \sim \cD_{x}}}\!\big[ T(x, z) \big(1+ h\big(u^{\top} g_{x}(z)\big)\big)\big]
$$
are smooth with Lipschitz Jacobians on $\mathcal{K}$; in turn, it suffices to establish this fact for $\hat H$ since we can then take $T\equiv 1$ to derive the result for $C_{x}^{u}$.

We reason this via the chain rule. Namely, consider the map $\bar H\colon \cX \times \cX \times \cW \rightarrow \RR^n$ given by
$$
\bar H(x,y, u) = {\mathop{\EE}_{z \sim \cD_{x}}}\!\big[ T(y, z) \big(1+ h\big(u^{\top} g_{y}(z)\big)\big)\big].
$$
Clearly $\hat H = \bar H \circ J$ with $J(x, u) := (x, x, u)$ and therefore the chain rule implies $\nabla \hat H(x, u) = \nabla \bar H(x, x, u) \nabla J(x, u)$ provided $\bar{H}$ is smooth. Thus, it suffices to show that $\bar H$ is smooth with Lipschitz Jacobian. To this end, we demonstrate that the three partial derivatives of $\bar H$ are all Lipschitz with constants depending on $T$ only through $ \beta_T, \bar{\Lambda}_T, \beta'_T$, $M_T$, and $M'_T$.

We begin with the partial derivative of $\bar H$ with respect to $x$. Consider the function $\phi\colon \cK \times \cZ \rightarrow \R^n$ given by
$$
\phi(y,u,z) = T(y, z) \big(1 + h\big(u^{\top} g_{y}(z)\big)\big).
$$
Let us verify that $\phi$ is a test function to which item \ref{ass:extra-smooth} of Assumption~\ref{ass:extra} applies. Clearly $\phi$ is measurable and bounded with $\sup \|\phi\| \leq 2M_T$. Further, for each $z\in\cZ$, it follows readily that the section $\phi(\cdot, z)$ is Lipschitz on $\cK$ with constant
$$
L_{\phi} := 2M'_{T} + M_TL_h\big(\!\diam(\cW)M'_{g} + M_g\big),
$$
where $L_{h}:=\sup|h'|$.
Thus, item \ref{ass:extra-smooth} of Assumption~\ref{ass:extra} implies that the map
    $$
    x \mapsto \mathop{\EE}_{z \sim \D_{x}}{\phi(y,u,z)} = \bar{H}(x,y,u)
    $$
    is smooth on $\cX$ for each $(y,u)\in\mathcal{K}$, and that the map
    $$
    (x, y, u) \mapsto \nabla_{x}\bar{H}(x,y,u)
    $$
    is Lipschitz on $\mathcal{X}\times\mathcal{X}\times\mathcal{W}$ with constant $\vartheta( L_{\phi} + 2M_{T} )$, which depends on $T$ only through $M_T$ and $M'_T$.

Next, we consider the partial derivative of $\bar H$ with respect to $y$. Given $(x, u) \in \cX \times \cW$, the dominated convergence theorem ensures that $\bar H(x, y, u)$ is smooth in $y$ with
\begin{equation}\label{eq:y-deriv-exp}
\nabla_{y}\bar{H}(x,y,u) = {\mathop{\EE}_{z \sim \D_{x}}} \nabla_{y}\phi(y, u, z)
\end{equation}
provided $\|\nabla_{y}\phi(y, u, z)\|_\text{op}$ is dominated by a $\cD_{x}$-integrable random variable independent of $y$. Using the product rule, we have
\begin{equation}\label{eq:y-deriv}
	\nabla_{y}\phi(y, u, z) = \big(\nabla_{y} T(y, z)\big)\big(1 + h\big(u^{\top} g_{y}(z)\big)\big) + h'\big(u^{\top} g_{y}(z)\big)\big(T(y,z)u^{\top}\big)\nabla_{y}g_{y}(z)
	\end{equation}
and hence
\begin{align*}
  \|\nabla_{y}\phi(y, u, z)\|_{\text{op}} &\leq 2\|\nabla_y T(y, z) \|_{\text{op}} + (\sup |h'|) \|T(y, z)\| \|u\| \|\nabla_{y} g_{y}(z)\|_{\text{op}} \\
                              & \leq 2M'_{T} + \diam(\cW)L_{h}M_{T}M'_{g},
\end{align*}
so $\nabla_{y}\phi$ is in fact uniformly bounded. Therefore $\bar H(x, y, u)$ is smooth in $y$ and \eqref{eq:y-deriv-exp} holds. Moreover, it follows from \eqref{eq:y-deriv} that the map
$$
(x, y, u) \mapsto \nabla_{y}\bar{H}(x,y,u)
$$
is Lipschitz on $\cX\times\cX\times\cW$; we will verify this by computing Lipschitz constants separately in $x$, $y$, and $u$. To begin, note that it follows from \eqref{eq:y-deriv} that $z\mapsto \nabla_{y}\phi(y, u, z)$ is Lipschitz on $\cZ$ with constant
$$
a:= 2\beta'_{T} + \diam(\cW)M'_{T}L_{h}\beta_{g} + \diam(\cW)L_h(\beta_{T}M'_{g} + M_{T}\beta'_{g}) + \diam(\cW)^{2}M_{T}M'_{g}L_{h'}\beta_{g},
$$
where $L_{h'}:=\sup|h''|$. Hence \eqref{eq:y-deriv-exp} and Assumption~\ref{assmp:lipdist} imply that $x\mapsto\nabla_{y}\bar{H}(x,y,u)$ is Lipschitz on $\cX$ with constant $\gamma a$, which depends on $T$ only through $\beta_{T}, \beta'_{T}, M_{T}$, and $M'_{T}$. Likewise, it follows from \eqref{eq:y-deriv} that $y\mapsto \nabla_{y}\phi(y, u, z)$ is Lipschitz on $\cX$ with constant
$$
2 \Lambda_{T}(z) + \diam(\cW)M'_{T}L_{h}M'_{g} + \diam(\cW)L_{h}\big(M_{T}\Lambda_{g}(z) + M'_{T}M'_{g}\big) + \diam(\cW)^{2}M_{T}L_{h'}(M'_{T})^2.
$$
Hence \eqref{eq:y-deriv-exp} implies that $y \mapsto \nabla_{y}\bar{H}(x,y,u)$  is Lipschitz on $\cX$ with constant
$$
2\bar{\Lambda}_{T} + \diam(\cW)L_{h}\big(M_{T}\bar{\Lambda}_{g} + 2M'_{T}M'_{g}\big) + \diam(\cW)^2M_{T}L_{h'}(M'_{T})^2.
$$
Similarly, it follows from \eqref{eq:y-deriv} that $u\mapsto \nabla_{y}\phi(y, u, z)$ is Lipschitz on $\mathcal{W}$ with constant
$$
M'_{T}M_{g} L_{h} + M_{T}M'_{g}\big( L_h + \diam(\cW)M_{g}L_{h'} \big),
$$
so \eqref{eq:y-deriv-exp} implies that  $u\mapsto \nabla_{y}\bar{H}(x, y, u)$ is Lipschitz on $\mathcal{W}$ with  the same constant.
We conclude therefore that the map $(x, y, u) \mapsto \nabla_{y}\bar{H}(x,y,u)$ is Lipschitz on $\cX\times\cX\times\cW$ with constant depending on $T$ only through $\beta_T, \bar{\Lambda}_T, \beta'_T$, $M_T$, and $M'_{T}$.

Finally, we consider the partial derivative of $\bar H$ with respect to $u$. Given $(x, y) \in \cX \times \cX$, the dominated convergence theorem ensures that $\bar H(x,y,u)$ is smooth in $u$ with
\begin{equation}\label{eq:u-deriv-exp}
\nabla_{u} \bar H(x, y, u) = {\mathop{\EE}_{z \sim \D_{x}}} \nabla_{u}\phi(y, u, z)
\end{equation}
provided $\|\nabla_{u}\phi(y, u, z)\|_\text{op}$ is dominated by a $\cD_{x}$-integrable random variable independent of $u$. In this case, we have
\begin{equation}\label{eq:u-deriv}
	\nabla_{u}\phi(y, u, z) = h'\big(u^{\top} g_{y}(z)\big)T(y,z)g_{y}(z)^{\top}
	\end{equation}
and hence
\begin{align*}
  \|\nabla_{u}\phi(y, u, z)\|_{\text{op}} &\leq (\sup |h'|) \|T(y, z)\| \|g_{y}(z)\| \leq L_{h}M_{T}M_{g}.
\end{align*}
Therefore $\bar H(x,y,u)$ is smooth in $u$ and \eqref{eq:u-deriv-exp} holds. Moreover, it follows from \eqref{eq:u-deriv} that the map
$$
(x, y, u) \mapsto \nabla_{u}\bar{H}(x,y,u)
$$
is Lipschitz on $\cX\times\cX\times\cW$; as before, we will verify this by computing Lipschitz constants separately in $x$, $y$, and $u$. First, note that it follows from \eqref{eq:u-deriv} that $z\mapsto \nabla_{u}\phi(y, u, z)$ is Lipschitz on $\cZ$ with constant
$$
b:= L_{h}(\beta_{T}M_{g} + M_{T}\beta_{g}) + \diam(\cW)M_{T}M_{g}L_{h'}\beta_{g}.
$$
Hence \eqref{eq:u-deriv-exp} and Assumption~\ref{assmp:lipdist} imply that $x\mapsto\nabla_{u}\bar{H}(x,y,u)$ is Lipschitz on $\cX$ with constant $\gamma b$, which depends on $T$ only through $\beta_{T}$ and $M_{T}$. Likewise, it follows from \eqref{eq:u-deriv} that $y\mapsto \nabla_{u}\phi(y, u, z)$ is Lipschitz on $\cX$ with constant
$$
L_{h}(M'_{T}M_{g} + M_{T}M'_{g}) + \diam(\cW)M_{T}M_{g}L_{h'}M'_{g},
$$
hence so is $y \mapsto \nabla_{u}\bar{H}(x,y,u)$ by \eqref{eq:u-deriv-exp}.
Similarly, it follows from \eqref{eq:u-deriv} that $u\mapsto \nabla_{u}\phi(y, u, z)$ is $L_{h'}M_{T}M_{g}^2$-Lipschitz on $\mathcal{W}$, hence so is $u\mapsto \nabla_{u}\bar{H}(x, y, u)$ by \eqref{eq:u-deriv-exp}. We conclude therefore that the map $(x, y, u) \mapsto \nabla_{u}\bar{H}(x,y,u)$ is Lipschitz on $\cX\times\cX\times\cW$ with constant depending on $T$ only through $\beta_T, M_{T}$, and $M'_{T}$.

The preceding reveals that $\bar H$ and hence $\hat H = \bar H \circ J$ are smooth, with Lipschitz Jacobians with constants depending on $T$ only through $\beta_T, \bar{\Lambda}_T, \beta'_T$, $M_T$, and $M'_{T}$. Taking $T\equiv 1$, we conclude that $(x, u) \mapsto C_{x}^{u}$ is smooth, with Lipschitz Jacobian with constant independent of $T$. Upon observing in the same way as above that $\bar H$ and hence $\hat H$ are Lipschitz with constants depending on $T$ only through $\beta_T, M_{T}$, and $M'_{T}$, it follows from Lemma~\ref{lem:lip-quo} and its proof that $H$ is smooth, with Lipschitz Jacobian with constant depending on $T$ only through $\beta_T, \bar{\Lambda}_T, \beta'_T$, $M_T$, and $M'_{T}$. 

Finally, given any $x\in\cX$, the equalities  
    $$
    \nabla_x H(x, 0 ) = \nabla_x \Big( \mathop{\EE}_{z\sim \D_x}\!T(x,z)\Big)\qquad\text{and}\qquad \nabla_u H(x, 0) = {\mathop{\EE}_{z\sim \D_{x}}}\!\big[T(x,z)g_{x}(z)^{\top}\big]
    $$
follow from straightforward computations (using the quotient rule, dominated convergence theorem, and chain and product rules). This completes the proof.

\section{Underlying Probability Space}
In this appendix, we formally construct the probability space where decision-dependent dynamics take place. 
The following lemma shows that Assumption~\ref{assmp:lipdist} implies $\{\mathcal{D}_{x}\}_{x\in\cX}$ is a Markov kernel from $\cX$ to $\cZ$, i.e., for each $E\in\cB(\cZ)$, the function $\cX\rightarrow[0,1]$ given by $x\mapsto\cD_{x}(E)$ is measurable.

\begin{lemma}[Markov kernel]\label{lem:markov}
Let $\cZ$ be a nonempty Polish metric space. Then, for any bounded measurable function $\varphi\colon\cZ\rightarrow \R$, the function $P_1(\cZ)\rightarrow\R$ given by $\mu\mapsto\int\varphi\,d\mu$ is measurable. In particular, for any measurable space $\cX$ and any measurable map $x\mapsto\cD_x$ from $\cX$ to $P_1(\cZ)$, it follows that $\{\mathcal{D}_{x}\}_{x\in\cX}$ is a Markov kernel from $\cX$ to $\cZ$.
\end{lemma}
\begin{proof}
	Let $\mathcal{M}_b$ denote the set of all bounded measurable functions $\cZ\rightarrow\R$. For each $\varphi\in\mathcal{M}_b$, let $I_{\varphi}\colon P_1(\cZ)\rightarrow\R$ be the function given by $I_{\varphi}(\mu) = \int \varphi \, d\mu$. Now consider the set
	\begin{equation*}
		\mathcal{C} = \big\{\varphi\in\mathcal{M}_b \mid I_{\varphi}\text{~is measurable}\big\}.
	\end{equation*}
	To demonstrate $\mathcal{C} = \mathcal{M}_b$, it suffices by the functional monotone class theorem \citep[e.g., see][Exercise 11.7]{Kechris1995} to show that $\mathcal{C}$ possesses the following two properties:
	\begin{enumerate}[label=(\roman*)]
    \item\label{cond:continuous_func} Every bounded continuous function $\cZ\rightarrow\R$ is contained in $\mathcal{C}$.
    \item\label{cond:pointwise_closure} If $(\varphi_n)$ is a uniformly bounded sequence in $\mathcal{C}$ with pointwise limit $\varphi\colon\cZ\rightarrow\R$ (i.e., $\sup_{n,z} |\varphi_n(z)| < \infty$ and $\lim_{n\to\infty}\varphi_n(z) = \varphi(z)$ for all $z\in\cZ$), then $\varphi\in\mathcal{C}$.
  \end{enumerate}
  To this end, note first that \ref{cond:continuous_func} holds because $W_1$-convergence in $P_1(\cZ)$ implies weak convergence \citep[e.g., see][Proposition 7.1.5]{Ambrosio2008}; indeed, if $\varphi\colon\cZ\rightarrow\R$ is bounded and continuous, then for any sequence $(\mu_n)$ in $P_1(\cZ)$ such that $W_1(\mu_n,\mu)\rightarrow 0$ for some $\mu\in P_1(\cZ)$, we have $I_{\varphi}(\mu_n)\rightarrow I_{\varphi}(\mu)$, so $I_\varphi$ is continuous and hence measurable. On the other hand, \ref{cond:pointwise_closure} follows from the dominated convergence theorem: if $(\varphi_n)$ is a uniformly bounded sequence in $\mathcal{C}$ with pointwise limit $\varphi\colon\cZ\rightarrow\R$, then $\varphi\in\mathcal{M}_b$ and
  \begin{equation*}
  	I_{\varphi}(\mu) = \int \lim_{n\to\infty}\varphi_n(z)\,d\mu(z) = \lim_{n\to\infty} \int \varphi_n(z)\,d\mu(z) = \lim_{n\to\infty} I_{\varphi_n}(\mu)
  \end{equation*}
	for all $\mu\in P_1(\cZ)$, so $I_\varphi$ is measurable as the pointwise limit of the sequence of measurable functions $(I_{\varphi_n})$. Hence $\mathcal{C} = \mathcal{M}_b$, i.e., $I_\varphi$ is measurable for every bounded measurable function $\varphi\colon\cZ\rightarrow \R$; in particular, the last claim of the lemma follows by taking $\varphi$ to be the indicator function $\mathbf{1}_{E}$ of any measurable set $E\in\cB(\cX)$.
\end{proof}

We will require the existence of the probability measure $\bigotimes_{i=0}^{\infty}\cD_{x_{i}}$ on the countable product space $\Z^{\NN}$ with marginals given by recursive application of the Markov kernel $\{\mathcal{D}_{x}\}_{x\in\cX}$ from $\cX$ to $\cZ$ along a sequence of measurable maps $x_t\colon\cZ^t\rightarrow\cX$ (corresponding to iterates of a decision-dependent algorithm). This is provided by the following theorem, which may be viewed as a special case of either the Kolmogorov extension theorem \citep[see][Appendix~D]{Bass2011} or the Ionescu–Tulcea extension theorem \citep[see][Theroem~14.35]{Klenke2020}.

\begin{theorem}[Ionescu-Tulcea]\label{thm:kolmogorov}
	Let $\cX$ be a measurable space, $\cZ$ be a nonempty Polish metric space, $\{\mathcal{D}_{x}\}_{x\in\cX}$ be a Markov kernel from $\cX$ to $\cZ$, and $x_t\colon\cZ^t\rightarrow\cX$ be a sequence of measurable maps (with $x_0\in\cX$). For each $t\geq1$, let $\PP_t = \bigotimes_{i=0}^{t-1}\cD_{x_{i}}$ be the probability measure on $\cZ^t$ defined recursively by setting $\PP_1 = \cD_{x_0}$ and
\begin{equation*}
	\PP_{t+1}(A \times E) = \int_{A} \cD_{x_t}(E) \,d\PP_{t} \qquad\text{for all~}A\in\cB(\Z^t)\text{~and~}E\in\cB(\cZ),
\end{equation*}
and let $\pi_{t}\colon\cZ^{\NN}\rightarrow\Z^t$ denote the projection from the countable product space $\Z^{\NN}$ onto the first $t$ coordinates. Then there exists a unique probability measure $\PP =  \bigotimes_{i=0}^{\infty}\cD_{x_{i}}$ on $\Z^{\NN}$  satisfying $(\pi_t)_{\#}\PP = \PP_t$ for all $t\geq1$, that is,
\begin{equation*}
	\PP(A\times \cZ^{\NN}) = \PP_t(A) \qquad\text{for all~}A\in\cB(\Z^t)\text{~and~}t\geq 1.
\end{equation*}
Thus, for every $t\geq 0$ and every measurable function $\varphi\colon\Z^{t+1}\rightarrow\overline{\R}$ that is nonnegative or $\PP_{t+1}$-integrable, we have
\begin{equation*}
	 \EE[\varphi\circ\pi_{t+1}] = \int_{\Z^{t+1}} \varphi\,d\PP_{t+1} =\int_{\Z}\cdots \int_{\Z} \varphi(z_0,\ldots,z_{t}) \,d\cD_{x_{t}}(z_{t})\cdots d\cD_{x_0}(z_0)
\end{equation*}
and
\begin{equation*}
\EE[\varphi\circ\pi_{t+1}\,|\,\cF_{t}] = \int_{\Z} \varphi(z_0,\ldots,z_{t}) \,d\cD_{x_{t}}(z_{t}) = \underset{z_t\sim\D_{x_t}}{\Expect}[\varphi(z_0,\ldots,z_{t})],
\end{equation*}
where $\cF_t = \{A\times \cZ^{\NN} \mid A\in\cB(\Z^t)\}$ denotes the $\sigma$-algebra generated by $\pi_t$ (with $\cF_0 = \{\emptyset, \cZ^{\NN}\}$).
\end{theorem}

\section{Supplementary Results}

In this appendix, we record some supplementary results fundamental to our analysis. First, we record suitably general versions of the Strong Law of Large Numbers \citep[see][Exercise 5.3.35]{dembo_probthry} and the Central Limit Theorem \citep[see][Corollary 2.1.10]{duflo97} for square-integrable martingales.

\begin{theorem}[Martingale Strong Law of Large Numbers]\label{thm:slln}
Let $X$ be a square-integrable martingale difference sequence in $\R^n$ adapted to a filtration $(\cF_k)$ and $(a_k)$ be a sequence of positive constants such that $a_{k} \uparrow \infty$ as $k \to \infty$. Then on the event $\big\{\sum_{i = 1}^\infty a_{i}^{\smash{-2}}\EE[\|X_i\|^2\,|\,\cF_{i-1}] < \infty\big\}$, we have $a_{k}^{\smash{-1}}\sum_{i=1}^{k}X_i\to 0$ almost surely as $k\to\infty$. In particular, if $\sum_{i = 1}^\infty a_{i}^{\smash{-2}}\EE\!\|X_i\|^2 < \infty$, then $a_{k}^{\smash{-1}}\sum_{i=1}^{k}X_i\to 0$ almost surely as $k\to\infty$.
\end{theorem}

\begin{theorem}[Martingale Central Limit Theorem]\label{thm:clt}
  Let $M$ be a square-integrable martingale in $\R^n$ adapted to a filtration $(\cF_k)$, and let $\langle M \rangle$ denote the predictable quadratic variation of $M\!$:
  $$
  \langle M \rangle_k = \sum_{i=1}^{k}{\EE}\big[(M_{i}-M_{i-1})(M_{i}-M_{i-1})^{\top} \,|\, \cF_{i-1} \big] \qquad\text{for all~} k\geq 1.
  $$
  Let $(a_k)$ be a sequence of positive constants such that $a_{k} \uparrow \infty$ as $k \to \infty$. Suppose that the following two assumptions hold.
  \begin{enumerate}[label=(\roman*)]
  	\item (\textbf{Asymptotic covariance}) There is a deterministic positive semidefinite matrix $\Sigma$ satisfying $$a_k^{-1}\langle M \rangle_k \xlongrightarrow{p} \Sigma \qquad\text{as~}k\to\infty.$$
  	\item (\textbf{Lindeberg's condition}) For all $\varepsilon>0$,
  		$$
  		a_{k}^{\smash{-1}}\sum_{i=1}^{k}{\EE}\big[ \|M_i - M_{i-1}\|^2\mathbf{1}_{\smash{\{\|M_i - M_{i-1}\|\geq \varepsilon a_{k}^{\smash{1/2}}\}}} \,|\, \cF_{i-1}\big]\xlongrightarrow{p} 0 \qquad\text{as~}k\to\infty.
  		$$
  \end{enumerate}
  Then
  $$
  a_{k}^{\smash{-1}}M_{k} \xlongrightarrow{\textup{a.s.}} 0 \quad \text{and} \quad a_{k}^{\smash{-1/2}}M_{k} \leadsto \mathsf{N}(0, \Sigma) \qquad\text{as~}k\to\infty.
  $$

\end{theorem}

The following lemma is used multiple times in our arguments to compute limits of covariance matrices.

\begin{lemma}[Asymptotic covariance]\label{lem:covariance-convergence}
  Let $x_{t}\in\X$ be a sequence in some set $\mathcal{X}\subset\R^d$ converging to some point $\xs\in \mathcal{X}$, and let $\mu_t\in P_1(\mathcal{Z})$ be a sequence of probability measures on a nonempty Polish space $\mathcal{Z}$ converging to some measure $\mu^{\star}\in P_1(\mathcal{Z})$ in the Wasserstein-1 metric. Suppose that $g \colon \cX \times \cZ \rightarrow \RR^{n}$ is a measurable map satisfying the following two conditions.
  \begin{enumerate}[label=(\roman*)]
    \item\label{covlem:cond1} (\textbf{Asymptotic uniform integrability}) For every $\delta>0$, there exists a constant $N_{\delta}\geq0$ such that
    \begin{align*}
    	\limsup_{t\to \infty}\mathop{\EE}_{z \sim \mu_t} \!\big[\|g(\xs, z)\|^2 \mathbf{1}_{\smash{\{\|g(\xs, z)\| \geq N_\delta\}}}\big] &\leq \delta, \\
    	\mathop{\EE}_{z \sim \mu^{\star}} \!\big[\|g(\xs, z)\|^2 \mathbf{1}_{\smash{\{\|g(\xs, z)\| \geq N_\delta\}}}\big] &\leq \delta.
    \end{align*}

    \item\label{covlem:cond2}  (\textbf{Lipschitz continuity}) There exist a neighborhood $\mathcal{V}$ of $x^{\star}$, a measurable function $L\colon\cZ\rightarrow[0,\infty)$, and constants $ \beta, \bar{L}\geq0$ such that for every $z\in\Z$, the section $g(\cdot,z)$ is $L(z)$-Lipschitz on $\mathcal{V}$ with $\limsup_{t\to \infty}{\EE}_{z\sim \mu_t}[L(z)^2] \leq \bar{L}^2$, and the section $g(x^\star,\cdot)$ is $\beta$-Lipschitz on $\cZ$.

\end{enumerate}
  Then $$\lim_{t\to\infty}\,\mathop{\EE}_{z\sim \mu_t} \!\big[ g(x_{t},z)  g(x_{t},z)^{\top} \big] = \mathop{\EE}_{z\sim \mu^{\star}}  \!\big[g(\xs,z) g(\xs,z)^{\top}\big].$$
\end{lemma}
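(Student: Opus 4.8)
The plan is to introduce the intermediate quantity $\EE_{\mu_t}\big[g(\xs,z)g(\xs,z)^\top\big]$ and control the two resulting differences separately: the first isolates the shift of the iterate $x_t\to\xs$, and the second the shift of the measure $\mu_t\to\mu^\star$. Throughout I work with $t$ large enough that $x_t\in\mathcal{V}$ and $\|x_t-\xs\|\le 1$, which is possible since $x_t\to\xs$. Combining the two estimates below then yields the claim.

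\emph{Iterate-shift term.} Using the identity $aa^\top-bb^\top=(a-b)b^\top+a(a-b)^\top$ with $a=g(x_t,z)$ and $b=g(\xs,z)$, together with the Lipschitz bound $\|g(x_t,z)-g(\xs,z)\|\le L(z)\|x_t-\xs\|$ from condition~\ref{covlem:cond2} (and $\|g(x_t,z)\|\le\|g(\xs,z)\|+L(z)$ since $\|x_t-\xs\|\le 1$), I get
\[
\big\|g(x_t,z)g(x_t,z)^\top-g(\xs,z)g(\xs,z)^\top\big\|\;\le\;\|x_t-\xs\|\,L(z)\big(2\|g(\xs,z)\|+L(z)\big).
\]
Taking expectations over $\mu_t$ and applying Cauchy--Schwarz, the right-hand side is at most $\|x_t-\xs\|\big(2\sqrt{\EE_{\mu_t}[L(z)^2]}\,\sqrt{\EE_{\mu_t}[\|g(\xs,z)\|^2]}+\EE_{\mu_t}[L(z)^2]\big)$. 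Condition~\ref{covlem:cond2} gives $\limsup_t\EE_{\mu_t}[L(z)^2]\le\bar L^2$, and condition~\ref{covlem:cond1} applied with any fixed $\delta>0$ gives $\EE_{\mu_t}[\|g(\xs,z)\|^2]\le N_\delta+\delta$ for all large $t$; hence both factors are bounded and this term is $O(\|x_t-\xs\|)\to 0$.

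\emph{Measure-shift term.} Here I must show $\EE_{\mu_t}[\Phi]\to\EE_{\mu^\star}[\Phi]$ for the fixed map $\Phi(z):=g(\xs,z)g(\xs,z)^\top$. Since $z\mapsto g(\xs,z)$ is only $\beta$-Lipschitz, $\Phi$ is locally Lipschitz but grows quadratically in $\|g(\xs,z)\|$, so the $W_1$ bound cannot be applied to $\Phi$ directly; this is the crux of the argument, and it is handled by truncation. For $R>0$, let $\tilde g_R(z)$ be the Euclidean projection of $g(\xs,z)$ onto the closed ball of radius $\sqrt R$; as projection onto a convex set is $1$-Lipschitz, $\tilde g_R$ is $\beta$-Lipschitz and bounded by $\sqrt R$, so $\Phi_R:=\tilde g_R\tilde g_R^\top$ is bounded and Lipschitz, say with constant $c_R$. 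The definition of $W_1$ (applied coordinatewise) then gives $\big\|\EE_{\mu_t}[\Phi_R]-\EE_{\mu^\star}[\Phi_R]\big\|\le c_R\,W_1(\mu_t,\mu^\star)\to 0$. Moreover $\Phi-\Phi_R$ is supported on $\{\|g(\xs,z)\|^2>R\}$, where $\|\Phi-\Phi_R\|\le\|g(\xs,z)\|^2+R\le 2\|g(\xs,z)\|^2$, so choosing $R=N_\delta$ from condition~\ref{covlem:cond1} yields both $\limsup_t\EE_{\mu_t}\|\Phi-\Phi_R\|\le 2\delta$ and $\EE_{\mu^\star}\|\Phi-\Phi_R\|\le 2\delta$. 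A three-term triangle inequality then gives $\limsup_t\big\|\EE_{\mu_t}[\Phi]-\EE_{\mu^\star}[\Phi]\big\|\le 4\delta$, and letting $\delta\downarrow 0$ closes out this term and completes the proof. The main obstacle is precisely this truncation step: reconciling $W_1$-convergence (which only controls integrals of Lipschitz functions) with the quadratic growth of $\Phi$, which is exactly why the uniform integrability hypothesis~\ref{covlem:cond1} is imposed.
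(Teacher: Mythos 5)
Your proof is correct and follows essentially the same route as the paper's: split off the iterate shift via the rank-one identity plus Cauchy--Schwarz and the uniform second-moment bound, then handle the measure shift by truncating to a bounded Lipschitz function (controlled by $W_1$) with the tail controlled by condition~\ref{covlem:cond1}. The only cosmetic difference is your truncation device (radial projection of $g(\xs,\cdot)$ onto a ball) versus the paper's continuous cutoff $\varphi_{N_\delta}(\|g(\xs,z)\|)\,g(\xs,z)g(\xs,z)^\top$, which plays an identical role.
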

\begin{proof}
  For notational convenience, set $g_x(z) = g(x, z)$ and
  $$
  \Sigma = {\mathop{\EE}_{z\sim \mu^{\star}}} \big[ g_{\xs}(z) g_{\xs}(z)^{\top} \big].
  $$
  For any $\delta>0$, the decomposition
  \begin{align*}\begin{split}
      \underset{z\sim\mu_t}{\EE} \big[\|g_{\xs}(z)\|^{2}\big] & =
      \underset{z\sim\mu_t}{\Expect}\big[\|g_{\xs}(z)\|^{2}\mathbf{1}_{\smash\{\|g_{\xs}(z)\| < N_{\delta}\}}\big] + \underset{z\sim\mu_t}{\Expect}\big[\|g_{\xs}(z)\|^{2}\mathbf{1}_{\smash{\{\|g_{\xs}(z)\|\geq N_{\delta}\}}}\big]
    \end{split}
  \end{align*}
  holds for all $t$, so condition~\ref{covlem:cond1} implies
  \begin{equation}\label{eq:unifM1}
  \limsup_{t\to \infty}\underset{z\sim\mu_t}{\EE} \big[\|g_{\xs}(z)\|^{2}\big]  \leq N_{\delta}^{2} + \delta.
  \end{equation}
  On the other hand, for all $t$, we also have the decomposition
	\begin{align}\label{eq:cov_decomp}
		\begin{split}
			\underset{z\sim \mu_t}{\Expect}\big[g_{x_{t}}(z)g_{x_{t}}(z)^\top\big]
			&= \underset{z\sim\mu_t}{\Expect}\big[g_{\xs}(z)g_{\xs}(z)^{\top}\big] + \underset{z\sim\mu_t}{\Expect}\big[g_{\xs}(z)\big(g_{x_t}(z)-g_{\xs}(z)\big)^\top\big] \\
			&~~~~~+ \underset{z\sim\mu_t}{\Expect}\big[\big(g_{x_t}(z) - g_{\xs}(z)\big)g_{x_t}(z)^\top\big].
		\end{split}
	\end{align}
  The last two summands in \eqref{eq:cov_decomp} tend to zero as $t\to\infty$. Indeed, since $x_t\to\xs$ as $t\to\infty$, we have $x_t\in\cV$ for all but finitely many $t$ and so we may apply condition \ref{covlem:cond2} together with H\"older's inequality and \eqref{eq:unifM1} to conclude
	\begin{align*}
    \left\|\underset{z\sim\mu_t}{\Expect}\big[g_{\xs}(z)\big(g_{x_t}(z) - g_{\xs}(z)\big)^\top\big]\right\|_\text{op}
    &\leq \underset{z\sim\mu_t}{\Expect}\big[\|g_{\xs}(z)\|\cdot\|g_{x_t}(z) - g_{\xs}(z)\|\big] \\
    &\leq \|x_t - \xs\|\sqrt{\underset{z\sim\mu_t}{\Expect}\big[\|g_{\xs}(z)\|^2\big]\cdot\underset{z\sim\mu_t}{\Expect}\big[L(z)^2\big]}\\	
    &\to 0 \qquad\text{as~}t\to\infty
    \end{align*}
	and
	\begin{align*}
    &\left\|\underset{z\sim\mu_t}{\Expect}\big[\big(g_{x_t}(z) - g_{\xs}(z)\big)g_{x_t}(z)^\top\big]\right\|_\text{op} \\
    &\hspace{1in}\leq \underset{z\sim\mu_t}{\Expect}\big[\|g_{x_t}(z) - g_{\xs}(z)\|\cdot\|g_{x_t}(z)\|\big] \\
    &\hspace{1in}\leq \|x_t - \xs\|\sqrt{\underset{z\sim\mu_t}{\Expect}\big[L(z)^2\big]\cdot\underset{z\sim\mu_t}{\Expect}\big[\|g_{x_t}(z)\|^2\big]} \\
    &\hspace{1in}\leq \|x_t - \xs\|\sqrt{2\underset{z\sim\mu_t}{\Expect}\big[L(z)^2\big]\Big(\underset{z\sim\mu_t}{\Expect}\big[\|g_{\xs}(z)\|^2\big] + \underset{z\sim\mu_t}{\Expect}\big[\|g_{x_t}(z) - g_{\xs}(z)\|^2\big]\Big)} \\
    &\hspace{1in}\leq \|x_t - \xs\|\sqrt{2\underset{z\sim\mu_t}{\Expect}\big[L(z)^2\big]\Big(\underset{z\sim\mu_t}{\Expect}\big[\|g_{\xs}(z)\|^2\big] + \|x_t - \xs\|^2\cdot\underset{z\sim\mu_t}{\Expect}\big[L(z)^2\big]\Big)}\\
    &\hspace{1in}\to 0 \qquad\text{as~}t\to\infty.
	\end{align*}

  To complete the proof, it now suffices by \eqref{eq:cov_decomp} to show ${\Expect}_{z\sim\mu_t}[g_{\xs}(z)g_{\xs}(z)^{\top}] \rightarrow \Sigma$ as $t \to \infty$. To this end, define for each $q\in\RR$ the step-like function $\varphi_{q}\colon \RR \rightarrow \RR$ by setting
  $$
  \varphi_{q}(x) = \begin{cases} 1 & \text{if }x \leq q, \\ - x + q + 1 & \text{if } q \leq x \leq q+1, \\ 0 &\text{if }q + 1\leq x.\end{cases}
  $$
  Let $\delta > 0$ be arbitrary. Then for any given $t$, we have the decomposition
  \begin{align*}
    \underset{z\sim\mu_t}{\Expect}\big[&g_{\xs}(z)g_{\xs}(z)^{\top}\big] - \Sigma \\ & = \underset{z\sim\mu_t}{\Expect}\big[g_{\xs}(z)g_{\xs}(z)^\top\big] -	\underset{z\sim\mu^{\star}}{\Expect}\big[g_{\xs}(z)g_{\xs}(z)^\top\big] \\ & = \underbrace{\underset{z\sim\mu_t}{\Expect}\big[\big(1-\varphi_{N_{\delta}}(\|g_{\xs}(z)\|)\big)g_{\xs}(z)g_{\xs}(z)^\top\big] -\underset{z\sim\mu^{\star}}{\Expect}\big[\big(1-\varphi_{N_{\delta}}(\|g_{\xs}(z)\|)\big)g_{\xs}(z)g_{\xs}(z)^\top\big]}_{A_t} \\
        &  \hspace{1.1cm} + \underbrace{\underset{z\sim\mu_t}{\Expect}\big[\varphi_{N_{\delta}}(\|g_{\xs}(z)\|)g_{\xs}(z)g_{\xs}(z)^\top\big] -\underset{z\sim\mu^{\star}}{\Expect}\big[\varphi_{N_{\delta}}(\|g_{\xs}(z)\|)g_{\xs}(z)g_{\xs}(z)^\top\big]}_{B_t}\!.
  \end{align*}
  By the triangle inequality, $\| A_t \|_\text{op}$ is bounded above by
  \begin{align*}
    &\Big\| \underset{ z\sim\mu_t}{\Expect}\big[\big(1-\varphi_{N_{\delta}}(\|g_{\xs}(z)\|)\big)g_{\xs}(z)g_{\xs}(z)^\top\big]\Big\|_\text{op} \!+ \Big\|\underset{z\sim\mu^{\star}}{\Expect}\big[\big(1-\varphi_{N_{\delta}}(\|g_{\xs}(z)\|)\big)g_{\xs}(z)g_{\xs}(z)^\top\big]\Big\|_\text{op} \\
        & \hspace{1.1cm} \leq  \underset{z\sim\mu_t}{\Expect}\big[\|g_{\xs}(z)\|^{2}\mathbf{1}_{\smash{\{\|g_{\xs}(z)\| \geq N_{\delta}\}}}\big] + \underset{z\sim\mu^{\star}}{\Expect}\big[\|g_{\xs}(z)\|^{2}\mathbf{1}_{\smash{\{\|g_{\xs}(z)\| \geq N_{\delta}\}}}\big],
  \end{align*}
  so
  $$\limsup_{t\to\infty} \| A_t \|_\text{op}\leq 2\delta.$$
  In order to bound $B_t$, consider the map $\Phi\colon \R^{n} \rightarrow \R^{n \times n}$ given by $\Phi(w) = \varphi_{N_{\delta}}(\|w\|) ww^{\top}$, set $\phi = \Phi \circ g_{\xs}$, and note
  $$ B_t = \underset{z\sim\mu_t}{\EE}{\big[\phi(z)\big]} - \underset{z\sim\mu^{\star}}{\EE}\big[\phi(z)\big].$$
  Clearly $\Phi$ is Lipschitz continuous on any compact set and zero outside of the ball of radius $N_{\delta} + 1$ centered at the origin. Therefore $\Phi$ is globally Lipschitz. Since $g_{\xs}$ is $\beta$-Lipschitz on $\cZ$ by condition \ref{covlem:cond2}, 
  we conclude that $\phi$ is Lipschitz on $\cZ$ with a constant $C$ that depends only on $N_{\delta}$ and $\beta$.
Consequently, 
  \begin{align*}
    \|B_t\|_{\text{op}} &= \Big\| \underset{z\sim\mu_t}{\Expect}\big[\phi(z)\big] - \underset{z\sim\mu^{\star}}{\Expect}\big[\phi(z)\big] \Big\|_{\text{op}} \\
    &=\sup_{\|u\|,\|v\|\leq1}\Big\{\underset{z\sim\mu_t}{\Expect}\big[\langle \phi(z)u, v \rangle\big] - \underset{z\sim\mu^{\star}}{\Expect}\big[ \langle \phi(z)u, v \rangle \big]\Big\}\\
    &\leq C\cdot W_1(\mu_t,\mu^{\star})\to 0 \qquad\text{as~}t\to\infty,
  \end{align*}
	where the inequality follows from the $C$-Lipschitz continuity of the function  $z\mapsto \langle \phi(z)u, v \rangle$.
  Hence $$\limsup_{t\rightarrow \infty}\Big\| \underset{z\sim\mu_t}{\Expect} \big[g_{\xs}(z)g_{\xs}(z)^{\top}\big] - \Sigma \Big\|_\text{op} \leq \limsup_{t\rightarrow \infty} \big(\|A_t\|_\text{op} + \|B_t\|_\text{op} \big) \leq 2\delta.$$
  Since $\delta>0$ is arbitrary, we deduce ${\Expect}_{z\sim\mu_t}[g_{\xs}(z)g_{\xs}(z)^{\top}] \rightarrow \Sigma$ as $t\to\infty$.
\end{proof}

Finally, we record two basic lemmas about products and quotients of Lipschitz functions.
\begin{lemma}\label{lem:lip-prod}
  Let $\mathcal{K}$ be a metric space and suppose that $f\colon \mathcal{K} \rightarrow \R^{n\times q}$ and $g\colon \mathcal{K} \rightarrow \R^{q\times m}$ are bounded and Lipschitz. Then the product $f g \colon \mathcal{K} \rightarrow \R^{n\times m}$ is Lipschitz.
\end{lemma}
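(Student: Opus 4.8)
The plan is to apply the standard ``add and subtract'' trick together with submultiplicativity of the operator norm. Since $f$ and $g$ are bounded, set $M_f := \sup_{x\in\mathcal{K}}\|f(x)\|_{\rm op}$ and $M_g := \sup_{x\in\mathcal{K}}\|g(x)\|_{\rm op}$, both finite. Let $L_f$ and $L_g$ denote Lipschitz constants of $f$ and $g$ with respect to the operator norm on the respective matrix spaces (all matrix norms being equivalent, the choice of norm only affects the constants).

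The key computation is, for arbitrary $x,y\in\mathcal{K}$,
\begin{align*}
\|f(x)g(x)-f(y)g(y)\|_{\rm op}
&\leq \|f(x)g(x)-f(x)g(y)\|_{\rm op}+\|f(x)g(y)-f(y)g(y)\|_{\rm op}\\
&\leq \|f(x)\|_{\rm op}\,\|g(x)-g(y)\|_{\rm op}+\|f(x)-f(y)\|_{\rm op}\,\|g(y)\|_{\rm op}\\
&\leq \big(M_f L_g+M_g L_f\big)\,d_{\mathcal{K}}(x,y),
\end{align*}
where the second line uses submultiplicativity of the operator norm and the third uses the boundedness and Lipschitz hypotheses. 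Hence $fg$ is Lipschitz with constant $M_fL_g+M_gL_f$.

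I do not anticipate any genuine obstacle here; the only thing to be careful about is a remark that boundedness is genuinely needed (a product of Lipschitz but unbounded functions need not be Lipschitz, e.g.\ $t\mapsto t$ on $\RR$), which explains why the hypothesis is stated. The result will be invoked in its obvious extension to the quotient case (Lemma~\ref{lem:lip-quo}) where one additionally controls $1/C$ away from zero; but for the product statement itself, the three-line estimate above is the entire proof.
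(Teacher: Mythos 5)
Your proof is correct and matches the paper's argument essentially line for line: the same add-and-subtract decomposition, submultiplicativity of the operator norm, and the Lipschitz constant $M_fL_g + M_gL_f$ obtained from the boundedness and Lipschitz hypotheses. Nothing further is needed.
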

\begin{proof}
  Let $L_{f}$ and $ L_{g}$ be the Lipschitz constants of $f$ and $g$ with respect to the operator norm $\|\cdot\|$. Then for all $x, y \in \mathcal{K}$, we have
  \begin{align*}
    \|f(x) g(x) - f(y)g(y)\| &\leq \| f(x)(g(x) - g(y))\| + \|(f(x) - f(y))g(y)\| \\
                             &\leq \sup_{z \in \mathcal{K}} \| f(z)\| \cdot \|g(x) - g(y)\| +  \| f(x) - f(y)\|\cdot \sup_{z\in \mathcal{K}} \|g(z)\| \\
                             &\leq \left(L_{g} \cdot\sup_{z \in \mathcal{K}} \|f(z)\| + L_{f}\cdot \sup_{z\in \mathcal{K}}\|g(z)\| \right) \cdot d_{\mathcal{K}}(x,y).
  \end{align*}
  Since $f$ and $g$ are bounded, this demonstrates that $fg$ is Lipschitz.
\end{proof}

\begin{lemma}\label{lem:lip-quo}
  Let $\mathcal{K} \subset \RR^{m}$ be a compact set and suppose that $f \colon \mathcal{K} \rightarrow \RR^{n}$ and $g \colon \mathcal{K} \rightarrow\RR\setminus\{0\}$ are $C^{1}$-smooth with Lipschitz Jacobians. Then $f/g$ is $C^{1}$-smooth with Lipschitz Jacobian.
\end{lemma}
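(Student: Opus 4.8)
The plan is to bootstrap from Lemma~\ref{lem:lip-prod} together with the elementary fact that, on a compact set, continuity yields boundedness. First, recall that ``$C^1$-smooth on $\mathcal{K}$'' means the map extends $C^1$-smoothly with Lipschitz Jacobian to an open neighborhood of $\mathcal{K}$; hence, after intersecting that neighborhood with a closed ball, we may and do assume that $\mathcal{K}$ is a compact \emph{convex} set. With this reduction, any map on $\mathcal{K}$ with a bounded Jacobian is automatically Lipschitz (by the mean value inequality), a fact we use repeatedly.

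\emph{Step 1 (a product rule for this class).} I will first record that if $\phi\colon\mathcal{K}\to\R^{n\times q}$ and $\psi\colon\mathcal{K}\to\R^{q\times p}$ are $C^1$-smooth with Lipschitz Jacobians, then so is the product $\phi\psi$. Indeed, the product rule gives (entrywise) $\nabla(\phi\psi) = (\nabla\phi)\psi + \phi(\nabla\psi)$, so $\phi\psi$ is $C^1$-smooth, and it remains to see that each summand is Lipschitz. The map $\nabla\phi$ is bounded (being continuous on the compact set $\mathcal{K}$) and Lipschitz (by hypothesis), while $\psi$ is bounded (continuity) and Lipschitz (its Jacobian is bounded by continuity); hence Lemma~\ref{lem:lip-prod} shows $(\nabla\phi)\psi$ is Lipschitz, and symmetrically $\phi(\nabla\psi)$ is Lipschitz. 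A sum of Lipschitz maps is Lipschitz, completing the step. In particular $\phi\psi$ is again a bounded map with bounded Lipschitz Jacobian.

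\emph{Step 2 (the reciprocal $1/g$).} Since $\mathcal{K}$ is compact and $g$ is continuous and nowhere zero, there is a constant $c>0$ with $|g|\geq c$ on $\mathcal{K}$. By Step 1, $g^2$ is $C^1$-smooth with Lipschitz Jacobian, and it satisfies $c^2\leq g^2\leq\sup_{\mathcal{K}}g^2<\infty$. As $t\mapsto 1/t$ is Lipschitz on $[c^2,\infty)$, the composition $1/g^2$ is bounded by $c^{-2}$ and Lipschitz on $\mathcal{K}$. Consequently $1/g$ is $C^1$-smooth with $\nabla(1/g) = -\tfrac{1}{g^2}\,\nabla g$, which is a product of the bounded Lipschitz scalar map $1/g^2$ with the map $\nabla g$ (bounded by continuity, Lipschitz by hypothesis); Lemma~\ref{lem:lip-prod} then shows $\nabla(1/g)$ is Lipschitz. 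Thus $1/g$ is $C^1$-smooth with Lipschitz Jacobian (and it is itself bounded by $c^{-1}$).

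\emph{Step 3 (conclusion).} Writing $f/g = f\cdot(1/g)$, the desired claim follows by applying Step 1 with $\phi=f$ and $\psi=1/g$. I expect the only mild obstacle to be purely bookkeeping: keeping the matrix/vector shapes straight in the product rule and confirming that each factor entering Lemma~\ref{lem:lip-prod} is genuinely bounded and Lipschitz; the one genuine subtlety, addressed at the outset, is the passage from ``bounded Jacobian on a compact set'' to ``Lipschitz,'' which is why we first pass to a compact convex neighborhood using the definition of smoothness on a set.
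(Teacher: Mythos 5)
Your overall route is the same as the paper's: differentiate the quotient via the product/quotient rule, observe that every factor appearing in the resulting Jacobian is bounded and Lipschitz on the compact set, and invoke Lemma~\ref{lem:lip-prod}. The one step that does not survive scrutiny is the opening reduction to a compact \emph{convex} $\mathcal{K}$. Intersecting the extension neighborhood with a closed ball produces a compact set but not a convex one, and you cannot instead pass to the convex hull of $\mathcal{K}$: the hull need not be contained in the neighborhood on which the extension exists (take $\mathcal{K}$ to be a circle and the extension defined only on a thin annulus), the extension of $g$ may vanish off $\mathcal{K}$, and the Lipschitz-Jacobian hypothesis is only given on $\mathcal{K}$ itself --- indeed, the paper's notion of smoothness on a set only provides a differentiable extension to a neighborhood, not one with a Lipschitz Jacobian there, so enlarging the set is not licensed by the hypotheses. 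Since you then ``use repeatedly'' the implication (bounded Jacobian on $\mathcal{K}$) $\Rightarrow$ (Lipschitz on $\mathcal{K}$) via the mean value inequality, which genuinely needs convexity or some substitute, the proof as written has a gap.

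The gap is easy to close, and the fix is exactly what the paper does: a $C^1$ map (through its extension) is \emph{locally} Lipschitz by the mean value inequality applied on small convex neighborhoods, and a locally Lipschitz, bounded function on a compact set is globally Lipschitz (cover $\mathcal{K}$ by finitely many balls on which it is Lipschitz; for $\|x-y\|$ below the Lebesgue number use the local constant, otherwise bound $\|F(x)-F(y)\|$ by $2\sup\|F\|$ divided by that number times $\|x-y\|$). Replacing your convexity reduction with this observation, your Steps 1--3 (product rule for the class, Lipschitzness of $1/g$ and $1/g^2$ using $|g|\geq c>0$, and $f/g=f\cdot(1/g)$) go through and amount to a mild repackaging of the paper's proof, which writes $\nabla(f/g)=(1/g)\nabla f-(f/g^2)(\nabla g)^{\top}$ directly and applies Lemma~\ref{lem:lip-prod} to each term.
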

\begin{proof}
  Since $f$ and $g$ are $C^1$-smooth, it follows immediately from the quotient rule that $f/g$ is $C^1$-smooth with Jacobian given by
  \begin{equation}\label{eq:quotient-grad}
  	 \nabla(f/g)= (1/g)(\nabla f) - (f/g^2)(\nabla g)^{\top}.
  \end{equation}
  By assumption, $\nabla f$ and $\nabla g$ are Lipschitz, and they are bounded by the compactness of $\cK$. Further, the functions $1/g$ and $f/g^2$ are $C^1$-smooth, so they are locally Lipschitz by the mean value theorem; hence $1/g$ and $f/g^2$ are Lipschitz and bounded by the compactness of $\mathcal{K}$. Thus, \eqref{eq:quotient-grad} and Lemma~\ref{lem:lip-prod} show that $\nabla(f/g)$ is the difference of two Lipschitz maps. Therefore $\nabla(f/g)$ is Lipschitz.
\end{proof}

\bibliography{biblio.bib}

\end{document}